\newtheorem{theorem}{Theorem}
\newtheorem{property}{Property}
\newtheorem{definition}{Definition}
\newtheorem{lemma}{Lemma}
\newtheorem{proposition}{Proposition}
\newtheorem{remark}{Remark}
\newtheorem{example}{Example}
\begin{document}
%
\title{Certified Roundoff Error Bounds using Bernstein Expansions and Sparse Krivine-Stengle Representations}
%

\author{Victor Magron,~
        Alexandre Rocca,~
        and~Thao Dang
\IEEEcompsocitemizethanks{
\IEEEcompsocthanksitem All authors are affiliated to UGA, VERIMAG/CNRS, 700 avenue centrale 38400 Saint Martin D'H{\`e}res, France.\protect\\
contact E-mail: forename.surname@univ-grenoble-alpes.fr
\IEEEcompsocthanksitem A.~Rocca is also affiliated to UGA-Grenoble 1/CNRS, TIMC-IMAG, UMR 5525, Grenoble, F-38041, France.
\IEEEcompsocthanksitem The first author has been partially supported by the LabEx PERSYVAL-Lab (ANR-11-LABX-0025-01) funded by the French program ``Investissement d'avenir'' and by the European Research Council (ERC) ``STATOR'' Grant Agreement nr. 306595. The third author has been partially supported by the ANR MALTHY project (grant ANR-12-INSE-003).
}
}

\IEEEtitleabstractindextext{
\begin{abstract}
Floating point error is a drawback of embedded systems implementation that is difficult to avoid. Computing rigorous upper bounds of roundoff errors is absolutely necessary for the validation of critical software. This problem of computing rigorous upper bounds is even more challenging when addressing non-linear programs. In this paper, we propose and compare two new algorithms based on Bernstein expansions and sparse Krivine-Stengle representations, adapted from the field of the global optimization, to compute upper bounds of roundoff errors for programs implementing polynomial \new{and rational} functions. We also provide the convergence rate of these two algorithms.
We release two related software package $\fpbern$ and $\fpkristen$, and compare them with the state-of-the-art tools.
We show that these two methods achieve competitive performance, while providing accurate upper bounds by comparison with the other tools.
\end{abstract}
\begin{IEEEkeywords}
Polynomial Optimization; Floating Point Arithmetic; Roundoff Error Bounds; Linear Programming Relaxations; Bernstein Expansions; Krivine-Stengle Representations
\end{IEEEkeywords}}
\maketitle

\IEEEdisplaynontitleabstractindextext

%
%
\IEEEpeerreviewmaketitle
\IEEEraisesectionheading{\section{Introduction}\label{introduction_intro}}

\IEEEPARstart{T}heoretical models, algorithms, and programs are often analyzed and designed in real algebra. However, their implementation on computers often uses floating point algebra: this conversion from real numbers and their operations to floating point is not without errors. Indeed, due to finite memory and binary encoding in computers, real numbers cannot be exactly represented by floating point numbers. Moreover, numerous properties of the real algebra are not preserved such as associativity.

The consequences of such imprecisions become particularly significant in safety-critical systems, especially in embedded systems which often include control components implemented as computer programs. When implementing an algorithm designed in real algebra, and initially tested on computers with single or double floating point precision, one would like to ensure that the roundoff error is not too large on more limited platforms (small processor, low memory capacity) by computing their accurate upper bounds.

For programs implementing linear functions, SAT/SMT solvers as well as affine arithmetic are efficient tools to obtain good upper bounds. When extending to programs with non-linear polynomial \new{or rational} functions, the problem of determining a precise upper bound becomes substantially more difficult, since polynomial optimization problems are in general NP-hard~\cite{laurent2009sums}. We can cite at least three closely related and recent frameworks designed to provide upper bounds of roundoff errors for non-linear programs. \texttt{FPTaylor} \cite{fptaylor} is a tool based on Taylor-interval methods, while \texttt{Rosa} \cite{rosa} combines SMT with interval arithmetic.  \texttt{Real2Float} \cite{real2float} relies on Putinar representations of positive polynomials while exploiting sparsity in a similar way as the second method that we propose in this paper. 

The contributions of this paper are two methods, coming from the field of polynomial optimization, to compute upper bounds on roundoff errors of \new{programs involving polynomial or rational functions}.  The first method is based on Bernstein expansions of polynomials, while the second relies on sparse Krivine-Stengle certificates for positive polynomials. 
In practice, these methods (presented in Section~\ref{contributions}) provide accurate bounds at a reasonable computational cost. 
Indeed, the size of the Bernstein expansions used in the first method as well as the size of the LP relaxation problems considered in the second method are both linear w.r.t.~the number of roundoff error variables. 
\subsection{Overview}
\label{overview}
Before explaining in detail each method, let us first illustrate the addressed problem on an example. Let $f$ be the degree two polynomial defined by:
\[ 
f(x) := x^2 - x~,\quad \forall x\in {X} = [0,1].
\]
When approximating the value of $f$ at a given real number $x$, one actually computes the floating point result $\hat{f} = \hat{x}\otimes\hat{x} \ominus \hat{x}$, with all the real operators $+$,$-$,$\times$ being substituted by their associated floating point operators $\oplus$, $\ominus$, $\otimes$, and $x$ being represented by the floating point number $\hat{x}$ (see Section \ref{preliminaries_floating_point} for more details on floating point arithmetics). A simple rounding model consists of introducing an error term $e_i$ for each floating point operation, as well as for each floating point variable. For instance, $\hat{x}\otimes\hat{x}$ corresponds to $((1+e_1) \, x \,  (1+e_1) \, x) \, (1+e_2)$, where $e_1$ is the error term between $x$ and $\hat{x}$, and $e_2$ is the one associated to the operation $\otimes$.
Let $\eb$ be the vector of all error terms $e_i$. Given $e_i \in [ - \varepsilon ,\varepsilon ]$ for all $i$, with $ \varepsilon $ being the machine precision, we can write the floating point approximation $\hat{f}$ of $f$ as follows:
\[ 
\hat{f}(x,\eb) = (((1+e_1)x  (1+e_1) x)  (1+e_2) - x (1+e_1)) (1+e_3).
\]
Then, the absolute roundoff error is defined by:
\[
r(x,\eb) := \max\limits_{\begin{subarray}{1}x\in [0,1]\\ \eb\in[ - \varepsilon ,\varepsilon ]^3\end{subarray}}(|\hat{f}(x,\eb) - f(x)|)~~.
\]
However, we can make this computation easier with a slight approximation: $|\hat{f}(x,\eb)-f(x)| \leq |l(x,\eb)| + |h(x,\eb)|$ with $l(x,\eb)$ being the sum of the terms of $(\hat{f}(x,\eb)-f(x))$ which are linear in $\eb$, and $h(x,\eb)$ the sum of the terms which are non-linear in $\eb$. The term $|h(x,\eb)|$ can then be over-approximated by $O(|\eb|^2)$ which is \emph{in general} negligible compared to $|l(x,\eb)|$, and can be bounded using standard interval arithmetic. For this reason, we focus on computing an upper bound of $|l(x,\eb)|$. In the context of our example, $l(x,\eb)$ is given by:
\begin{align}
\label{eq:l}
l(x,\eb) = (2x^2-x) e_1 + x^2 e_2 + (x^2-x) e_3.
\end{align} 
We divide each error term $e_j$ by $\varepsilon$, and then consider the (scaled) linear part $l':=\frac{l}{\varepsilon}$ of the roundoff error with the error terms $\eb\in [-1,1]^3$.
For all $x\in [0,1]$, and $\eb\in [-1,1]^3$, one can easily compute a valid upper bound of $|l'(x,\eb)|$ with interval arithmetic. Using the same notation for elementary operations $+,-,\times$ in interval arithmetic, \new{one has $l'(x,\eb) \in ([-0.125,1]\times[-1,1]+[0,1]\times[-1,1]+[-0.25,0]\times[-1,1])=[-2.25,2.25]$,} yielding $|l(x,\eb)|\leq 2.25\varepsilon$.\\
Using the first method based on Bernstein expansions detailed in Section \ref{contributions_bernstein}, we obtained $2\varepsilon$ as an upper bound of $|l(x,\eb)|$ after $0.23$s of computation using \texttt{FPBern(b)} a rational arithmetic implementation. With the second method based on sparse Krivine-Stengle representation detailed in Section \ref{contributions_handelman}, we also obtained an upper bound of $2\varepsilon$ in $0.03$s.\\
 \new{Although on this particular example, the method based on sparse Krivine-Stengle representations appears to be more time-efficient,  in general the computational cost of the method based on Bernstein  expansions is lower. For this example, the bounds provided by both methods are tighter than the ones determined by interval arithmetic.}
We emphasize the fact that the bounds provided by our two methods can be certified. Indeed, in the first case, the Bernstein coefficients (see Sections \ref{preliminaries_bernstein} and \ref{contributions_bernstein}) can be computed either with rational arithmetic or certified interval arithmetic to ensure guaranteed values of upper bounds. In the second case, the nonnegativity certificates are directly provided by sparse Krivine-Stengle representations.

\subsection{Related Works}
\label{related_works}
We first mention two tools, based on positivity certificates, to compute roundoff error bounds. The first tool, related to~\cite{Constantinides}, relies on an approach  similar to our second method. It uses dense Krivine-Stengle representations of positive polynomials to cast the initial problem as a finite dimensional LP problem. To reduce the size of this possibly large LP, \cite{Constantinides} provides heuristics to eliminate some variables and constraints in the dense representation. However, this approach has the main drawback of loosing the property of convergence toward optimal solutions of the initial problem.
Our second method uses sparse representations and is based on the previous works~\cite{schweighofer06} and~\cite{sbsos}, allowing to ensure the convergence towards optimal solutions while greatly reducing the computational cost of LP problems. Another tool, \texttt{Real2Float} \cite{real2float}, exploits sparsity in the same way while using Putinar representations of positive polynomials, leading to solving semidefinite (SDP) problems. Bounds provided by such SDP relaxations are in general more precise than LP relaxations~\cite{Lasserre2009Moments}, but their solving cost is higher.\newline
Several other tools are available to compute floating point roundoff errors. 
SMT solvers are efficient when handling linear programs, but often provide coarse bounds for non-linear programs, e.g.~when the analysis is done in isolation~\cite{rosa}. The \texttt{Rosa}~\cite{rosa} tool is a solver mixing SMT and interval arithmetic which can compile functional \texttt{SCALA} programs implementing non-linear functions (involving $/,\surd$ operations and polynomials) as well as conditional statements. SMT solvers are theoretically able to output certificates which can be validated externally afterwards. 
\texttt{FPTaylor} tool \cite{fptaylor} relies on \emph{Symbolic Taylor expansion} method, which consists of a branch and bound algorithm based on interval arithmetic. 
Bernstein expansions have been extensively used to handle systems of polynomial equations and inequalities, as well as polynomial optimization (see for example \cite{mourrain2009,smithThesis,dreossiHSCC,munoz13}). 
\new{In~\cite{NGSM12}, the authors provide a method to extend the range of Bernstein expansions to handle the case of rational function over a box. This approach consists of expanding both numerators and denominators.}
Yet, to the best of our knowledge, there is no tool based on Bernstein expansions in the context of roundoff error computation.
The \texttt{Gappa} tool provides certified bounds with elaborated interval arithmetic procedure relying on multiple-precision dyadic fractions. 
The static analysis tool \texttt{FLUCTUAT} \cite{fluctuat}  performs forward computation (by contrast with optimization) to analyze floating point \texttt{C} programs. Both \texttt{FLUCTUAT} and \texttt{Gappa} use a different rounding model (see Section \ref{preliminaries_floating_point}), also available in \texttt{FPTaylor}, that we do not handle in our current implementation.
Some tools also allow formal validation of certified bounds. \texttt{FPTaylor}, \texttt{Real2Float} \cite{real2float}, as well as \texttt{Gappa} \cite{gappa} provide formal proof certificates, with \texttt{HOL-Light} \cite{hollight} for the first case, and \texttt{Coq} \cite{CoqProofAssistant} for the two other ones. 

\subsection{Key Contributions}
\label{key_contributions}
Here is a summary of our key contributions:
\begin{itemize}
\if{
OLD
\item[$\blacktriangleright$] We present two new methods to compute upper bounds of floating point roundoff errors for programs implementing multivariate polynomial functions with input variables constrained to boxes. The first one is based on Bernstein expansions and the second one relies on sparse Krivine-Stengle representations.
We also propose a theoretical framework to guarantee the validity of upper bounds computed with both methods (see Section~\ref{contributions}). In addition, we give an alternative shorter proof in Section \ref{preliminaries_handelman} for the existence of Krivine-Stengle representations for sparse positive polynomials \new{(proof of Theorem~\ref{spkrivrep_th})}. 
}\fi
\item[$\blacktriangleright$] \new{We present two new algorithms to compute upper bounds of floating point roundoff errors for programs involving multivariate polynomials. The first algorithm is based on Bernstein expansions and handle programs implementing rational functions with box constrained input sets. The second algorithm relies on sparse Krivine-Stengle representations and handles programs implementing polynomial functions with input sets defined as conjunctions of finitely many polynomial inequalities.}
We also propose a theoretical framework to guarantee the validity of upper bounds computed with both algorithms (see Section~\ref{contributions}). In addition, we give an alternative shorter proof in Section \ref{preliminaries_handelman} for the existence of Krivine-Stengle representations for sparse positive polynomials \new{(proof of Theorem~\ref{spkrivrep_th}). We study in Section~\ref{complexity} the convergence rate of the two algorithms towards the maximal value of the linear part of the roundoff error.}

%

\item[$\blacktriangleright$] We release two software packages based on each algorithm. The first one, called \texttt{FPBern}\footnote{ \url{https://github.com/roccaa/FPBern} }, computes the bounds using Bernstein expansions, with two modules built on top of the \new{\texttt{C++} }software related to \cite{dreossiHSCC}: \texttt{FPBern(a)} is a module using double precision floating point arithmetic while \new{the second module \texttt{FPBern(b)} uses }rational arithmetic. 
The second one \texttt{FPKriSten}\footnote{ \url{https://github.com/roccaa/FPKriSten} } computes the bounds using Krivine-Stengle representations in \texttt{Matlab}. \texttt{FPKriSten} is built on top of the implementation related to \cite{sbsos}.
\item[$\blacktriangleright$] We compare our two methods implemented in \texttt{FPBern} and \texttt{FPKriSten} to three state-of-the-art methods. Our new methods have precisions comparable to that of these tools (\texttt{Real2Float, Rosa, FPTaylor}). At the same time, \texttt{FPBern(a)} \new{and \texttt{FPBern(b)} show }an important time performance improvement, while \texttt{FPKriSten} has similar time performances compared with the other tools, yielding promising results. 
\end{itemize}
\new{This work is the follow-up of our previous contribution~\cite{arith24}. The main novelties, both theoretical and practical, are the following: in~\cite{arith24}, we could only handle polynomial programs with box input constraints. For $\fpbern$, the extension to rational functions relies on~\cite{NGSM12}. We brought major updates to the \texttt{C++} code of $\fpbern$~(b). For $\fpkristen$, an extension to semialgebraic input sets was already theoretically possible in~\cite{arith24} with the hierarchy of LP relaxations based on sparse Krivine-Stengle representations, and in this current version, we have updated Section~\ref{contributions_handelman} accordingly to handle this more general case. We have carefully implemented this extension in our software package $\fpkristen$ in order to not compromise efficiency. 
Additional $15$ benchmarks provided in Section~\ref{implementation} illustrate the abilities of both algorithms to tackle a wider range of numerical programs. Another novelty is the complexity analysis of the two algorithms in the case of polynomial programs with box constrained input sets. This study is inspired by the framework presented in~\cite{deKlerk10Error}, yielding error bounds in the context of polynomial optimization over the hypercube.
}

The rest of the paper is organized as follows: in Section \ref{preliminaries}, we give basic background on floating point arithmetic, Bernstein expansions and Krivine-Stengle representations. In Section \ref{contributions} we describe the main contributions, that is the computation of roundoff error bounds using Bernstein expansions and sparse Krivine-Stengle representations. Finally, in Section \ref{implementation} we compare the performance and precision of our two methods with the existing tools, and show the advantages of our tools.
\section{Preliminaries}
\label{preliminaries}
We first recall useful notation on multivariate calculus. 
For $\mathbf{x} = (x_1,\ldots, x_n)\in \mathbb{R}^n$ and the multi-index $\mathbf{\ab} = (\alpha_1,\ldots, \alpha_n)\in \mathbb{N}^n$, we denote by
 $\xb^{\ab}$ the product $\prod_{i = 1}^n x_i^{\alpha_i}$. We also define $|\ab| = |\alpha_1| + \ldots + |\alpha_n|$, $\bzero=(0,\ldots,0)$ and $\bone = (1,\ldots,1)$.\newline
The notation $\sum_{\ab}$ is the nested sum $\sum_{\alpha_1}\ldots\sum_{\alpha_n}$. Equivalently $\prod_{\ab}$ is equal to the nested product $\prod_{\alpha_1}\ldots\prod_{\alpha_n}$. \newline
Given another multi-index $\mathbf{d} = (d_1,\ldots, d_n)\in \mathbb{N}^{n}$, the inequality $\ab < \mathbf{d}$ (resp.~$\ab \leq \mathbf{d}$) means that the inequality holds for each sub-index: $\alpha_1 < d_1, \ldots, \alpha_n < d_n$ (resp.~$\alpha_1 \leq d_1, \ldots, \alpha_n \leq d_n$). Moreover, the binomial coefficient $\binom{\mathbf{d}}{\ab}$ is the product $\prod_{i = 1}^n \binom{d_i}{\alpha_i}$.\newline
Let $\mathbb{R}[\xb]$ be the vector space of multivariate polynomials.  Given $f\in\mathbb{R}[\xb]$,  we associate a {\em multi-degree} $\mathbf{d} = (d_1, \dots, d_n)$ to $f$, with each $d_i$ standing for the degree of $f$ with respect to the variable $x_i$. 
Then, we can write $f(\mathbf{x}) = \sum_{\gb\leq\mathbf{d}}a_{\gb} \xb^{\gb}$, with $a_{\gb}$ (also denoted by $(f)_{\gb}$) being the coefficients of $f$ in the monomial basis and each $\gb\in \mathbb{N}^n$ is a multi-index. The degree $d$ of $f$ is given by $d := \max_{\{\gb : a_{\gb}\neq 0\}} | \gb |$. As an example, if $f(x_1,x_2) = x_1^4x_2 +x_1^1x_2^3$ then $\mathbf{d} = (4,3)$ and $d = 5$. For the polynomial $l$ used in Section~\ref{overview}, one has $\mathbf{d} = (2,1,1,1)$ and $d = 3$.
%
\subsection{Floating Point Arithmetic}
\label{preliminaries_floating_point}
This section gives background on floating point arithmetic, inspired from material available in~\cite[Section 3]{fptaylor}. The IEEE754 standard \cite{IEEE754} defines a binary floating point number as a triple of significant, sign, and exponent (denoted by $sig,sgn,exp$) which represents the numerical value $(-1)^{sgn}\times sig \times 2^{exp}$. The standard describes 3 formats (32, 64, and 128 bits) which differ by the size of the significant and the exponent, as well as special values (such as NaN, the infinities).
Let $\mathbb{F}$ be the set of floating point numbers, the rounding operator is defined by the function $\textnormal{rnd}:\mathbb{R}\rightarrow\mathbb{F}$ which takes a real number and returns the closest floating point number rounded to the nearest, toward zero, or toward $\pm\infty$. A simple model of rounding is given by the following formula:
\[ \textnormal{rnd}(x) = x(1+e) + u,\]
with $|e|\leq\varepsilon$, $|u|\leq \mu$ and $eu=0$. The value $\varepsilon$ is the maximal relative error (given by the machine precision \cite{IEEE754}), and $\mu$ is the maximal absolute error for numbers very close to $0$. For example, in the single (32 bits) format, $\varepsilon$ is equal to $2^{-24}$ while $\mu$ equals $2^{-150}$. \new{\emph{In general} }$\mu$ is negligible compared to $\varepsilon$, thus we neglect terms depending on $u$ in the remainder of this paper.\newline
Given an operation op : $\mathbb{R}^n\rightarrow\mathbb{R}$, let $\textnormal{op}_{\textnormal{FP}}$ be the corresponding floating point operation. An operation is exactly rounded when
$\textnormal{op}_{\textnormal{FP}}(\xb) = \textnormal{rdn}(\textnormal{op}(\xb))$ for all $\xb \in \mathbb{R}^n$. \newline
In the IEEE754 standard the following operations are defined as exactly rounded: $+,-,\times,/,\surd,$ and the \texttt{fma} operation\footnote{The \texttt{fma} operator is defined by \texttt{fma}($x,y,z$)=$x \times y+z$.}. It follows that for these operations we have the continuation of the simple rounding model  $\textnormal{op}_{\textnormal{FP}}(\xb) =\textnormal{op}(\xb)(1+e)$. \newline
The previous rounding model is called ``simple" in contrast with more improved rounding model. \new{Given the function $\textnormal{pc}(x) = \max_{k\in\mathbb{Z}}\{2^k : 2^k < x\}$, then the improved rounding model is defined by:
$
\textnormal{op}_{\textnormal{FP}}(\xb) =\textnormal{op}(\xb) +  \textnormal{pc}(\textnormal{op}(\xb))
$, for all $\xb \in \mathbb{R}^n$.
As the function pc is piecewise constant, this rounding model needs design of algorithms based on successive subdivisions, which is not currently handled in our methods. In the tools \texttt{FLUCTUAT}\cite{fluctuat},} \texttt{Gappa}\cite{gappa}, and \texttt{FPTaylor} \cite{fptaylor}, combining branch and bound algorithms with interval arithmetic is adapted to roundoff error computation with such rounding model. 
\subsection{Bernstein Expansions of Polynomials}
\label{preliminaries_bernstein}
In this section we give background on the Bernstein expansion, which is important to understand the contribution detailed subsequently in Section \ref{contributions_bernstein}.
Given a multivariate polynomial $f \in \mathbb{R}[\xb]$, we recall how to compute a lower bound of $\underline{f} :=\min_{\xb\in \X}f(\xb)$ where $\X = [0, 1]^n$.
The next result can be retrieved in~\cite[Theorem 2]{berngarloff}:
\begin{theorem}[Multivariate Bernstein expansion]
Given a multivariate polynomial $f$ and $\kb\geq \db$ with $\db$ the multi-degree of $f$, then the Bernstein expansion of multi-degree $\kb$ of $f$ is given by:\\
\begin{equation}
f(\mathbf{x}) = \sum\limits_{\gb} f_{\gb} \xb^{\gb} = \sum\limits_{\ab\leq\kb} b_{\ab}^{(f)}\mathbf{B}_{\kb,\ab}(\mathbf{x}).
\end{equation}
where $b_{\ab}^{(f)}$ (also denoted by $b_{\ab}$ \new{when there is no confusion}) are the Bernstein coefficients (of multi-degree $\kb$) of $f$, and $\Bb_{\kb,\ab}(\xb)$ are the Bernstein basis polynomials defined by $\Bb_{\kb,\ab}(\xb) := \prod_{i = 1}^n B_{k_i,\alpha_i}(x_i)$ and $B_{k_i,\alpha_i}(x_i) := \binom{k_i} {\alpha_i} x_i^{\alpha_i}(1-x_i)^{k_i - \alpha_i}$. The Bernstein coefficients of $f$ are given as follows:
\begin{equation}
b_{\ab} = \sum_{\bb < \ab}\frac{ \binom{\ab} {\bb} }{ \binom{\kb} {\bb} } f_{\bb}, \quad \bzero\leq \ab \leq \kb.
\end{equation}
\end{theorem}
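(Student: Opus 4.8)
The plan is to reduce the multivariate statement to the univariate one and then tensorize.

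\emph{Univariate step.} Assume first $n=1$ and write $f(x)=\sum_{j=0}^{d} f_j x^j$ with $d\le k$. The core is a closed form for a single monomial in the degree-$k$ Bernstein basis. Starting from the identity $x^j = x^j\,\bigl(x+(1-x)\bigr)^{k-j}$ and expanding the right-hand side with the binomial theorem gives
\begin{equation*}
x^j \;=\; \sum_{\alpha=j}^{k}\binom{k-j}{\alpha-j}\,x^{\alpha}(1-x)^{k-\alpha}
\;=\; \sum_{\alpha=j}^{k}\frac{\binom{k-j}{\alpha-j}}{\binom{k}{\alpha}}\,B_{k,\alpha}(x),
\end{equation*}
since $B_{k,\alpha}(x)=\binom{k}{\alpha}x^{\alpha}(1-x)^{k-\alpha}$. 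A one-line computation with factorials shows $\binom{k-j}{\alpha-j}/\binom{k}{\alpha}=\binom{\alpha}{j}/\binom{k}{j}$, and because $\binom{\alpha}{j}=0$ for $\alpha<j$ the range of summation may be harmlessly extended to $\alpha=0,\dots,k$. Substituting this expansion into $f$ and swapping the two finite sums over $j$ and $\alpha$ yields $f(x)=\sum_{\alpha=0}^{k} b_\alpha B_{k,\alpha}(x)$ with $b_\alpha=\sum_{j=0}^{\alpha}\frac{\binom{\alpha}{j}}{\binom{k}{j}}f_j$, which is exactly the claimed formula when $n=1$.

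\emph{Multivariate step.} Write $f(\xb)=\sum_{\gb\le\db} f_\gb\,\xb^\gb$ with $\xb^\gb=\prod_{i=1}^{n}x_i^{\gamma_i}$, and apply the univariate identity to each factor $x_i^{\gamma_i}$ with respect to degree $k_i$. Expanding the resulting product of $n$ sums and using the multi-index conventions $\binom{\ab}{\gb}=\prod_{i}\binom{\alpha_i}{\gamma_i}$, $\binom{\kb}{\gb}=\prod_{i}\binom{k_i}{\gamma_i}$, and $\Bb_{\kb,\ab}(\xb)=\prod_{i}B_{k_i,\alpha_i}(x_i)$, one obtains
\begin{equation*}
\xb^\gb \;=\; \sum_{\gb\le\ab\le\kb}\frac{\binom{\ab}{\gb}}{\binom{\kb}{\gb}}\,\Bb_{\kb,\ab}(\xb).
\end{equation*}
Plugging this back into $f$, extending the inner sum to all $\ab\le\kb$ (the new terms vanish because $\binom{\ab}{\gb}=0$ unless $\gb\le\ab$), and exchanging the two finite sums gives $f(\xb)=\sum_{\ab\le\kb} b^{(f)}_\ab\,\Bb_{\kb,\ab}(\xb)$ with $b^{(f)}_\ab=\sum_{\bb\le\ab}\frac{\binom{\ab}{\bb}}{\binom{\kb}{\bb}}f_\bb$, as required.

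I expect the only delicate point to be bookkeeping rather than anything analytic: keeping track of the multi-index ranges when the product of $n$ univariate sums is expanded, justifying the interchange of the (finite) summations, and checking the elementary identity $\binom{k-j}{\alpha-j}\binom{k}{j}=\binom{\alpha}{j}\binom{k}{\alpha}$. Once the univariate monomial expansion is in place, the multivariate case follows purely formally, so there is no real obstacle beyond careful notation.
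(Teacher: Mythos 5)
Your proof is correct, and it is the standard argument: expand each monomial $x_i^{\gamma_i}$ in the degree-$k_i$ Bernstein basis via $x^j = x^j(x+(1-x))^{k-j}$, convert the coefficient ratio with the identity $\binom{k-j}{\alpha-j}\binom{k}{j}=\binom{\alpha}{j}\binom{k}{\alpha}$, and tensorize. The paper itself offers no proof of this theorem --- it is recalled as~[Theorem~2] of the cited reference of Garloff --- so there is nothing to compare against beyond noting that your derivation is exactly the classical one. One point worth flagging: you correctly obtain $b_{\ab}=\sum_{\bb\leq\ab}\frac{\binom{\ab}{\bb}}{\binom{\kb}{\bb}}f_{\bb}$ with a \emph{non-strict} inequality, whereas the statement as printed in the paper writes $\sum_{\bb<\ab}$; with the paper's componentwise convention for $<$ that version would drop the term $\bb=\ab$ (and more) and is simply a typo --- your version is the correct one and matches the standard formula in the literature.
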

Bernstein expansions having numerous properties, we give only \new{four} of them which are useful for Section \ref{contributions_bernstein}. For a more exhaustive introduction to Bernstein expansions, as well as proofs of the basic properties, we refer the interested reader to \cite{smithThesis}.
\begin{property}[{Cardinality \cite[(3.14)]{smithThesis}}]
\label{card_prop}
The number of Bernstein coefficients in the Bernstein expansion of multi-degree $\kb$ is equal to $ (\kb+\bone)^{\bone} = \prod_{i = 1 }^n(k_i+1).$
\end{property}
\new{\begin{property}[{Linearity \cite[(3.2.3)]{smithThesis}}]
\label{lin_prop}
Given two polynomials $f_1$ and $f_2$, one has:
$$ b_{\ab}^{(c f_1+f_2)} = c b_{\ab}^{(f_1)} + b_{\ab}^{(f_2)} ~, \quad \forall c \in \R,$$
where the above Bernstein expansions are of the same multi-degree.
\end{property}}
%
%
\begin{property}[{Enclosure \cite[(3.2.4)]{smithThesis}}]
\label{enclosure_prop}
The minimum (resp.~maximum) of a polynomial $f$ over $[0, 1]^n$ can be lower bounded (resp.~upper bounded) by the minimum (resp.~maximum) of its Bernstein coefficients:
\new{$$ \min_{\ab \leq \kb} b_{\ab}  \leq f(\xb) \leq \max_{\ab \leq \kb} b_{\ab},~~ \forall \xb\in[0,1]^n .$$}
\end{property}
\begin{property}[{Sharpness \cite[(3.2.5)]{smithThesis}}]
\label{sharp_prop}
If the minimum (resp.~maximum) of the $b_{\ab}$ is reached at $\ab$ coinciding with a corner of the box $[0,k_1]\times\dots\times[0,k_n]$, then $b_{\ab}$ is the minimum (resp.~maximum) of $f$ over $[0,1]^n$.
\end{property}

Property \ref{card_prop} gives the maximal computational cost needed to find a lower bound of $\underline{f}$ for a Bernstein expansion of fixed multi-degree $\kb$. Property \ref{enclosure_prop} is used to bound from below optimal values, while Property \ref{sharp_prop} allows determining in some cases if the lower bound is optimal. 
{\subsection{Bounds of Rational Functions with Bernstein Expansions}
\label{preliminaries_bernstein_rational}
In this section we recall how to obtain bounds for the range of multivariate rational functions by using Bernstein expansions. The following result can be found in~\cite[Theorem~3.1]{NGSM12}.
\begin{theorem}
\label{th:bernstein_rational}
Let $f_1, f_2 \in \R[\xb]$ of respective multi-degrees $\db_1$ and $\db_2$. 
Given $\kb \geq \max \{\db_1, \db_2\}$, let us denote by $b_{\ab}^{(f_1)}$ and $b_{\ab}^{(f_2)}$ the Bernstein coefficients of multi-degree $\kb$ for $f_1$ and $f_2$, respectively. 
Let us assume that $f_2$ is positive over $[0, 1]^n$ and that all Bernstein coefficients $b_{\ab}^{(f_2)}$ are positive. Then, for $f := \frac{f_1}{f_2}$, one has
\begin{align}
\label{eq:bernstein_rational}
\min_{\ab \leq \kb} \frac{b_{\ab}^{(f_1)}}{b_{\ab}^{(f_2)}} \leq f(\xb) \leq \max_{\ab \leq \kb} \frac{b_{\ab}^{(f_1)}}{b_{\ab}^{(f_2)}} ,~~ \forall \xb\in[0,1]^n \,.
\end{align}
\end{theorem}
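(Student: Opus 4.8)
The plan is to reduce the bound on the rational function $f = f_1/f_2$ to the enclosure property (Property~\ref{enclosure_prop}) for ordinary polynomial Bernstein expansions. The starting point is that the Bernstein basis polynomials $\Bb_{\kb,\ab}$ of a fixed multi-degree $\kb$ form a partition of unity on $[0,1]^n$: since $B_{k_i,\alpha_i}(x_i) = \binom{k_i}{\alpha_i} x_i^{\alpha_i}(1-x_i)^{k_i-\alpha_i}$, the binomial theorem gives $\sum_{\alpha_i \leq k_i} B_{k_i,\alpha_i}(x_i) = (x_i + (1-x_i))^{k_i} = 1$, and taking the product over $i$ yields $\sum_{\ab \leq \kb} \Bb_{\kb,\ab}(\xb) = 1$. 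Moreover each $\Bb_{\kb,\ab}(\xb) \geq 0$ for $\xb \in [0,1]^n$. So for every $\xb \in [0,1]^n$ the vector $(\Bb_{\kb,\ab}(\xb))_{\ab \leq \kb}$ is a vector of nonnegative weights summing to $1$.

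First I would write, using the multi-degree-$\kb$ Bernstein expansions of $f_1$ and $f_2$,
\[
f_1(\xb) = \sum_{\ab \leq \kb} b_{\ab}^{(f_1)} \Bb_{\kb,\ab}(\xb), \qquad f_2(\xb) = \sum_{\ab \leq \kb} b_{\ab}^{(f_2)} \Bb_{\kb,\ab}(\xb).
\]
Fix $\xb \in [0,1]^n$ and set $M := \max_{\ab \leq \kb} b_{\ab}^{(f_1)}/b_{\ab}^{(f_2)}$. For each $\ab$ we have $b_{\ab}^{(f_1)} \leq M\, b_{\ab}^{(f_2)}$, where this inequality is preserved after multiplying by the positive quantity $b_{\ab}^{(f_2)}$ — here is where the hypothesis that all $b_{\ab}^{(f_2)}$ are positive is essential, both to make the ratios well-defined and to keep the direction of the inequality. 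Multiplying by $\Bb_{\kb,\ab}(\xb) \geq 0$ and summing over $\ab \leq \kb$ gives $f_1(\xb) \leq M f_2(\xb)$. Since $f_2$ is positive on $[0,1]^n$, dividing by $f_2(\xb) > 0$ yields $f(\xb) = f_1(\xb)/f_2(\xb) \leq M$. The lower bound follows symmetrically with $m := \min_{\ab \leq \kb} b_{\ab}^{(f_1)}/b_{\ab}^{(f_2)}$: from $m\, b_{\ab}^{(f_2)} \leq b_{\ab}^{(f_1)}$, multiply by $\Bb_{\kb,\ab}(\xb)$, sum, and divide by $f_2(\xb)$.

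There is no real obstacle here; the only points requiring care are the two places where positivity is used: the termwise inequality $b_{\ab}^{(f_1)} \leq M b_{\ab}^{(f_2)}$ relies on $b_{\ab}^{(f_2)} > 0$ (otherwise it could even flip), and the final division relies on $f_2(\xb) > 0$ on the whole box. Note that the positivity of the coefficients $b_{\ab}^{(f_2)}$ together with the partition-of-unity property already implies $f_2(\xb) > 0$ on $[0,1]^n$, so the two hypotheses are not independent, but I would keep both stated for clarity and to match the cited reference. I would also remark that this is the natural rational analogue of Property~\ref{enclosure_prop}: taking $f_2 \equiv 1$ (whose Bernstein coefficients of any multi-degree are all equal to $1$) recovers exactly the polynomial enclosure bounds.
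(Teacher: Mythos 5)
Your proof is correct. The paper itself gives no proof of Theorem~\ref{th:bernstein_rational} (it is recalled from~\cite[Theorem~3.1]{NGSM12}), and your argument — nonnegativity and partition of unity of the Bernstein basis, the termwise comparison $b_{\ab}^{(f_1)} \leq M\, b_{\ab}^{(f_2)}$ using positivity of the denominator's coefficients, then summation and division by $f_2(\xb)>0$ — is exactly the standard proof from that reference; your side remarks (that positivity of the $b_{\ab}^{(f_2)}$ already forces $f_2>0$ on the box, and that $f_2\equiv 1$ recovers Property~\ref{enclosure_prop}) are also accurate.
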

Note that to use the bounds from Theorem~\ref{th:bernstein_rational}, one first has to compute the Bernstein coefficients of $f_2$ for sufficiently large multi-degree $\kb$ in order to ensure that the denominators in~\eqref{eq:bernstein_rational} do not vanish. We emphasize that our assumption on the sign of $f_2$ is equivalent to suppose that $f_2(\xb) \neq 0$ for all $\xb \in [0, 1]^n$. 
\if{
We will recall later the linear convergence of Bernstein expansions w.r.t.~multi-degree elevation when approximating the range of rational functions (see e.g.~\cite[Theorem~5]{ GarloffBern16}).
\begin{theorem}
\label{th:cvgbern}

\end{theorem}
}\fi}
\subsection{Dense and Sparse Krivine-Stengle Representations}
\label{preliminaries_handelman}
In this section, we first give the necessary background on Krivine-Stengle representations, used in the context of polynomial optimization. Then, we present a sparse version based on \cite{schweighofer06}. These notions are applied later in Section \ref{contributions_handelman}.
\subsubsection{Dense Krivine-Stengle representations}
\label{sec:denseKS}
Krivine-Stengle certificates for positive polynomials can first be found in \cite{krivineanneaux,stengle} (see also \cite[Theorem 1(b)]{bsos}). Such certificates give representations of positive polynomials over a compact set $\K := \{\xb\in\mathbb{R}^n : 0\leq g_i(\xb)\leq 1,~i=1,\dots,p\}$, with $g_1, \dots, g_p \in \R[\x]$. 
\new{We denote $\dg = \max_i(\text{deg}(g_i))$.} 
\new{The compact set $\K$ is called a {\em basic semialgebraic set}, that is a set defined by a conjunction of finitely many polynomial inequalities. In the sequel, we assume that $\K \subseteq [0, 1]^n$ and that 
$x_i$ ($i=1,\dots,n)$ are among the polynomials $g_j$ in the  definition of $\K$.
This implies that the family $\{1,g_i\}_{i\leq p}$ generates $\R[\x]$ as an $\R$-algebra, which is a necessary assumption for Theorem~\ref{dense_stengle_th}.
}\\
Given $\ab = (\alpha_1,\dots,\alpha_p)$ and $\bb=(\beta_1,\dots,\beta_p)$, let us define the polynomial $h_{\ab,\bb}(\xb) = \g^{\ab} (\mathbf{1} - \g)^{\bb} = \prod_{i=1}^p g_i^{\alpha_i}(1-g_i)^{\beta_i}$. \newline
For instance on the two-dimensional unit box, one has $n=p=2$, $\cSet = [0,1]^2 = \{\xb\in\mathbb{R}^2 : 0\leq x_1\leq 1 \,, \ 0\leq x_2\leq 1\}$. With $\ab = (2,1)$ and $\bb = (1,3)$, one has $h_{\ab,\bb}(\xb) = x_1^2 x_2 (1 - x_1) (1 - x_2)^3$.
\begin{theorem}[Dense Krivine-Stengle representations]
\label{dense_stengle_th}
Let $\pfunc \in \R[\x]$ be a positive polynomial over $\K$. Then there exist $k \in \N$ and a finite number of nonnegative weights $\lambda_{\ab,\bb}\geq 0$ such that:
	\begin{equation}
	\label{eq:ks}
		\pfunc(\xb) = \sum_{|\ab+\bb|\leq k}\lambda_{\ab,\bb}h_{\ab,\bb}(\xb), \quad \forall\xb \in \mathbb{R}^n.
	\end{equation}
\end{theorem}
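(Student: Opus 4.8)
The plan is to derive the representation from Krivine's Positivstellensatz in its original form, which states: if $p$ is strictly positive on the compact set $\K = \{\xb : 0 \leq g_i(\xb) \leq 1\}$, and if the family $\{1, g_1, \dots, g_p\}$ generates $\R[\x]$ as an $\R$-algebra, then $p$ lies in the cone generated by the products $h_{\ab,\bb} = \prod_i g_i^{\alpha_i}(1-g_i)^{\beta_i}$. First I would check that the standing hypotheses of the section are exactly what is needed: since $\K \subseteq [0,1]^n$ and each coordinate function $x_i$ appears among the $g_j$, the algebra generated by $\{1, g_j\}$ contains all the coordinate functions, hence is all of $\R[\x]$. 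This is the algebra-generation condition, which the paragraph preceding the theorem has already flagged as the "necessary assumption."

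The main steps I would carry out are: (1) state Krivine's theorem in the form that gives, for $p$ positive on $\K$, a finite nonnegative combination $p = \sum_{\ab, \bb} \lambda_{\ab,\bb} h_{\ab,\bb}$ with the $\lambda_{\ab,\bb} \geq 0$; (2) observe that the sum is finite, so there is a bound $k$ with $|\ab + \bb| \leq k$ for all indices occurring with nonzero weight, which gives exactly the displayed equation \eqref{eq:ks}; (3) note that the identity, being a polynomial identity, holds for all $\xb \in \R^n$ and not merely on $\K$, which is what the statement claims. Alternatively — and this may be the cleaner route to present since the excerpt advertises "an alternative shorter proof" — I would reduce to the representation of polynomials positive on the simplex or the box via an affine/monomial change of variables: introduce new variables $y_i = g_i(\xb)$, so that positivity of $p$ on $\K$ becomes a statement about a polynomial in $\yb$ positive on a subset of $[0,1]^p$, apply the classical Bernstein-type positivity certificate on the unit box (polynomials positive on $[0,1]^p$ have nonnegative Bernstein coefficients after sufficient degree elevation, which is precisely Property \ref{enclosure_prop} read in reverse together with multi-degree elevation), and then substitute back $y_i \mapsto g_i(\xb)$; the Bernstein basis polynomials $\binom{k_i}{\alpha_i} y_i^{\alpha_i}(1-y_i)^{k_i-\alpha_i}$ become exactly nonnegative multiples of the $h_{\ab,\bb}$.

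The delicate point — and the step I expect to be the main obstacle — is the passage from "positive" (i.e. $p(\xb) > 0$ on $\K$, or possibly $p \geq 0$) to an exact equality with a nonnegative combination: the naive Bernstein argument only yields $p = \sum \lambda_{\ab,\bb} h_{\ab,\bb} + (\text{remainder})$ where the remainder is controlled but not obviously zero, so one must argue that elevating the multi-degree $k$ far enough drives the construction to an exact identity. This is handled by the compactness of $\K$ together with a uniform lower bound $p \geq \delta > 0$ on $\K$: one writes $p - \delta \cdot (\text{something manifestly in the cone, e.g. } (\sum h) ^{\text{power}} \equiv 1$ on the relevant set$)$, applies the approximation result to the strictly positive shifted polynomial, and absorbs the error term using that $\delta$ gives room to spare. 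I would also need to be careful that the products $g_i^{\alpha_i}(1-g_i)^{\beta_i}$ genuinely span enough of $\R[\x]$ in each fixed degree — this is again where the algebra-generation hypothesis is used, and it is the reason the section insists the $x_i$ be among the $g_j$. The routine degree bookkeeping that produces the explicit $k$ I would leave to a remark rather than grinding through it here.
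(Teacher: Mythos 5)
Your first route is exactly the paper's treatment: Theorem~\ref{dense_stengle_th} is not proved in the paper but quoted from Krivine and Stengle~\cite{krivineanneaux,stengle} (see also Theorem~1(b) of~\cite{bsos}), and the standing hypotheses of Section~\ref{sec:denseKS} --- $\K$ compact, $\K \subseteq [0,1]^n$, the coordinates $x_i$ among the $g_j$, hence $\{1,g_j\}$ generating $\R[\x]$ as an $\R$-algebra --- are precisely the hypotheses of that classical statement. Your steps (1)--(3) are then immediate, so on this line the proposal is correct and matches the paper.

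Two cautions about the rest. First, the ``alternative shorter proof'' advertised in the paper concerns the \emph{sparse} representation (Theorem~\ref{spkrivrep_th}), obtained from Schweighofer's splitting lemma~\cite{schweighofer06} together with the dense theorem used as a black box; it is not an alternative proof of Theorem~\ref{dense_stengle_th} itself. Second, your proposed Bernstein-substitution route has a genuine gap: after setting $y_i = g_i(\xb)$, positivity of $\pfunc$ on $\K$ only gives positivity of a lifted polynomial on the \emph{image} of $\K$ under $\g$, which is in general a proper (often lower-dimensional) subset of $[0,1]^p$, whereas the degree-elevation/Bernstein certificate requires positivity on the whole box; moreover the writing of $\pfunc$ as a polynomial in the $g_i$ is not unique, and a given lifting need not be positive off the image even when $\pfunc>0$ on $\K$. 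Repairing this is essentially the content of Krivine's original argument (via Archimedean preorderings and the Kadison--Dubois representation theorem), so the substitution does not yield a shortcut. The Bernstein degree-elevation argument does give a self-contained proof in the special case $\K=[0,1]^n$ with $g_i=x_i$ --- this is the mechanism exploited in Section~\ref{complexity} via~\cite{deKlerk10Error} --- but it does not extend to general basic semialgebraic $\K$ by change of variables.
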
	
\new{We denote by $\mathcal{H}_k(\X)$ the set of polynomials having a dense Krivine-Stengle representation (of degree at most $k$) as in~\eqref{eq:ks}.}
It is possible to compute the weights $\lambda_{\ab,\bb}$ by identifying in the monomial basis the coefficients of the polynomials in the left and right sides of~\eqref{eq:ks}. 
\new{Given $\pfunc \in \mathcal{H}_k(\X)$ and} 
denoting by $(\pfunc)_{\gb}$ the monomial coefficients of $\pfunc \in \mathcal{H}_k(\X)$, \new{with $\gb\in\mathbb{N}_{\kh}^n := \{\gb\in\mathbb{N}^n : |\gb|\leq \kh=k \, \dg\}$}, the $\lambda_{\ab,\bb}$ fulfill the following equalities:
\begin{equation}
\pfunc_{\gb} = \sum_{|\ab+\bb|\leq k}\lambda_{\ab,\bb}(h_{\ab,\bb})_{\gb}, \quad \forall \gb\in\mathbb{N}_{\kh}^n.
\end{equation}
\subsubsection{Global optimization using the dense Krivine-Stengle representations}
\label{sec:globalKS}
Here we consider the polynomial minimization problem  $\underline{f} : = \min_{\xb\in \cSet }f(\xb)$, with $f$ a polynomial of degree $d$. We can rewrite this problem as the following infinite dimensional problem:
\new{\begin{equation}
	\label{infopti_eq}
     \begin{aligned}
        \underline{f} :=& \max\limits_{t\in\mathbb{R}}~~t,\\
        	  & \text{s.t.}~~f(\xb) - t \geq 0 \,, \quad \forall \xb\in  \cSet .  \\   
      \end{aligned}
\end{equation}}
The idea is to look for a hierarchy of finite dimensional linear programming (LP) relaxations by using Krivine-Stengle representations of the positive polynomial $\pfunc = f - t$ involved in Problem~\eqref{infopti_eq}. 
%
%
Applying Theorem~\ref{dense_stengle_th} to this polynomial, we obtain the following LP problem for each $k \geq d$:
\new{\begin{equation}
	\label{eq:pk}
	     \begin{aligned}
        	  \underline{f}_k := \max\limits_{t,\lambda_{\ab,\bb}} 
        	  \quad & t,\\
        	  \text{s.t } \quad & (f - t)_{\gb} = \sum_{|\ab+\bb|\leq k}\lambda_{\ab,\bb}(h_{\ab,\bb})_{\gb} \,, \quad \forall \gb\in\mathbb{N}_{\kh}^n \,,\\   
      		  \quad & \lambda_{\ab,\bb}\geq 0.
      	   \end{aligned}
	\end{equation} }

\new{Note that $\underline{f}_k = \max \{t : f - t \in \mathcal{H}_k(\X) \}$.}
As in~\cite[(4)]{bsos}, one has:
\begin{theorem}[Dense Krivine-Stengle LP relaxations]
\label{denseLPrelax_th}
The sequence of optimal values $(\underline{f}_k)$ satisfies  $\underline{f}_k \rightarrow \underline{f}$ as $k\rightarrow +\infty$. Moreover each $\underline{f}_k$ is a lower bound of $\underline{f}$.
\end{theorem}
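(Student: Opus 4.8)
The plan is to get the lower-bound property and monotonicity of $(\underline{f}_k)$ by direct manipulation of the Krivine-Stengle certificates, and then to derive the convergence $\underline{f}_k \to \underline{f}$ from the existence result (Theorem~\ref{dense_stengle_th}) applied to a suitably shifted polynomial. The only real ingredient is that existence result; the rest is bookkeeping.

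First I would show that every $\underline{f}_k$ is a lower bound of $\underline{f}$. Let $t$ be feasible for~\eqref{eq:pk} with weights $\lambda_{\ab,\bb}\geq 0$. Using the characterization $\underline{f}_k = \max\{t : f - t \in \mathcal{H}_k(\X)\}$ recalled just above the statement, feasibility means $f(\xb) - t = \sum_{|\ab+\bb|\leq k}\lambda_{\ab,\bb}h_{\ab,\bb}(\xb)$ as a polynomial identity in $\R[\xb]$: indeed $\deg h_{\ab,\bb} \leq |\ab+\bb|\,\dg \leq k\,\dg = \kh$ and $\deg f = d \leq \kh$ (since $\dg\geq 1$ because the $x_i$ are among the $g_j$, and $k\geq d$), so the coefficient-wise equalities over $\gb\in\mathbb{N}_{\kh}^n$ already encode the full identity. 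On $\K$ one has $0\leq g_i(\xb)\leq 1$ for all $i$, hence each $h_{\ab,\bb}(\xb) = \prod_{i}g_i(\xb)^{\alpha_i}(1-g_i(\xb))^{\beta_i}\geq 0$; therefore $f(\xb)-t\geq 0$ on $\K$, i.e.\ $t\leq \underline{f}$. Taking the supremum over feasible $t$ yields $\underline{f}_k\leq \underline{f}$. Monotonicity is immediate: any $h_{\ab,\bb}$ with $|\ab+\bb|\leq k$ also has $|\ab+\bb|\leq k+1$, so $\mathcal{H}_k(\X)\subseteq \mathcal{H}_{k+1}(\X)$ and thus $\underline{f}_k\leq \underline{f}_{k+1}$. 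Hence $(\underline{f}_k)$ is nondecreasing and bounded above by $\underline{f}$.

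For the convergence, fix $\varepsilon>0$ and set $\pfunc := f - \underline{f} + \varepsilon \in \R[\xb]$. Since $f(\xb)\geq \underline{f}$ on the compact basic semialgebraic set $\K$, we get $\pfunc(\xb)\geq \varepsilon > 0$ on $\K$, so $\pfunc$ is positive over $\K$. As $\K\subseteq[0,1]^n$ and the $x_i$ belong to the family defining $\K$, the hypotheses of Theorem~\ref{dense_stengle_th} hold, so there are $k\in\N$ and weights $\lambda_{\ab,\bb}\geq 0$ with $\pfunc = \sum_{|\ab+\bb|\leq k}\lambda_{\ab,\bb}h_{\ab,\bb}$, i.e.\ $f - (\underline{f}-\varepsilon) \in \mathcal{H}_k(\X)$. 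Thus $t = \underline{f}-\varepsilon$ is feasible for~\eqref{eq:pk} at level $k$, so $\underline{f}_k\geq \underline{f}-\varepsilon$, and by monotonicity $\underline{f}-\varepsilon\leq \underline{f}_{k'}\leq \underline{f}$ for all $k'\geq k$. Since $\varepsilon>0$ is arbitrary, $\underline{f}_k\to\underline{f}$.

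The main (and essentially only) obstacle is invoking Theorem~\ref{dense_stengle_th}, which requires \emph{strict} positivity of the certified polynomial on a compact set whose constraints generate $\R[\xb]$ as an $\R$-algebra; the shift by $\varepsilon$ is exactly what supplies strict positivity, and the standing assumptions on $\K$ supply the algebraic condition. A minor point worth a remark is that for small $k$ the LP~\eqref{eq:pk} may be infeasible, giving $\underline{f}_k = -\infty$; this is harmless, since the convergence argument shows $\underline{f}_k$ is finite and within $\varepsilon$ of $\underline{f}$ for $k$ large, and monotonicity then propagates finiteness to all larger indices.
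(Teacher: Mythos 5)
Your proof is correct: the paper does not actually spell out a proof of Theorem~\ref{denseLPrelax_th} (it cites~\cite{bsos}), but your argument — the lower-bound property from nonnegativity of the $h_{\ab,\bb}$ on $\K$, monotonicity from $\mathcal{H}_k(\X)\subseteq\mathcal{H}_{k+1}(\X)$, and convergence by applying Theorem~\ref{dense_stengle_th} to the shifted polynomial $f-\underline{f}+\varepsilon$ — is exactly the standard one, and it is the same strategy the paper itself uses to prove the sparse analogue (Proposition~\ref{sparseLPrelax_th}). Nothing is missing; your remark about possible infeasibility (hence $\underline{f}_k=-\infty$) at small $k$ is a sensible aside.
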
 
At fixed $k$, the total number of variables of LP~\eqref{eq:pk} is given by the number of $\lambda_{\ab,\bb}$ and $t$, that is \new{$\binom{2p+k}{k} +1$, where $p$ is the dimension of $\g$. The number of constraints is equal to the cardinality of $\mathbb{N}_{\kh}^n$, which is $\binom{n+\kh}{\kh}$. We recall that $\kh = k \, \dg$.}
\new{In the particular case where $\K$ is an hypercube, the LP has $\binom{2n+k}{k} +1$ variables and $\binom{n+k}{k}$ constraints.}
\subsubsection{Sparse Krivine-Stengle representations}
\label{sec:sparseKS}
We now explain how to derive less computationally expensive LP relaxations, by relying on sparse Krivine-Stengle representations. 
 For $I \subseteq \{1,\dots,n\}$, let $\mathbb{R}[\xb, I]$ be the ring of polynomials restricted to the variables $\{x_i~:~i\in I \}$. 
We borrow the notion of a sparsity pattern from \cite[Assumption 1]{sbsos}:
\begin{definition}[Sparsity Pattern]
\label{sparse_def}
Given $m\in\mathbb{N}$, $I_j\subseteq\{1,\dots,n\}$, and $J_j \subseteq\{1,\dots,p\}$ for all $j = 1,\dots,m$, a sparsity pattern is defined by the four following conditions:
\begin{itemize}
\item $f$ can be written as: $f = \sum_{j=1}^{m}f_j$ with $f_j\in\mathbb{R}[\xb,I_j]$, 
\item $g_i \in\mathbb{R}[\xb,I_j]$ for all $i \in J_j$, for all $j = 1, \dots, m$,
\item $\bigcup_{j=1}^m I_j = \{1,\dots,n\}$ and $\bigcup_{j=1}^m J_j = \{1,\dots,p\}$,
\item (Running Intersection Property)~for all $j=1,\dots,m-1$, there exists $s\leq j$ s.t. $I_{j+1} \cap \bigcup_{i=1}^j I_i \subseteq I_s$.
\end{itemize}
\end{definition}
As an example, the four conditions stated in Definition~\ref{sparse_def} are satisfied while considering $f(\xb) = x_1 x_2 + x_1^2 x_3$ on the hypercube $\cSet = [0, 1]^3$. Indeed, one has $f_1(\xb) = x_1 x_2 \in \mathbb{R}[\xb,I_1]$, $f_2(\xb) = x_1^2 x_3 \in \mathbb{R}[\xb,I_2]$ with $I_1 = \{1,2\}$, $I_2 = \{1,3\}$. Taking $J_1 = I_1$ and $J_2 = I_2$, one has $g_i = x_i \in \mathbb{R}[\xb,I_j]$ for all $i \in I_j$, $j=1,2$.

Let us consider a given sparsity pattern as stated above. By noting $n_j = |I_j|$, $p_j = |J_j|$, then the set $ \cSet  = \{\xb\in\mathbb{R}^n : 0\leq g_i(\xb)\leq 1, \, i=1,\dots,p\}$ yields subsets $ \cSet_j = \{\xb\in\mathbb{R}^{n_j} : 0\leq g_i(\xb)\leq 1,~i\in J_j\}$, with $j = 1,\dots,m$. If $ \cSet $ is a compact subset of $\mathbb{R}^n$ then each $\cSet_j$ is a compact subset of $\mathbb{R}^{n_j}$.
As in the dense case, let us note $h_{\ab_j,\bb_j} :=  \mathbf{g}^{\ab_j}(\mathbf{1}- \mathbf{g})^{\bb_j}$, for given $\ab_j, \bb_j \in \mathbb{N}^{n_j}$. \newline

The following result, a sparse variant of Theorem~\ref{dense_stengle_th}, can be retrieved from~\cite[Theorem 1]{sbsos} but we also provide here a shorter alternative proof by using~\cite{schweighofer06}.
\begin{theorem}[Sparse Krivine-Stengle representations]
\label{spkrivrep_th}
Let $f,g_1,\dots,g_p\in\mathbb{R}[\xb]$ be given and assume that there exist $I_j$ and $J_j$, $j=1,\dots,m$, which satisfy the four conditions stated in Definition~\ref{sparse_def}. If $f$ is positive over $\mathbf{K}$, then there exist $\phi_j \in \mathbb{R}[\xb,I_j]$, $j=1,\dots,m$ such that $f=\sum_{j=1}^m\phi_j$ and $\phi_j > 0$ over $\cSet_j$. In addition, there exist $k\in\mathbb{N}$ and finitely many nonnegative weights $\lambda_{\ab_j,\bb_j}$, $j=1,\dots,m$, such that:
\begin{equation}
\phi_j = \sum_{|\ab_j+\bb_j|\leq k}\lambda_{\ab_j,\bb_j} h_{\ab_j,\bb_j}~, \quad j=1,\dots,m.
\end{equation}
\end{theorem}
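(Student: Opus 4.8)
The plan is to decompose the positive polynomial $f$ over $\mathbf{K}$ into a sum $\sum_{j=1}^m \phi_j$ with each $\phi_j \in \mathbb{R}[\xb,I_j]$ strictly positive on the corresponding lower-dimensional set $\cSet_j$, and then apply the \emph{dense} Krivine-Stengle theorem (Theorem~\ref{dense_stengle_th}) separately to each $\phi_j$ over $\cSet_j$. The second part is essentially automatic: since $\cSet_j \subseteq [0,1]^{n_j}$ is compact, $\phi_j$ is positive on $\cSet_j$, and the variables $x_i$ with $i\in I_j$ are themselves among the constraint polynomials defining $\cSet_j$ (so the $\mathbb{R}$-algebra generation hypothesis is inherited), Theorem~\ref{dense_stengle_th} yields the desired nonnegative weights $\lambda_{\ab_j,\bb_j}$. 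Taking $k$ to be the maximum of the finitely many degrees arising from the $m$ dense representations gives the uniform bound stated in the theorem. So the entire content of the proof is the \emph{decomposition lemma}: a polynomial positive on $\mathbf{K}$ and compatible with the sparsity pattern can be split into a sum of terms, one per clique, each positive on its own restricted set.

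This decomposition is exactly the statement proved by Grimm, Netzer and Schweighofer in~\cite{schweighofer06} (their sparse Positivstellensatz for the sum-of-squares/Putinar setting), and the plan is to invoke their combinatorial argument verbatim, observing that it does not use the algebraic form of the positivity certificate at all --- only compactness of the sets $\cSet_j$, the running intersection property, and a partition-of-unity type construction. Concretely, I would proceed by induction on $m$, the number of cliques. For $m=1$ there is nothing to prove. For the inductive step, write $\{1,\dots,n\} = A \cup I_m$ where $A = \bigcup_{j<m} I_j$, and let $C = A \cap I_m$; by the running intersection property $C \subseteq I_s$ for some $s < m$. The key gadget is a continuous (indeed polynomial, after a Stone--Weierstrass / explicit approximation step on a box) function $\theta$ depending only on the variables in $C$ with $0 \le \theta \le 1$, which is used to ``transfer mass'': one writes $f = \theta f + (1-\theta) f$, pushes $\theta f$ into the sub-pattern on $A$ and $(1-\theta)f$ onto $I_m$, arranging that after the transfer each piece is still \emph{strictly} positive on the relevant set. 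Strict positivity on the compact sets is what gives room to absorb the approximation errors when $\theta$ is replaced by a genuine polynomial in $\mathbb{R}[\xb,C] \subseteq \mathbb{R}[\xb,I_s]$.

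The main obstacle --- and the only genuinely delicate point --- is making the ``transfer of mass'' preserve strict positivity on each $\cSet_j$ simultaneously while keeping each summand inside the prescribed variable set $\mathbb{R}[\xb,I_j]$. One must choose $\theta$ so that on the part of $\cSet_m$ lying ``outside'' the overlap it equals $0$ (so $\theta f$ contributes nothing there and the $(1-\theta)f$ piece carries the full positive value), on the part of $\cSet_A$ outside the overlap it equals $1$, and in the overlap region it interpolates; a compactness argument produces a uniform $\delta>0$ lower bound for $f$ on the relevant compacts, and the polynomial approximating $\theta$ need only be within $\delta/(2\sup|f|)$ of the ideal cutoff. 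Care is also needed that the correction term $(\theta - \tilde\theta)f$, which is spread over the overlap clique $I_s$, does not destroy positivity there --- again handled by the uniform margin. Once the decomposition lemma is in hand, assembling the final statement is routine, and I would present it in one short paragraph: apply Theorem~\ref{dense_stengle_th} to each $\phi_j$, collect the weights, and set $k := \max_j k_j$.

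Two small bookkeeping items I would flag rather than belabor: first, Theorem~\ref{dense_stengle_th} as stated requires the relevant set to sit inside $[0,1]^{n_j}$ and to have the coordinate functions among its defining inequalities --- both are inherited from the standing assumptions on $\mathbf{K}$ and from the hypothesis $J_j \ni$ (the indices of) those coordinates, so this needs only a sentence; second, the claimed \emph{strict} positivity $\phi_j > 0$ on $\cSet_j$ (as opposed to $\ge 0$) is precisely what the induction delivers and precisely what Theorem~\ref{dense_stengle_th} consumes, so the interface matches with no loss. The upshot is that this proof is "shorter" than the one in~\cite{sbsos} exactly because it outsources the analytic heavy lifting to~\cite{schweighofer06} and reduces everything else to a single application of the classical dense result.
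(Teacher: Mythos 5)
Your proposal is correct and follows essentially the same route as the paper: invoke Lemma~3 of Grimm--Netzer--Schweighofer~\cite{schweighofer06} to split $f$ into clique-wise strictly positive pieces $\phi_j$, apply the dense Theorem~\ref{dense_stengle_th} to each $\phi_j$ over $\cSet_j$, and take $k$ to be the maximum of the resulting degrees. The only difference is that you sketch the inductive mass-transfer proof of the decomposition lemma, whereas the paper simply cites it.
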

\begin{proof}
From \cite[Lemma 3]{schweighofer06}, \new{there exist} $\phi_j \in \mathbb{R}[\xb,I_j]$, \new{$j=1,\dots, m$}, such that $f=\sum_{j=1}^m\phi_j$ and $\phi_j>0$ on $\mathbf{K}_j$. 
Applying Theorem~\ref{dense_stengle_th} on each $\phi_j$, there exist $k_j\in\mathbb{N}$ and finitely many nonnegative weights $\lambda_{\ab_j,\bb_j}$ such that $\phi_j = \sum_{|\ab_j+\bb_j|\leq k_j}\lambda_{\ab_j,\bb_j}h_{\ab_j,\bb_j}.$
By taking $k = \max_{1 \leq j \leq m} \{k_j\}$, we complete the representations with as many zero $\lambda$ as necessary to obtain the desired result.
\end{proof}
\new{As in Section~\ref{sec:denseKS}, we note $\mathcal{H}_k(\K)$ the set of functions with sparse Krivine-Stengle representations (of degree at most $k$) given in Theorem~\ref{spkrivrep_th}.}

%
In Theorem~\ref{spkrivrep_th}, one assumes that $f$ can be written as the sum $f = \sum_{j=1}^m f_j$, where each $f_j$ is not necessarily positive. The first result of the theorem states that $f$ can be written as another sum $f = \sum_{j=1}^m \phi^j$, where each $\phi_j$ is now positive.
%
As in the dense case, the $\lambda_{\ab_j,\bb_j}$ can be computed by equalizing the coefficients in the monomial basis. 
\if{
Let us note $\Ic_j^k = \{\gb\in\mathbb{N}_k^n~:~\gamma_i=0\textnormal{ if } i \notin I_j \}$. Then the $\lambda_{\ab_j,\bb_j}$ fulfill the following equalities:
\begin{equation}
\label{sparseequal_eq}
     \begin{aligned}
        	  (f(\xb))_{\gb} = & \sum_{j:\gb\in\Ic_j^k}(\phi_j)_{\gb} ~, \quad \forall\gb\in\bigcup_{j=1}^{m}\Ic_j^k~,\\   
      		  (\phi_j)_{\gb} = & \sum_{|\ab_j+\bb_j|\leq k}\lambda_{\ab_j,\bb_j}(h_{\ab_j,\bb_j})_{\gb} \,, \quad j=1,\dots,m .
     \end{aligned}
\end{equation}
}\fi
We also obtain a hierarchy of LP relaxations to approximate the solution of polynomial optimization problems. For the sake of conciseness, we only provide these relaxations as well as their computational costs in the particular context of roundoff error bounds in Section~\ref{contributions_handelman}.


\section{Two new \new{algorithms} to compute roundoff error bounds}
\label{contributions}
This section is dedicated to our main contributions. We provide two new \new{algorithms} to compute absolute roundoff error bounds using either Bernstein expansions or sparse Krivine-Stengle representations. 
\new{The first algorithm, denoted by~$\fpbern$, \new{takes as input} a program implementing the expression of a rational function $f$, with variables $\x$ satisfying input constraints encoded by the product $\X$ of closed intervals. After adequate change of variables, we assume without loss of generality that $\X := [0, 1]^n$.}
\new{The second algorithm, denoted by~$\fpkristen$, \new{takes as input} a program implementing a polynomial expression $f$, with variables $\x$ satisfying input constraints encoded by a basic compact semialgebraic set $\X \subseteq [0, 1]^n$, as in Section~\ref{sec:denseKS}.
}

Following the simple rounding model described in Section~\ref{preliminaries_floating_point}, we denote by $\hat{f}(\xb,\eb)$ the rounded expression of $f$ after introduction of the rounding variables $\eb$ (one additional variable is introduced for each real variable $x_i$ or constant as well as for each arithmetic operation $+$,$\times$, $-$ or \new{$/$}). For a given machine epsilon $\varepsilon$, these error variables also satisfy a set of constraints encoded by the box $[-\varepsilon, \varepsilon]^m$.\\
As explained in~\cite[Section 3.1]{real2float}, we can decompose the  roundoff error as follows: $r(\xb, \eb) := \hat{f}(\xb,\eb) - f(\xb) = l(\xb,\eb) + h(\xb, \eb)$, where $l(\xb,\eb) := \sum_{j=1}^m \frac{\partial r(\xb,\eb)} {\partial e_j} (\xb,0)  e_j  = \sum_{j=1}^m s_j (\xb) e_j$. 
One obtains an enclosure of $h$ using interval arithmetic to bound second-order error terms in the Taylor expansion of $r$ w.r.t.~$\eb$ (as in~\cite{fptaylor,real2float}).\newline 
%
\new{
Let us note $I^l := [\underline{l}, \overline{l}]$ the interval enclosure of $l$, with $\underline{l} := \min_{(\xb,\eb) \in \X \times \E} l(\xb,\eb)$ and $\overline{l} := \max_{(\xb,\eb) \in \X \times \E} l(\xb,\eb)$. For each $\e \in \E$, we also define $l_{\e}(\x) := l(\x, \e)$ on $\X$.
}
After dividing each error variable $e_j$ by $\varepsilon$, we now consider the optimization of the (scaled) linear part $l' := l/ \varepsilon$ of the roundoff error. 

\subsection{Bernstein expansions of roundoff errors}
\label{contributions_bernstein}
\new{
The first method is the approximation of $\underline{l}'$ (resp.~$\overline{l}'$) by using Bernstein expansions of the polynomials involved in $l'$. 
\subsubsection{Polynomial expressions} 
We start with the simpler case where $l'$ is the scaled linear part of the roundoff error of a program implementing a polynomial expression $f$.
Let $\db$ be the  multi-degree of $f$.
Note that $\db$ is greater than the multi-degree of $s_j(\x) := \frac{\partial r(\x,\e)} {\partial e_j} (\x,0)$, appearing in the definition of $l'$, for all $j=1,\dots,m$.
}
Our procedure is based on the following proposition:
%
%
\begin{proposition}
\label{bernth}
For each $\kb \geq \db$, the polynomial $l'$ can be bounded as follows:
\begin{equation}
\underline{l}'_{\kb}  \leq l'(\xb,\eb) \leq \overline{l}'_{\kb} \,,  \quad \forall (\xb,\eb) \in \X \times \E \,,
\end{equation} 
\new{with $\overline{l}'_{\kb} = \max_{\ab \leq \kb}\sum_{j = 1}^m |b_{\ab}^{(s_j)}|$ and $\underline{l}'_{\kb} = - \overline{l}'_{\kb}$.}
\end{proposition}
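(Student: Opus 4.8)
The plan is to bound $l'(\xb,\eb)$ by working term by term in the error variables $\eb$, treating the polynomial coefficients $s_j(\xb)$ via their Bernstein expansions. Recall that $l'(\xb,\eb) = \sum_{j=1}^m s_j(\xb) e_j$ with $\eb \in \E = [-1,1]^m$ after scaling. First I would fix $\kb \geq \db$ (which dominates the multi-degree of each $s_j$, so the Bernstein expansion of multi-degree $\kb$ is well defined for every $s_j$) and write, using Theorem~1 (Multivariate Bernstein expansion), $s_j(\xb) = \sum_{\ab \leq \kb} b_{\ab}^{(s_j)} \Bb_{\kb,\ab}(\xb)$. Substituting into $l'$ and exchanging sums gives $l'(\xb,\eb) = \sum_{\ab \leq \kb} \bigl( \sum_{j=1}^m b_{\ab}^{(s_j)} e_j \bigr) \Bb_{\kb,\ab}(\xb)$.

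The key elementary facts about the Bernstein basis on $[0,1]^n$ are that $\Bb_{\kb,\ab}(\xb) \geq 0$ for all $\xb \in [0,1]^n$ and that $\sum_{\ab \leq \kb} \Bb_{\kb,\ab}(\xb) = 1$ (partition of unity), both of which follow from the product structure $\Bb_{\kb,\ab}(\xb) = \prod_i \binom{k_i}{\alpha_i} x_i^{\alpha_i}(1-x_i)^{k_i-\alpha_i}$ together with the binomial theorem; these are standard properties of Bernstein polynomials and are implicitly part of the background in Section~\ref{preliminaries_bernstein}. Hence for any fixed $\eb \in [-1,1]^m$, the value $l'(\xb,\eb)$ is a convex combination (with weights $\Bb_{\kb,\ab}(\xb)$) of the numbers $c_{\ab}(\eb) := \sum_{j=1}^m b_{\ab}^{(s_j)} e_j$, so $\min_{\ab \leq \kb} c_{\ab}(\eb) \leq l'(\xb,\eb) \leq \max_{\ab \leq \kb} c_{\ab}(\eb)$. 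One can also observe this is exactly Property~\ref{enclosure_prop} (Enclosure) applied to the polynomial $\xb \mapsto l'(\xb,\eb)$, whose Bernstein coefficients are the $c_{\ab}(\eb)$ by Property~\ref{lin_prop} (Linearity); invoking the enclosure property directly is the cleanest route and avoids re-deriving the partition-of-unity argument.

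Next I would bound $c_{\ab}(\eb)$ uniformly over $\eb \in [-1,1]^m$: by the triangle inequality, $|c_{\ab}(\eb)| = |\sum_{j=1}^m b_{\ab}^{(s_j)} e_j| \leq \sum_{j=1}^m |b_{\ab}^{(s_j)}| \, |e_j| \leq \sum_{j=1}^m |b_{\ab}^{(s_j)}|$, and this bound is attained by choosing $e_j = \operatorname{sign}(b_{\ab}^{(s_j)})$. Taking the maximum over $\ab \leq \kb$ yields $|l'(\xb,\eb)| \leq \max_{\ab \leq \kb} \sum_{j=1}^m |b_{\ab}^{(s_j)}| = \overline{l}'_{\kb}$ for all $(\xb,\eb) \in \X \times \E$, hence $\underline{l}'_{\kb} = -\overline{l}'_{\kb} \leq l'(\xb,\eb) \leq \overline{l}'_{\kb}$, which is the claim.

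There is no serious obstacle here; the only point requiring a little care is the bookkeeping of multi-degrees — ensuring that a single $\kb \geq \db$ simultaneously serves as a valid Bernstein multi-degree for all the $s_j$ (this is fine since $\db$ bounds each of their multi-degrees componentwise, and the Bernstein expansion is available for any multi-degree $\kb$ at least as large as the polynomial's own multi-degree) — together with justifying the interchange of the finite sums over $j$ and over $\ab$, which is immediate. If one prefers to invoke Property~\ref{enclosure_prop} and Property~\ref{lin_prop} rather than the partition-of-unity identity directly, the proof becomes essentially two lines: the Bernstein coefficients of $l'_{\eb}(\xb) := l'(\xb,\eb)$ of multi-degree $\kb$ are $c_{\ab}(\eb) = \sum_j b_{\ab}^{(s_j)} e_j$ by linearity, the enclosure property gives $\min_{\ab} c_{\ab}(\eb) \leq l'_{\eb}(\xb) \leq \max_{\ab} c_{\ab}(\eb)$, and the $\eb$-uniform triangle-inequality bound finishes it.
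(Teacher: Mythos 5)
Your proof is correct and follows essentially the same route as the paper's: fix $\eb$, identify the Bernstein coefficients of $l'_{\eb}$ as $\sum_j e_j b_{\ab}^{(s_j)}$ via linearity (Property~2), apply the enclosure property (Property~3), and then optimize over $\eb\in[-1,1]^m$ coefficient-wise to get $\pm\sum_j|b_{\ab}^{(s_j)}|$. The partition-of-unity digression is just a re-derivation of the enclosure property, which you yourself note is the cleaner path, so nothing further is needed.
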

\begin{proof}
We write $l'_{\eb}\in\mathbb{R}[\xb]$ the polynomial $l'(\xb,\eb)$ for a given $\eb\in \mathbf{E}$. Property~\ref{enclosure_prop} provides the enclosure of $l'_{\eb}(\xb)$ w.r.t. $\xb$ for a given $\eb\in \mathbf{E}$:
\begin{equation}
\label{eq:enclosurele}
\min_{\ab \leq \kb} b_{\ab}^{(l'_{\eb})} \leq l'_{\eb}(\xb)\leq \max_{\ab \leq \kb} b_{\ab}^{(l'_{\eb})} \,, \quad \forall \xb\in[0,1]^n \,,
\end{equation} 
where each Bernstein coefficient satisfies $b_{\ab}^{(l'_{\eb})} = \sum_{j=1}^m e_j b_{\ab}^{(s_j)}$ by Property \ref{lin_prop} (each $e_j$ being a scalar in $[-1,1]$). 
The proof of  the left inequality comes from:
\begin{align*}
\min_{\eb\in[-1,1]^m}\bigl(\min_{\ab \leq \kb}(\sum_{j=1}^m e_j b_{\ab}^{(s_j)})\bigr) &
= \min_{\ab \leq \kb}\bigl( \min\limits_{\eb\in[-1,1]^m}(\sum\limits_{j=1}^m e_j b_{\ab}^{(s_j)})\bigr)\\
  &= \min\limits_{\ab \leq \kb} \sum\limits_{j=1}^m -|b_{\ab}^{(s_j)}| \\
&  = - \max\limits_{\ab \leq \kb} \sum\limits_{j=1}^m |b_{\ab}^{(s_j)}|~~.
\end{align*}
The proof of the right inequality is similar.
\end{proof}
\begin{remark}
\label{rk:cost1}
\new{By Property~\ref{card_prop}}, the computational cost of $\underline{l}'_{\kb}$ is  $m (\kb + \bone)^{\bone}$  since we need to compute the Bernstein coefficients for each $s_j(\xb)$. This cost is polynomial in the degree and exponential in $n$ but is linear in $m$.
\end{remark}
\new{
\subsubsection{Rational function expressions}
We now consider the more general case where $l'$ is the scaled linear part of the roundoff error of a program implementing a rational expression $f := \frac{f_1}{f_2}$ with $f_2 (\x) > 0$ for all $\x \in \X = [0, 1]^n$.  
Let $\db_1$ and $\db_2$ be the multi-degrees of $f_1$ and $f_2$ and $\db := \max\{\db_1,\db_2 \}$. For all $j=1,\dots,m$, we can write each $s_j$, appearing in the definition of $l'$, as $s_j := \frac{\partial r(\x,\e)} {\partial e_j} (\x,0) = \frac{p_j(\x)}{q_j(\x)^2}$, with $p_j$ and $q_j^2$ of multi-degrees less than $2 \db$, and $q_j (\x) \neq 0$ for all $\x \in \X = [0, 1]^n$.
\newline
We extend Proposition~\ref{bernth} as follows.}
\new{
\begin{proposition}
\label{bernthrat}
For each $\kb \geq 2\db$, the rational function $l'$ can be bounded as follows:
\begin{equation}
\underline{l}'_{\kb}  \leq l'(\xb,\eb) \leq \overline{l}'_{\kb} \,,  \quad \forall (\xb,\eb) \in \X \times \E \,,
\end{equation} 
with $\overline{l}'_{\kb} = \displaystyle\max_{\ab \leq \kb}\sum_{j = 1}^m \frac{|b_{\ab}^{(p_j)}|}{|b_{\ab}^{(q_j^2)}|} $ and $\underline{l}'_{\kb} = - \overline{l}'_{\kb}$.
\end{proposition}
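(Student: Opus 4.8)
The plan is to mirror the proof of Proposition~\ref{bernth}, replacing Property~\ref{enclosure_prop} (the polynomial enclosure) by Theorem~\ref{th:bernstein_rational} (the rational enclosure). First I would fix $\eb \in \E = [-1,1]^m$ and view $l'_{\eb}(\x) := l'(\x,\eb)$ as a rational function of $\x$ alone. Writing $l'_{\eb} = \sum_{j=1}^m e_j s_j = \sum_{j=1}^m e_j \frac{p_j(\x)}{q_j(\x)^2}$, I would put this sum over a common denominator, or — more convenient here — note that it suffices to bound each term separately and then combine, since the Bernstein coefficients act coefficientwise. Concretely, for each fixed $j$, since $q_j^2 > 0$ on $[0,1]^n$ and (after choosing $\kb$ large enough) all Bernstein coefficients $b_{\ab}^{(q_j^2)}$ are positive, Theorem~\ref{th:bernstein_rational} applied to $s_j = p_j / q_j^2$ gives
\[
\min_{\ab \leq \kb} \frac{b_{\ab}^{(p_j)}}{b_{\ab}^{(q_j^2)}} \leq s_j(\x) \leq \max_{\ab \leq \kb} \frac{b_{\ab}^{(p_j)}}{b_{\ab}^{(q_j^2)}}, \quad \forall \x \in [0,1]^n.
\]

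Next I would combine these per-term bounds over $\eb$. For each $\x$ and each $\eb \in [-1,1]^m$, one has $|l'_{\eb}(\x)| = |\sum_j e_j s_j(\x)| \leq \sum_j |s_j(\x)|$, and $|s_j(\x)| \leq \max_{\ab \leq \kb} \bigl|b_{\ab}^{(p_j)}/b_{\ab}^{(q_j^2)}\bigr| = \max_{\ab \leq \kb} |b_{\ab}^{(p_j)}| / |b_{\ab}^{(q_j^2)}|$ from the displayed enclosure (the denominators being positive). Summing over $j$ and then bounding $\sum_j \max_{\ab} (\cdot) $ by $\max_{\ab} \sum_j (\cdot)$ — actually the other direction: we want $\sum_j |s_j(\x)| \leq \sum_j \max_{\ab \leq \kb} |b_{\ab}^{(p_j)}|/|b_{\ab}^{(q_j^2)}|$, and this last quantity is at most... — here I need to be careful about which way the inequality between $\sum_j \max$ and $\max \sum_j$ goes. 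We have $\sum_j \max_{\ab} t_{j,\ab} \geq \max_{\ab} \sum_j t_{j,\ab}$, so to match the statement $\overline{l}'_{\kb} = \max_{\ab \leq \kb} \sum_j |b_{\ab}^{(p_j)}|/|b_{\ab}^{(q_j^2)}|$ I would instead re-derive the combined bound directly at the level of Bernstein coefficients, exactly as in Proposition~\ref{bernth}: write the Bernstein coefficients of the common-denominator form of $l'_{\eb}$ and use linearity (Property~\ref{lin_prop}) before taking the max over $\ab$.

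This suggests the cleaner route: put $l'_{\eb} = \bigl(\sum_j e_j \prod_{i \neq j} q_i^2 \cdot p_j\bigr) \big/ \prod_i q_i^2$, i.e. numerator $N_{\eb}(\x)$ and denominator $D(\x) = \prod_{i=1}^m q_i(\x)^2 > 0$ independent of $\eb$. Apply Theorem~\ref{th:bernstein_rational} to $l'_{\eb} = N_{\eb}/D$ for $\kb$ large enough that all $b_{\ab}^{(D)} > 0$; by Property~\ref{lin_prop}, $b_{\ab}^{(N_{\eb})} = \sum_j e_j b_{\ab}^{(\tilde p_j)}$ where $\tilde p_j := p_j \prod_{i \neq j} q_i^2$. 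Then, as in Proposition~\ref{bernth}, $\min_{\eb} \min_{\ab} \frac{\sum_j e_j b_{\ab}^{(\tilde p_j)}}{b_{\ab}^{(D)}} = -\max_{\ab} \frac{\sum_j |b_{\ab}^{(\tilde p_j)}|}{b_{\ab}^{(D)}}$, and symmetrically for the max. Since the paper's statement uses $b_{\ab}^{(p_j)}$ and $b_{\ab}^{(q_j^2)}$, I would then observe (or the authors do, by their notational convention) that $b_{\ab}^{(\tilde p_j)}/b_{\ab}^{(D)}$ reduces to $b_{\ab}^{(p_j)}/b_{\ab}^{(q_j^2)}$ once the common factors $\prod_{i \neq j} q_i^2$ are accounted for — which holds because Bernstein coefficients of a product relate multiplicatively in a way compatible with the ratio. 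The main obstacle is exactly this last bookkeeping step: reconciling the per-term ratios $b_{\ab}^{(p_j)}/b_{\ab}^{(q_j^2)}$ in the statement with the common-denominator ratios that drop out of a direct application of Theorem~\ref{th:bernstein_rational}, i.e. justifying that the enclosure can be applied termwise to each $s_j = p_j/q_j^2$ and then aggregated with the absolute-value/$\max$ manipulation, rather than to $l'_{\eb}$ as a single rational function; the sign condition $q_j^2 > 0$ and positivity of $b_{\ab}^{(q_j^2)}$ for $\kb$ sufficiently large (guaranteed by the convergence of Bernstein coefficients to the function values) is what makes the termwise application legitimate. Everything else is a verbatim repetition of the computation in the proof of Proposition~\ref{bernth}.
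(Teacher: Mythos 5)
Your first route --- fix $\e$, apply Theorem~\ref{th:bernstein_rational} to each $s_j = p_j/q_j^2$ separately, and then sum over $j$ --- is exactly the paper's proof (the paper additionally dispatches the degenerate case where some $b_{\ab}^{(q_j^2)}=0$ by declaring $\overline{l}'_{\kb}=\infty$, a case you skip by taking $\kb$ ``large enough'', whereas the proposition is asserted for every $\kb \geq 2\db$). You have, however, correctly put your finger on the step where that route does not deliver the stated constant: termwise aggregation only yields $l'(\x,\e) \leq \sum_{j=1}^m \max_{\ab\leq\kb} |b_{\ab}^{(p_j)}|/|b_{\ab}^{(q_j^2)}|$, and since $\max_{\ab}\sum_j t_{j,\ab} \leq \sum_j\max_{\ab} t_{j,\ab}$, this is an upper bound \emph{for} the claimed $\overline{l}'_{\kb}$, not a proof that $\overline{l}'_{\kb}$ itself dominates $l'$. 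Unlike the polynomial case of Proposition~\ref{bernth}, where Property~\ref{lin_prop} gives a single Bernstein expansion of $l'_{\e}$ whose $\ab$-th coefficient is $\sum_j e_j b_{\ab}^{(s_j)}$ (so the max legitimately sits outside the sum), here the ratios $b_{\ab}^{(p_j)}/b_{\ab}^{(q_j^2)}$ for different $j$ are not the entries of any common expansion of $l'_{\e}$. The paper's closing phrase ``which implies the desired result'' performs the same silent interchange of $\max$ and $\sum$; read literally, its argument establishes only the weaker bound with $\sum_j\max_{\ab}$.

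Your second route does produce a genuine $\max_{\ab}\sum_j$-shaped constant, but the bookkeeping step you yourself flag as ``the main obstacle'' fails as stated: Bernstein coefficients are not multiplicative under products of polynomials --- $b_{\ab}^{(fg)}$ is a weighted convolution of the coefficients of $f$ and $g$ over multi-indices summing to $\ab$, not $b_{\ab}^{(f)}\,b_{\ab}^{(g)}$ --- so $b_{\ab}^{(\tilde p_j)}/b_{\ab}^{(D)}$ does not reduce to $b_{\ab}^{(p_j)}/b_{\ab}^{(q_j^2)}$, and the common-denominator bound is a different quantity from the one in the statement (with no obvious comparison between the two). So neither branch of your proposal closes the gap. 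To reproduce what the paper actually proves, follow your first route, include the $b_{\ab}^{(q_j^2)}=0$ case, and record the conclusion as $|l'(\x,\e)|\leq\sum_{j=1}^m\max_{\ab\leq\kb}|b_{\ab}^{(p_j)}|/|b_{\ab}^{(q_j^2)}|$ --- a valid certified bound, and the one that the termwise argument in fact supports.
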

}
\new{
\begin{proof}
We first handle the case when for some $j \in \{1,\dots,m\}$, there exists $\ab \leq \kb$ such that $b_{\ab}^{(q_j^2)} = 0$. In this case, one has  $\overline{l}'_{\kb} = \infty$ and $\underline{l}'_{\kb} = - \infty$, so both inequalities trivially hold. \newline
Next, let us assume that all considered Bernstein coefficients of $q_j^2$ are positive. By Theorem~\ref{th:bernstein_rational}, one has for all $j=1,\dots,m$:
\[
\min_{\ab \leq \kb} \frac{b_{\ab}^{(p_j)}}{b_{\ab}^{(q_j^2)}}  \leq \frac{p_j(\x)}{q_j(\x)^2} \leq \max_{\ab \leq \kb} \frac{b_{\ab}^{(p_j)}}{b_{\ab}^{(q_j^2)}} \,, \quad \forall \xb\in[0,1]^n \,, 
\]
yielding 
\[
- \max_{\ab \leq \kb} \frac{|b_{\ab}^{(p_j)}|}{|b_{\ab}^{(q_j^2)}|}  \leq \frac{p_j(\x)}{q_j(\x)^2} \, e_j \leq \max_{\ab \leq \kb} \frac{|b_{\ab}^{(p_j)}|}{|b_{\ab}^{(q_j^2)}|} \,, \quad \forall \xb\in[0,1]^n\,,
\]
which implies the desired result.
\end{proof}
}
\new{
\subsubsection{Algorithm~$\fpbern$}
The algorithm~$\fpbern$, stated in Figure~\ref{alg:fpbern} takes as input $\x := (x_1,\dots,x_n)$, the set $\X := [0, 1]^n$ of bound constraints over $\x$, a rational function expression $f$, the corresponding rounded expression $\hat{f}$ with rounding variable $\e := (e_1,\dots,e_m)$ and the set of bound constraints $\E := [-\varepsilon, \varepsilon]^m$ over $\e$. 
From Line~\lineref{line:r} to Line~\lineref{line:iabound}, the algorithm $\fpbern$ is implemented exactly as in $\fptaylor$~\cite{fptaylor} as well as in~$\realtofloat$~\cite{real2float}. The absolute roundoff error $r$ is decomposed as the sum of an expression $l$ and a remainder $h$ and the enclosure $I^h$ of $h$ is computed thanks to a subroutine $\iabound$ performing basic interval arithmetics. The main difference between $\fpbern$ and $\fptaylor$ (resp.~$\realtofloat$) is that the enclosure of $l':= \frac{l}{\varepsilon}$ is obtained in Line~\lineref{line:blbelow} thanks the computation of the Bernstein expansion as in Proposition~\ref{bernthrat}.
}

\begin{figure}[!t]
\new{
\begin{algorithmic}[1]                    
\Require input variables $\x$, input constraints $\X = [0, 1]^n$, rational function expression $f := \frac{f_1}{f_2}$ with $f_1$, $f_2$ of respective multi-degrees $\db_1$ and $\db_2$, rounded expression $\hat{f}$, error variables $\e$, error constraints $\E = [-\varepsilon, \varepsilon]^m$, multi-degree $\kb \geq 2 \max\{\db_1,\db_2\}$
\Ensure interval enclosure $I_{\kb}$ of the error $\hat{f} - f$ over $\K := \X \times \E$
\State  $r(\x, \e) := \hat{f}(\x,\e) - f(\x)$ \label{line:r}
\For {$j \in \{1,\dots,m\}$} Compute the polynomials $p_j$ and $q_j$ such that $s_j(\x) \gets \frac{\partial r(\x,\e)} {\partial e_j} (\x,0) = \frac{p_j(\x)}{q_j(\x)^2}$
\EndFor
\State $l(\x,\e) \gets \sum_{j=1}^m \frac{p_j(\x)}{q_j(\x)^2} \, e_j$ , $l' \gets \frac{l}{\varepsilon}$ \label{line:l}
\State $h := r - l$ \label{line:h}
\State $I^h := \iaboundfun{h}{\K}$ \label{line:iabound}
\State $\overline{l}'_{\kb} \gets \displaystyle\max_{\ab \leq \kb}\sum_{j = 1}^m \frac{|b_{\ab}^{(p_j)}|}{|b_{\ab}^{(q_j^2)}|}$ \label{line:blbelow}
\State $I_{\kb}^l \gets [\varepsilon \underline{l}'_{\kb}, \varepsilon \overline{l}'_{\kb}]$ \label{line:fpbernbound}
\State \Return $I_{\kb} := I_{\kb}^l + I^h$ 
\end{algorithmic}
}
\caption{\new{$\fpbern$: our algorithm to compute roundoff errors bounds of programs implementing rational function expressions with Bernstein expansions.}}
\label{alg:fpbern}
\end{figure}
\new{
Later on, we provide the convergence rate of Algorithm $\fpbern$ in Section~\ref{complexity} when $f$ is a polynomial expression.
\if{
The following result states that we can approximate $I^l$ as close as desired while relying on Algorithm~$\fpbern$.
\begin{proposition}[Convergence of Algorithm~$\fpbern$]
\label{th:fpbern}
Let $f := \frac{f_1}{f_2}$, with $f_1, f_2 \in \R[\xb]$ of respective multi-degrees $\db_1,\db_2$ and $f_2(\x) > 0$ for all $\x \in [0, 1]^n$. Let $\db := \max \{\db_1,\db_2 \}$.
After running Algorithm~$\fpbern$ on $f$ and $\kb \geq 2 \db$, let $I_{\kb}^l$ be the interval enclosure returned  at Line~\lineref{line:fpbernbound}. Then, the sequence $(I_{\kb}^l)_{\kb \geq 2 \db}$ converges to $I^l$.
\end{proposition}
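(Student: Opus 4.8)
The plan is to reduce the claim to a one–dimensional limit and then squeeze that limit. Since $l(\x,\e)=\sum_{j=1}^m s_j(\x)\,e_j$ is affine in $\e$ over the box $\E=[-\varepsilon,\varepsilon]^m$, maximizing and minimizing over $\e$ first gives $I^l=[-\varepsilon\sigma,\varepsilon\sigma]$, where $\sigma:=\max_{\x\in\X}\varphi(\x)$ and $\varphi(\x):=\sum_{j=1}^m|s_j(\x)|=\sum_{j=1}^m \frac{|p_j(\x)|}{q_j(\x)^2}$. On the other hand, the interval produced at Line~\lineref{line:fpbernbound} is $I_{\kb}^l=[-\varepsilon\sigma_{\kb},\varepsilon\sigma_{\kb}]$ with $\sigma_{\kb}:=\overline{l}'_{\kb}=\max_{\ab\leq\kb}\sum_{j=1}^m \frac{|b_{\ab}^{(p_j)}|}{|b_{\ab}^{(q_j^2)}|}$, understood as $+\infty$ whenever some $b_{\ab}^{(q_j^2)}$ vanishes. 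Hausdorff convergence of $\varepsilon[-\sigma_{\kb},\sigma_{\kb}]$ to $\varepsilon[-\sigma,\sigma]$ amounts to $\sigma_{\kb}\to\sigma$; and Proposition~\ref{bernthrat} already yields $\sigma_{\kb}\geq\sigma$ for every $\kb\geq 2\db$, so all that remains is the matching upper bound $\limsup_{\min_i k_i\to\infty}\sigma_{\kb}\leq\sigma$.

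Next I would bring in the one nonroutine ingredient: the Bernstein coefficients of a fixed polynomial $p$ computed at multi-degree $\kb$ approximate the values of $p$ on the uniform grid, $\max_{\ab\leq\kb}\bigl|b_{\ab}^{(p)}-p(\ab/\kb)\bigr|\to 0$ as $\min_i k_i\to\infty$, at rate $O(1/\min_i k_i)$; this is a classical Bernstein-coefficient estimate. Applied to each $p_j$ and each $q_j^2$ (all of multi-degree at most $2\db\leq\kb$), it furnishes a single sequence $\rho_{\kb}\to 0$ bounding all $|b_{\ab}^{(p_j)}-p_j(\ab/\kb)|$ and $|b_{\ab}^{(q_j^2)}-q_j(\ab/\kb)^2|$ uniformly in $j$ and in $\ab\leq\kb$. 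Since $q_j$ is continuous and nonvanishing on the compact box $\X=[0,1]^n$, there is $\delta_j>0$ with $q_j^2\geq\delta_j$ on $\X$; hence for $\kb$ large enough that $\rho_{\kb}<\min_j\delta_j/2$, every $b_{\ab}^{(q_j^2)}\geq\delta_j/2>0$ — so $\sigma_{\kb}$ is finite, as anticipated by the discussion following Theorem~\ref{th:bernstein_rational} — and the elementary quotient estimate $\bigl|\tfrac{a}{b}-\tfrac{\tilde a}{\tilde b}\bigr|\leq\tfrac{|a-\tilde a|\,|\tilde b|+|\tilde a|\,|b-\tilde b|}{|b|\,|\tilde b|}$ gives $\bigl|\tfrac{b_{\ab}^{(p_j)}}{b_{\ab}^{(q_j^2)}}-s_j(\ab/\kb)\bigr|\leq C_j\rho_{\kb}$, with $C_j$ depending only on $\delta_j$, $\max_{\X}|p_j|$ and $\max_{\X}q_j^2$.

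I would then sum over $j$. Using $\bigl||x|-|y|\bigr|\leq|x-y|$ term by term, $\bigl|\sum_{j}\tfrac{|b_{\ab}^{(p_j)}|}{|b_{\ab}^{(q_j^2)}|}-\varphi(\ab/\kb)\bigr|\leq C\rho_{\kb}$ with $C:=\sum_j C_j$, uniformly over $\ab\leq\kb$; taking the maximum over $\ab\leq\kb$ yields $\bigl|\sigma_{\kb}-\max_{\ab\leq\kb}\varphi(\ab/\kb)\bigr|\leq C\rho_{\kb}$. Every grid point $\ab/\kb$ lies in $\X$, so $\max_{\ab\leq\kb}\varphi(\ab/\kb)\leq\sigma$, whence $\sigma_{\kb}\leq\sigma+C\rho_{\kb}$. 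Combined with $\sigma_{\kb}\geq\sigma$, this gives $0\leq\sigma_{\kb}-\sigma\leq C\rho_{\kb}\to 0$, hence $\sigma_{\kb}\to\sigma$ and therefore $I_{\kb}^l\to I^l$; the same chain shows the convergence is $O(1/\min_i k_i)$.

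The only delicate point — and the place where the hypothesis $f_2>0$ on $\X$ (equivalently, each $q_j$ nonvanishing on $\X$) is genuinely used — is the control of the denominators: one must guarantee that the $b_{\ab}^{(q_j^2)}$ are eventually positive and uniformly bounded away from $0$, for otherwise $\sigma_{\kb}$ could equal $+\infty$ and the bound would be vacuous; this is exactly what the grid-approximation estimate delivers. The sum-of-absolute-values shape of $\varphi$, which prevents a direct appeal to the rational range-enclosure Theorem~\ref{th:bernstein_rational}, is then harmless: it is absorbed by the triangle inequality once the per-term grid estimate is in hand.
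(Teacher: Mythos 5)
Your proof is correct, but it takes a genuinely different route from the paper. The paper's own proof (which appears in a commented-out block of the source) is a one-sentence appeal to an external result, namely Theorem~5 of Garloff et al.\ on the linear convergence of Bernstein range enclosures for multivariate rational functions under degree elevation. You instead reduce everything to the elementary fact that Bernstein coefficients of a fixed polynomial converge to the grid values $p(\ab/\kb)$ at rate $O(1/\min_i k_i)$, and then propagate this through the quotient and the sum of absolute values by hand. Two things your version buys: first, it proves convergence of the \emph{exact} quantity the algorithm computes, namely $\max_{\ab \leq \kb}\sum_{j}|b_{\ab}^{(p_j)}|/|b_{\ab}^{(q_j^2)}|$, whereas the cited theorem speaks of the range enclosure of a single rational function and leaves a small gap to bridge (the objective $\sum_j |s_j(\x)|$ is a sum of absolute values of rational functions, not one rational function, and the per-term Bernstein quotients are not literally the Bernstein enclosure of $l'$ in the joint variables $(\x,\e)$); your eliminating $\e$ first and applying the triangle inequality term by term closes that gap cleanly. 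Second, you make explicit where the hypothesis $f_2>0$ (hence $q_j$ nonvanishing) enters: it forces the denominators $b_{\ab}^{(q_j^2)}$ to be eventually bounded away from zero, so that $\overline{l}'_{\kb}$ is eventually finite — a point the paper's one-liner glosses over but which the text after Theorem~\ref{th:bernstein_rational} acknowledges. Your grid-approximation ingredient is the same mechanism the paper itself uses (one-sidedly) in the proof of Theorem~\ref{th:lcomplexity}, so it is consistent with the paper's toolbox; you state it without proof, but it is indeed classical and the paper's own proof is no more self-contained. Both arguments yield the same $O(1/\min_i k_i)$ rate.
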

\begin{proof}
Given a multivariate rational function, the result from~\cite[Theorem~5]{GarloffBern16} states that one has convergence of the bounds given by Bernstein expansions to the range of the rational function. This convergence is linear with respect to degree elevation $k$ of the Bernstein expansion of multi-degree $\kb = (k,\dots,k)$, yielding the desired result.
\end{proof}
}\fi
}
\begin{example}
\label{bernex}
For the polynomial $l$ defined in~\eqref{eq:l} (Section~\ref{overview}),  \new{one has $l(x,\eb) = (2x^2-x) e_1 + x^2 e_2 + (x^2-x) e_3$. }Applying the above method with $\kb = \db = 2$, one considers the following Bernstein coefficients:
\[ 
b_0^{(l'_{\eb})} = 0, \quad b_1^{(l'_{\eb})} = -\frac{e_1}{2}-\frac{e_3}{2}, \quad b_2^{(l'_{\eb})} = e_1+e_2.
\]
The number of Bernstein coefficients w.r.t.~$x$ is $3$, which is much lower than the one w.r.t.~$(x,\eb)$, which is equal to $24$. 
One can obtain an upper bound (resp.~lower bound)  by taking the maximum (resp.~minimum) of the Bernstein coefficients. 
\new{In this case, $\max_{\eb\in [-1,1]^3} b_1^{(l'_{\eb})} = 0$, $\max_{\eb\in [-1,1]^3} b_2^{(l'_{\eb})} = 1$ and $\max_{\eb\in [-1,1]^3} b_3^{(l'_{\eb})} = 2$. Thus, one obtains $\overline{l}'_{\kb} = 2$ as an upper bound of  $\overline{l}'$.}
\end{example}
\subsection{Sparse Krivine-Stengle representations of roundoff errors}
\label{contributions_handelman}
\new{Here we assume that $f$ is a polynomial and $\X \subseteq [0, 1]^n$ is a basic compact semialgebraic set. We note $d$ the degree of $f$. We also note $\gX$ the vector of $p$ polynomial constraints whose conjunction defines $\X$.}
We explain how to compute lower bounds of $\underline{l}' := \min_{(\xb,\eb) \in \mathbf{X} \times \mathbf{E}} l'(\xb,\eb)$ by using sparse Krivine-Stengle representations. 
We obtain upper bounds of $\overline{l}' := \max_{(\xb,\eb) \in \mathbf{X} \times \mathbf{E}} l'(\xb,\eb)$ in a similar way. Note that the degree of $l'$ is less than $d+1$.

For the sake of consistency with Section~\ref{preliminaries_handelman}, we introduce the variable $\yb \in \mathbb{R}^{n+m}$ defined by \new{$y_i := x_i$, $i=1,\dots,n$ and $y_i := e_{i-n}$, $i=n+1,\dots,n+m$.
Then, one can write the set $\cSet = \mathbf{X}\times\mathbf{E}$  as follows:
 \begin{equation}
 \cSet = \{\yb\in\mathbb{R}^{n+m} : 0 \leq g_j(\yb) \leq 1 \,, \quad j=1,\dots, p+m \} \,,
  \end{equation}
with  $g_j(\yb) := g^{\X}_j(\x)$, for each $j=1,\dots,p$ and $g_j(\yb) := \frac{1}{2}+\frac{e_j}{2}$, for each $j=p+1,\dots,p+m$. }
\begin{lemma}
\label{pattern}
For each $j=1, \dots, m$, let us define $I_j := \{1,\dots,n,n+j\}$ and \new{ $J_j := \{1,\dots,p,p+j\}$. }Then the sets $I_j$ and $J_j$ satisfy the four conditions stated in~Definition~\ref{sparse_def}.
\end{lemma}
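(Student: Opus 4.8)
The goal is to verify the four conditions of Definition~\ref{sparse_def} for the decomposition of $l'$ with the proposed index sets $I_j=\{1,\dots,n,n+j\}$ and $J_j=\{1,\dots,p,p+j\}$ over the variables $\yb \in \mathbb{R}^{n+m}$. The plan is to go through the four conditions one by one, since each is essentially a bookkeeping check once we recall that $l'(\yb) = \sum_{j=1}^m s_j(\x)\, e_j / \varepsilon$, where $s_j$ depends only on $x_1,\dots,x_n$, i.e.\ on the variables indexed by $\{1,\dots,n\}$.

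\emph{First condition (decomposition of $f$).} Here the role of ``$f$'' in Definition~\ref{sparse_def} is played by $l'$. Set $f_j(\yb) := \tfrac{1}{\varepsilon}\, s_j(\x)\, e_j = \tfrac{1}{\varepsilon}\, s_j(y_1,\dots,y_n)\, y_{n+j}$. Then $l' = \sum_{j=1}^m f_j$, and since $s_j$ involves only $x_1,\dots,x_n$ and $e_j = y_{n+j}$, each $f_j$ lies in $\mathbb{R}[\yb, I_j]$ with $I_j = \{1,\dots,n,n+j\}$. \emph{Second condition (constraint polynomials).} For $i \in J_j$ we must check $g_i \in \mathbb{R}[\yb, I_j]$. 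If $i \le p$, then $g_i(\yb) = g_i^{\X}(\x)$ depends only on $x_1,\dots,x_n$, hence lies in $\mathbb{R}[\yb,I_j]$ because $\{1,\dots,n\}\subseteq I_j$. The only remaining element of $J_j$ is $i=p+j$, and $g_{p+j}(\yb) = \tfrac12 + \tfrac12 e_j = \tfrac12 + \tfrac12 y_{n+j} \in \mathbb{R}[\yb,I_j]$ since $n+j \in I_j$.

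\emph{Third condition (covering).} We have $\bigcup_{j=1}^m I_j = \{1,\dots,n\} \cup \{n+1,\dots,n+m\} = \{1,\dots,n+m\}$ (assuming $m\ge 1$, which holds since at least one rounding variable is introduced), and similarly $\bigcup_{j=1}^m J_j = \{1,\dots,p\} \cup \{p+1,\dots,p+m\} = \{1,\dots,p+m\}$. \emph{Fourth condition (running intersection property).} For each $j=1,\dots,m-1$ we compute $I_{j+1} \cap \bigcup_{i=1}^j I_i$. Since $I_{j+1} = \{1,\dots,n,n+j+1\}$ and $\bigcup_{i=1}^j I_i = \{1,\dots,n\}\cup\{n+1,\dots,n+j\}$, the index $n+j+1$ does not belong to the union, so the intersection is exactly $\{1,\dots,n\}$. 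This is contained in $I_s$ for \emph{any} $s \le j$ (e.g.\ $s=1$), so the running intersection property holds. This completes the verification of all four conditions.

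\textbf{Main obstacle.} There is no real mathematical difficulty here; the statement is a structural observation. The one point requiring a moment's care is making sure the variable relabeling from $(\x,\eb)$ to $\yb$ is consistent with the way the constraint set $\cSet = \X\times\E$ was rewritten just above the lemma (in particular that the extra constraints $g_{p+j}(\yb)=\tfrac12+\tfrac12 e_j$ involve only $e_j$, so that $J_j$ can be taken as small as $\{1,\dots,p,p+j\}$), and confirming that each $s_j$ genuinely contains no $\eb$-dependence — which is exactly why $s_j(\x) = \partial r/\partial e_j$ is evaluated at $\eb = 0$. Once that is noted, the proof is the short four-item check above.
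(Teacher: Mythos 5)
Your proof is correct and follows essentially the same route as the paper's: identify the decomposition $l'=\sum_j s_j(\x)e_j$ with $s_j\in\mathbb{R}[\yb,I_j]$, note the constraint and covering conditions are immediate, and observe that $I_{j+1}\cap\bigcup_{i\le j}I_i=\{1,\dots,n\}$ gives the running intersection property. The paper's proof is just a terser version of your four-item check (it dismisses the second and third conditions as obvious and takes $s=j$ in the RIP), so there is nothing to add.
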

\begin{proof}
The first condition holds as $l'(\yb) = l'(\xb,\eb) = \sum_{j=1}^m s_j(\xb, \eb) e_j =  \sum_{j=1}^m s_j(\yb)  e_j$, with $s_j(\yb)\in\mathbb{R}[\yb,I_j]$. The second and third condition are  obvious. The running intersection property  comes from $I_{j+1} \cap I_j = \{1, \dots, n \} \subseteq I_j$. 
\end{proof}
Given \new{$\ab,\bb \in \mathbb{N}^{p+1}$, one can write $\ab = (\ab',\gamma)$ and $\bb = (\bb',\delta)$, for $\ab',\bb' \in \mathbb{N}^{p}$, $\gamma,\delta \in \mathbb{N}$.}
In our case, this gives the following formulation for the polynomial $h_{\ab_j,\bb_j}(\yb)= 
\mathbf{g}^{\ab_j}
( \bone - \mathbf{g})^{\bb_j}$:
\new{
\begin{align*}
h_{\ab_j,\bb_j}(\yb) & = h_{\ab_j',\bb_j',\gamma_j,\delta_j}(\xb,\eb)  \\
& = \gX(\xb)^{\ab_j'}
( \bone - \gX(\xb))^{\bb_j'}
( \frac{1}{2}+\frac{e_j}{2})^{\gamma_j}
( \frac{1}{2}-\frac{e_j}{2})^{\delta_j}
 \,. 
 \end{align*}
 }
For instance, with the polynomial $l'$ considered in Section~\ref{overview} \new{on the interval $[0,1]$ }and depending on $x, e_1, e_2, e_3$, one can consider the multi-indices $\ab_1 = (1,2)$, $\bb_1 = (2,3)$ associated to the roundoff variable $e_1$. Then  $h_{\ab_1,\bb_1}(\yb) = x (1 - x)^2 (\frac{1}{2}+\frac{e_1}{2})^2 ( \frac{1}{2}-\frac{e_1}{2})^{3}$.
 
Now, we consider the following hierarchy of LP relaxations, for each $k \geq d+1$:
\begin{align}
\underline{l}'_k := \max_{t,\lambda_{\ab_j,\bb_j}} \quad & t  \,, \nonumber \\
\text{s.t } \quad & l'-t  = \sum_{j=1}^m\phi_j \,, \label{sparsetheq} \\   
\quad & \phi_j  =  \sum_{|\ab_j+\bb_j|\leq k}\lambda_{\ab_j,\bb_j} h_{\ab_j,\bb_j} \,, \ j=1,\dots,m \,, \nonumber \\
\quad & \lambda_{\ab_j,\bb_j}\geq 0  \,, \quad j=1,\dots,m \nonumber \,.
\end{align}
\if{
\begin{equation}
\label{sparsetheq}
\begin{aligned}
\underline{l}'_k := \max_{t,\lambda_{\ab_j,\bb_j}} \quad & t  \,,  \\
\text{s.t } \quad & l'-t  = \sum_{j=1}^m\phi_j \,,  \\   
\quad & \phi_j  =  \sum_{|\ab_j+\bb_j|\leq k}\lambda_{\ab_j,\bb_j} h_{\ab_j,\bb_j} \,, \quad j=1,\dots,m \,, \\
\quad & \lambda_{\ab_j,\bb_j}\geq 0  \,, \quad j=1,\dots,m \,.
\end{aligned}
\end{equation}
}\fi
\new{Note that $\underline{l}'_k = \max \{t : l' - t \in \mathcal{H}_k(\K) \}$, where $\mathcal{H}_k(\K)$ is the set of sparse Krivine-Stengle representations defined in Section~\ref{sec:sparseKS}. Similarly, we obtain $\overline{l}'_k$ while replacing $\max$ by $\min$ and $l'-t$ by $t - l'$ in LP~\eqref{sparsetheq}, that is $\overline{l}'_k = \min \{t : t - l' \in \mathcal{H}_k(\K) \}$.
}

\new{
The algorithm~$\fpkristen$ stated in Figure~\ref{alg:fpkristen} is very similar to $\fpbern$. By contrast with $\fpbern$, $\fpkristen$ takes as input a polynomial $f$ of degree $d$ and does not work for programs implementing rational functions. However, $\fpkristen$ can handle a general basic compact semialgebraic input set of constraints $\X$, i.e.~a set $\X$ defined by a finite conjunction of polynomial inequalities. The lower (resp.~upper) bound of $l':= \frac{l}{\varepsilon}$ is obtained in Line~\lineref{line:kslbelow} (resp.~Line~\lineref{line:kslabove}) by solving the LP relaxation~\eqref{sparsetheq} at order $k \geq d+1$.
}
\begin{figure}[!t]
\new{
\begin{algorithmic}[1]                    
\Require input variables $\x$, input constraints $\X$, polynomial expression $f$ of degree $d$, rounded expression $\hat{f}$, error variables $\e$, error constraints $\E = [-\varepsilon, \varepsilon]^m$, degree $k \geq d+1$
\Ensure interval enclosure $I_k$ of the error $\hat{f} - f$ over $\K := \X \times \E$
\State  $r(\x, \e) := \hat{f}(\x,\e) - f(\x)$ \label{line:rbis}
\For {$j \in \{1,\dots,m\}$} $s_j(\x) \gets \frac{\partial r(\x,\e)} {\partial e_j} (\x,0)$
\EndFor
\State $l(\x,\e) \gets r(\x, 0) + \sum_{j=1}^m s_j(\x) \, e_j$ , $l' \gets \frac{l}{\varepsilon}$ \label{line:lbis}
\State $h := r - l$ \label{line:hbis}
\State $I^h := \iaboundfun{h}{\K}$ \label{line:iaboundbis}
\State $\underline{l}'_k = \max \{t : l' - t \in \mathcal{H}_k(\K) \}$ \label{line:kslbelow} 
\State $\overline{l}'_k = \min \{t : t - l' \in \mathcal{H}_k(\K) \}$ \label{line:kslabove}
\State $I_k^l \gets [\varepsilon \underline{l}'_k, \varepsilon \overline{l}'_k]$ \label{line:fpksbound}
\State \Return $I_k := I_k^l + I^h$ 
\end{algorithmic}
}
\caption{\new{$\fpkristen$: our algorithm to compute roundoff errors bounds of programs implementing polynomial expressions with sparse Krivine-Stengle representations.}}
\label{alg:fpkristen}
\end{figure}

\begin{proposition}[Convergence of Algorithm~$\fpkristen$] 
\label{sparseLPrelax_th}
The sequence of optimal values $(\underline{l}'_k)$ (resp.~$(\overline{l}'_k)$) satisfies $\underline{l}'_k \uparrow \underline{l}'$ (resp.~$\overline{l}'_k \downarrow \overline{l}'$) as $k \rightarrow +\infty$. 
\new{
Thus, after running Algorithm~$\fpkristen$ on the polynomial $f$ of degree $d$,  the sequence of interval enclosures $(I_k^l)_{k \geq d+1}$ returned at Line~\lineref{line:fpksbound} converges to $I^l := [\underline{l}, \overline{l}]$.
}
\end{proposition}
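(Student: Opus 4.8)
The plan is to establish the convergence of the LP relaxations $(\underline{l}'_k)$ and $(\overline{l}'_k)$ by invoking the sparse Krivine-Stengle representation theorem (Theorem~\ref{spkrivrep_th}) in essentially the same manner as Theorem~\ref{denseLPrelax_th} follows from Theorem~\ref{dense_stengle_th} in the dense case. First I would observe that for any $k \geq d+1$, the value $\underline{l}'_k$ is a lower bound of $\underline{l}'$: indeed, if $t$ is feasible for LP~\eqref{sparsetheq} at order $k$, then $l' - t = \sum_{j=1}^m \phi_j$ with each $\phi_j = \sum_{|\ab_j+\bb_j|\leq k} \lambda_{\ab_j,\bb_j} h_{\ab_j,\bb_j} \geq 0$ on $\cSet_j$ since each $h_{\ab_j,\bb_j}$ is a product of the nonnegative quantities $g_i$ and $1-g_i$ on $\cSet_j$ and the $\lambda_{\ab_j,\bb_j}$ are nonnegative; hence $l'(\yb) - t \geq 0$ for all $\yb \in \cSet = \X \times \E$, so $t \leq \underline{l}'$. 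Moreover the feasible sets are nested in $k$ (pad with zero weights), so $(\underline{l}'_k)$ is nondecreasing and bounded above by $\underline{l}'$.

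Next I would prove $\underline{l}'_k \uparrow \underline{l}'$. Fix $\eta > 0$. The polynomial $l' - (\underline{l}' - \eta)$ is strictly positive on the compact set $\cSet = \X \times \E$. By Lemma~\ref{pattern}, the index sets $I_j, J_j$ constitute a valid sparsity pattern for $l'$ and the constraints $\gX, \frac{1}{2}+\frac{e_j}{2}$ defining $\cSet$; moreover $l' - (\underline{l}' - \eta)$ inherits the decomposition $\sum_{j=1}^m (s_j(\yb) e_j) - (\underline{l}' - \eta)$ with the constant absorbed into one summand, so the first condition of Definition~\ref{sparse_def} still holds. Applying Theorem~\ref{spkrivrep_th} to $l' - (\underline{l}' - \eta)$ yields some $k_0 \in \N$ and nonnegative weights exhibiting a sparse Krivine-Stengle representation of degree at most $k_0$; that is, $l' - (\underline{l}' - \eta) \in \mathcal{H}_{k_0}(\K)$, hence $t = \underline{l}' - \eta$ is feasible for LP~\eqref{sparsetheq} at order $k = \max\{k_0, d+1\}$, giving $\underline{l}'_k \geq \underline{l}' - \eta$. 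Since $(\underline{l}'_k)$ is monotone and $\eta$ arbitrary, $\underline{l}'_k \uparrow \underline{l}'$. The argument for $\overline{l}'_k \downarrow \overline{l}'$ is symmetric, applying Theorem~\ref{spkrivrep_th} to $(\overline{l}' + \eta) - l'$, which is strictly positive on $\cSet$.

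Finally, the statement about interval enclosures follows immediately: by definition $I_k^l = [\varepsilon \underline{l}'_k, \varepsilon \overline{l}'_k]$ and $I^l = [\underline{l}, \overline{l}] = [\varepsilon \underline{l}', \varepsilon \overline{l}']$ (recalling $l = \varepsilon l'$, so $\underline{l} = \varepsilon \underline{l}'$ and $\overline{l} = \varepsilon \overline{l}'$), and the endpoint-wise convergence $\underline{l}'_k \to \underline{l}'$, $\overline{l}'_k \to \overline{l}'$ gives $I_k^l \to I^l$ (in Hausdorff distance, or simply endpoint by endpoint). The main obstacle I anticipate is the careful bookkeeping needed to verify that the shifted polynomial $l' - (\underline{l}' - \eta)$ genuinely satisfies \emph{all four} conditions of the sparsity pattern in Definition~\ref{sparse_def}—in particular that shifting by a constant does not break the decomposition $f = \sum_j f_j$—and ensuring that the set $\cSet = \X \times \E$ is contained in $[0,1]^{n+m}$ with all coordinate variables $y_i$ present among the $g_j$, as required for Theorem~\ref{dense_stengle_th} (invoked inside the proof of Theorem~\ref{spkrivrep_th}); the latter is where the hypothesis $\X \subseteq [0,1]^n$ and the rescaling $g_{p+j} = \frac{1}{2} + \frac{e_j}{2}$ of the error variables are essential.
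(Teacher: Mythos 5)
Your proposal is correct and follows essentially the same route as the paper's proof: monotonicity of $(\underline{l}'_k)$, positivity of the $\eta$-shifted polynomial over $\cSet$, Lemma~\ref{pattern} to validate the sparsity pattern, and Theorem~\ref{spkrivrep_th} to produce a feasible point of LP~\eqref{sparsetheq} at some finite order. Your write-up is in fact slightly more careful than the paper's, since you explicitly justify that each $\underline{l}'_k$ is a valid lower bound (via nonnegativity of the $h_{\ab_j,\bb_j}$ on $\cSet_j$) and that absorbing the constant shift into one summand preserves the decomposition required by Definition~\ref{sparse_def}.
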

\begin{proof}
By construction $(\underline{l}'_k)$ is monotone nondecreasing. 
For a given arbitrary $\varepsilon' > 0$, the polynomial $l'- \underline{l}' + \varepsilon'$ is positive over $\cSet$. By Lemma~\ref{pattern}, the subsets $I_j$ and $J_j$ satisfy the four conditions stated in~Definition~\ref{sparse_def}, so we can apply Theorem \ref{spkrivrep_th} to $l'- \underline{l}' + \varepsilon'$. This yields the existence of $\phi_j$, $j=1,\dots,m$, such that $l'- \underline{l}' + \varepsilon' = \sum_{j=1}^m\phi_j$ and $\phi_j = \sum_{|\ab_j+\bb_j|\leq k} \lambda_{\ab_j,\bb_j} h_{\ab_j,\bb_j}$, $j=1,\dots,m$.
Hence, $(\underline{l}' - \varepsilon',\phi_j,\lambda_{\ab_j,\bb_j})$ is feasible for LP~\eqref{sparsetheq}. 
It follows that there exists $k$ such that $\underline{l}'_k \geq l'-\varepsilon'$.
Since $\underline{l}'_k \leq l'$, and $\varepsilon'$ has been arbitrary chosen, we obtain the convergence result for the sequence $(\underline{l}'_k)$. The proof is analogous for $(\overline{l}'_k)$ and yields convergence to the enclosure of $l$.
\end{proof}
%

\begin{remark}
\label{rk:cost2}
\new{In the special case of roundoff error computation, one can prove that the number of variables of LP~\eqref{sparsetheq} is $m \binom {2(p+1)+k} {k} +1$ with a number of constraints equal to $[\frac{m\kh}{n+1}+1]\binom{n+\kh}{\kh}$. This is in contrast with the dense case where the number of LP variables is $\binom {2(p+m)+k} {k} +1$ with a number of  constraints equal to $\binom {n+m+\kh}{\kh}$ . }
\end{remark}

\renewcommand*{\proofname}{Proof of Remark 2}
\begin{proof}
\new{We replace the representation of a function $\phi$ of dimension $(n+m)$ on the set $\K$ by a sum of $m$ functions $\phi_j$ of dimension $(n+1)$ defined on their associated subsets $\K_j$. 
From Section~\ref{sec:globalKS}, the number of coefficients $\lambda_{\ab_j,\bb_j}$ for the K.S. representation of a $\phi_j$ over $\K_j$ is $\binom {2(p+1)+k} {k}$. 
This leads to a total of $m \binom {2(p+1)+k} {k}$ for all the $\phi_j$ and $m \binom {2(p+1)+k} {k} +1$ variables when adding $t$.}

\new{The number of equality constraints is the number of monomials involved in  $\sum_{j=1}^m\phi_j$. Each $\phi_j$ has $\binom {(n+1)+\kh} {\kh}$ monomials. However there are redundant monomials between all the $\phi_j$: the ones depending of only $\xb$, and not $\eb$. These $\binom {n+\kh} {\kh}$ monomials should appear only once. This leads to a final number of $m \binom {(n+1)+\kh} {\kh} - (m-1)\binom {n+\kh} {\kh}$ monomials which is equal to $[\frac{m\kh}{n+1}+1]\binom{n+\kh}{\kh}$. }
\end{proof}

\renewcommand*{\proofname}{Proof}
%
\begin{example}
Continuing Example~\ref{bernex}, for the polynomial $l$ defined in~\eqref{eq:l} (Section~\ref{overview}), \new{we consider LP~\eqref{sparsetheq} at the relaxation order $k = d = 3$. This problem involves $3\binom{2\times(1+1)+3}{3} +1 = 106$ variables and $[\frac{3 \times 3}{2}+1]\binom{4}{3} = 22$ constraints.} This is in contrast with a dense Krivine-Stengle representation, where the corresponding LP involves $35$ linear equalities and $166$ variables. Computing the values of $\underline{l}'_k$ and $\overline{l}'_k$ provides  an upper bound of $2$ for $|l'|$ on $\K$, yielding $|l(x,\e)| \leq 2\varepsilon$, for all $(x,\e) \in [0,1]\times [-\varepsilon,\varepsilon]^3$.
\end{example}
\new{\subsection{Convergence rate of \texttt{FPBern} and \texttt{FPKriSten}}
\label{complexity}
We investigate the convergence rate of Algorithm~$\fpbern$ presented in Section~\ref{contributions_handelman} as well as Algorithm~$\fpkristen$ presented in Section~\ref{contributions_handelman}. For this, we rely mainly on the results from~\cite{deKlerk10Error}.
We restrict our complexity analysis to the case of a program implementing a polynomial expression $f \in \R[\x]$ of multi-degree $d$ with rounded expression $\hat{f}$ and the input set of constraints is the hypercube $\X = [0, 1]^n$. We note $\K = \X \times \E$, with $\E = [-1,1]^m$. 
As shown above, for all $k \geq d+1$ and $\kb = (k,\dots,k)$, the scaled linear part $l'$ of the roundoff error $r = \hat{f}-f$ can be approximated from below either by $\underline{l}_{\kb}'$ (the minimum over the Bernstein coefficients of $l'$) or by $\underline{l}_k'$, the optimal value of LP~\eqref{sparsetheq}.

As in~\cite{deKlerk10Error}, for a polynomial $s \in \R[\x]$, with $s(\x) = \sum_{\ab} s_{\ab} \x^{\ab}$, we set $L(s) := \max_{\ab} |s_{\ab}| \frac{\alpha_1!\cdots\alpha_n!}{|\ab|!}$. 

We first recall the error bounds, given  by Theorem~3.4~(iii) and Theorem~1.4~(iii) in~\cite{deKlerk10Error}:
\begin{theorem}
\label{th:complexity}
Let $s \in \R[\x]$ of degree $d$ and $\underline{s} := \min_{\x \in \X} s(\x)$.  Then the following holds for all $k \geq d$ and $\kb=(k,\dots,k)$:
\begin{align}
\label{eq:complexity}
\underline{s} - \underline{s}_{k n}\leq \underline{s} - \underline{s}_{\kb} \leq \frac{L(s)}{k} \binom{d+1}{3} n^d \,,
\end{align}
\if{ 
\begin{align}
\label{eq:complKS}
\underline{s} - \underline{s_{k n}} \leq \frac{L(s)}{k} \binom{d+1}{3} n^d \,.
\end{align}
}\fi
\end{theorem}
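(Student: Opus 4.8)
The plan is to prove Theorem~\ref{th:complexity} by quoting the two referenced results from~\cite{deKlerk10Error} and then combining them via a monotonicity observation about the hierarchies of bounds. The statement is really two chained inequalities: the left one, $\underline{s} - \underline{s}_{kn} \leq \underline{s} - \underline{s}_{\kb}$, and the right one, $\underline{s} - \underline{s}_{\kb} \leq \frac{L(s)}{k}\binom{d+1}{3}n^d$. I would treat them separately.

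First I would establish the right inequality. This is precisely the content of~\cite[Theorem~3.4~(iii)]{deKlerk10Error}, which gives the error estimate for the Bernstein (or grid-based) lower bound $\underline{s}_{\kb}$ of $\underline{s}$ over the hypercube $[0,1]^n$: the gap $\underline{s} - \underline{s}_{\kb}$ is bounded by $\frac{L(s)}{k}\binom{d+1}{3}n^d$, where $L(s) = \max_{\ab} |s_{\ab}| \frac{\alpha_1!\cdots\alpha_n!}{|\ab|!}$ as defined just above. I would simply invoke that theorem, after checking that our normalization of $L(s)$ and the degree conventions match theirs (the factor $\binom{d+1}{3}n^d$ and the $1/k$ scaling are exactly their form). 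Nothing new needs to be proved here beyond the citation.

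Next I would establish the left inequality, $\underline{s} - \underline{s}_{kn} \leq \underline{s} - \underline{s}_{\kb}$, equivalently $\underline{s}_{\kb} \leq \underline{s}_{kn}$. Here $\underline{s}_{\kb}$ denotes the Krivine-Stengle LP bound at the multi-degree level encoded by $\kb = (k,\dots,k)$, i.e.\ total degree roughly $kn$, while $\underline{s}_{kn}$ is the same hierarchy at the (scalar) order $kn$. The point is that the LP relaxation~\eqref{eq:pk}/\eqref{sparsetheq} is monotone nondecreasing in the relaxation order: a feasible solution at order $k'$ extends (by padding with zero weights) to a feasible solution at any order $k'' \geq k'$, hence $\underline{s}_{k'} \leq \underline{s}_{k''}$. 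Since the multi-degree-$\kb$ representation used to define $\underline{s}_{\kb}$ only involves products $h_{\ab,\bb}$ of total degree at most $kn$ — each factor $g_i = x_i$ contributes degree at most $k$ and there are $n$ variables — any certificate counted by $\underline{s}_{\kb}$ is also admissible at scalar order $kn$, giving $\underline{s}_{\kb} \leq \underline{s}_{kn}$. This is the same inequality structure as~\cite[Theorem~1.4~(iii)]{deKlerk10Error}, which relates the grid bound to the Positivstellensatz/LP bound at the corresponding order, so I would cite that result for the precise correspondence and use the monotonicity argument to justify the chaining.

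The main obstacle I anticipate is \emph{bookkeeping of the indexing conventions}: making sure that "$\underline{s}_{\kb}$" with $\kb = (k,\dots,k)$ in our notation corresponds to the object called (in~\cite{deKlerk10Error}) the degree-$k$ grid bound, and that "$\underline{s}_{kn}$" corresponds to their LP bound at order $kn$ — the factor-of-$n$ shift between a per-variable degree $k$ and a total degree $kn$ is exactly what makes the left inequality non-trivial and is easy to get wrong. Once the dictionary between the two papers' notations is pinned down, both inequalities are immediate consequences of the cited theorems plus the elementary monotonicity of LP hierarchies, so no substantial new computation is required; the proof is essentially a careful citation with a one-line monotonicity remark.
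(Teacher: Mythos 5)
The paper offers no proof of this statement at all: it is recalled verbatim from de~Klerk and Laurent~\cite{deKlerk10Error}, with the right inequality attributed to their Theorem~3.4(iii) and the left one to their Theorem~1.4(iii), which is exactly the citation strategy you propose, so your approach matches the paper's. One caution on your second paragraph: in this paper's notation $\underline{s}_{\kb}$ is the \emph{Bernstein} bound (the minimum of the Bernstein coefficients of multi-degree $\kb$), not a multi-degree Krivine--Stengle LP bound, so the left inequality does not follow from monotonicity of a single LP hierarchy but from the observation that $s - \underline{s}_{\kb} = \sum_{\ab\leq\kb}\bigl(b_{\ab}-\underline{s}_{\kb}\bigr)\mathbf{B}_{\kb,\ab}$ is itself a Krivine--Stengle certificate of total degree $kn$, each $\mathbf{B}_{\kb,\ab}$ being a positive multiple of $\x^{\ab}(\bone-\x)^{\kb-\ab}$ --- the same device the paper reuses in the proof of Theorem~\ref{th:lcomplexity}.
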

We now derive similar bounds for the function $l'$ on $\K$, obtained with either Bernstein expansions or sparse Krivine-Stengle representations. By contrast with~\eqref{eq:complexity}, we obtain upper bounds for the differences $\underline{l}' - \underline{l}_\kb'$ and $\underline{l}' - \underline{l}_{k n + 1}'$ which are proportional to $1/k$, yielding linear convergence rates  w.r.t.~degree elevation for both $\fpbern$ and $\fpkristen$.
\begin{theorem}
\label{th:lcomplexity}
Let $\underline{l}' = \min_{(\x,\e) \in \K} \sum_{j=1}^m s_j(\x) e_j$ and $L_m := \sum_{1 \leq j \leq m} L(s_j)$. Then the following holds for all $k \geq d$ and $\kb=(k,\dots,k)$:
\begin{align}
\label{eq:complexityl}
\underline{l}' - \underline{l}'_{kn+1} \leq \underline{l}'- \underline{l}'_{\kb} \leq \frac{3 L_m}{k} \binom{d+1}{3} n^d \,.
\end{align}
\if{
\begin{align}
\label{eq:complexityB}
 \underline{l}'- \underline{l}'_{\kb} \leq \frac{3 L_m}{k} \binom{d+1}{3} n^d \,,
\end{align}
\begin{align}
\label{eq:complexityKS}
\underline{l}' - \underline{l}'_{kn+1} \leq \frac{3 L_m}{k} \binom{d+1}{3} n^d \,.
\end{align}
}\fi
\end{theorem}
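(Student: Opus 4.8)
The strategy is to reduce the bound for $l'$ to the single-polynomial bound of Theorem~\ref{th:complexity} applied term by term to the summands $s_j(\x) e_j$, then handle the scalar error variables $e_j$ explicitly. First I would observe that $l'(\x,\e) = \sum_{j=1}^m s_j(\x) e_j$, where $s_j \in \R[\x]$ has degree at most $d$ and the $e_j$ range over $[-1,1]$. For a \emph{fixed} error vector $\e \in \E$, the function $l'_\e(\x) := \sum_j s_j(\x) e_j$ is a polynomial in $\x$ of degree at most $d$, and by Property~\ref{enclosure_prop} (for the Bernstein case) the minimum over the Bernstein coefficients of $l'_\e$ of multi-degree $\kb$ lower bounds $l'_\e$ on $[0,1]^n$; minimizing over $\e$ as in the proof of Proposition~\ref{bernth} recovers $\underline{l}'_\kb$. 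So it suffices to bound $\min_{\x} l'_\e(\x) - \underline{l}'_\kb$ uniformly in $\e$, and likewise relate $\underline{l}'_{kn+1}$ to the sparse Krivine-Stengle value.

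Next I would apply Theorem~\ref{th:complexity} to each $s_j$ separately, exploiting the linearity of Bernstein coefficients (Property~\ref{lin_prop}): the Bernstein coefficient $b_\ab^{(l'_\e)} = \sum_j e_j b_\ab^{(s_j)}$, so using $|e_j| \le 1$, the distance from $\underline{l}'_\kb$ to $\underline{l}'$ can be controlled by $\sum_j$ of the individual errors $\underline{s_j} - \underline{(s_j)}_\kb$ and $\underline{-s_j} - \underline{(-s_j)}_\kb$. Since $L(-s_j) = L(s_j)$, each such individual error is at most $\frac{L(s_j)}{k}\binom{d+1}{3} n^d$ by~\eqref{eq:complexity}. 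Summing over $j$ and accounting for both signs of each $e_j$ gives a factor depending on $L_m = \sum_j L(s_j)$; the constant $3$ in~\eqref{eq:complexityl} should come out of carefully bundling the contributions (the $e_j$ can be positive or negative, and one also has the shift from the zero-order term $r(\x,0)$ in $\fpkristen$, which is why a slightly larger constant than $1$ or $2$ appears). For the left inequality $\underline{l}' - \underline{l}'_{kn+1} \le \underline{l}' - \underline{l}'_\kb$, I would invoke the comparison already embedded in Theorem~\ref{th:complexity} (the $\underline{s}_{kn}$ bound), adapted to the sparse setting: the sparse Krivine-Stengle relaxation at order $kn+1$ is at least as strong as the degree-$(k,\dots,k)$ Bernstein bound because, restricted to each block $I_j = \{1,\dots,n,n+j\}$, the relevant polynomial has degree $\le d+1$ in $n+1$ variables, and the dense Krivine-Stengle bound of order $kn$ on $[0,1]^{n}$ dominates the Bernstein bound of multi-degree $(k,\dots,k)$ per~\cite{deKlerk10Error}; adding one more degree to accommodate the linear $e_j$ factor gives $kn+1$.

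The main obstacle I anticipate is getting the bookkeeping exactly right so that the final constant is $3L_m$ rather than, say, $2L_m$ or $mL_m$: one must be sure that minimizing $\sum_j s_j(\x)e_j$ jointly over $(\x,\e)$ — rather than bounding each $|s_j(\x)e_j|$ in isolation via interval arithmetic — is genuinely captured by the per-summand application of Theorem~\ref{th:complexity}, and that the worst-case sign pattern of the $e_j$ does not introduce cross terms that inflate the constant beyond a fixed multiple of $L_m$. A secondary subtlety is justifying the index shift from $kn$ to $kn+1$ for $\fpkristen$: the linear dependence on $e_j$ raises the effective degree of each block polynomial $\phi_j$ from $d$ to $d+1$, and I would need to check that~\cite[Theorem 1.4]{deKlerk10Error} still yields the stated $1/k$ rate at this shifted order (rather than $1/(kn+1)$ dressed up), which is why the theorem states $kn+1$ and keeps the $1/k$ denominator.
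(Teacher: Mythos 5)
Your plan assembles the right ingredients, but the central step of the right inequality has a genuine gap. You propose to control $\underline{l}' - \underline{l}'_{\kb}$ by applying Theorem~\ref{th:complexity} \emph{separately} to each $s_j$ (and $-s_j$) and summing the individual errors $\underline{s_j} - \underline{(s_j)}_{\kb}$. This cannot work: those are comparisons of \emph{minima}, and summing them decouples the $s_j$'s. Since $\underline{l}'_{\kb} = -\max_{\ab\leq\kb}\sum_j |b_{\ab}^{(s_j)}|$ while $\underline{l}' = -\max_{\x}\sum_j |s_j(\x)|$, a per-summand bound only yields $\underline{l}'_{\kb} \geq -\sum_j \max_{\x}|s_j(\x)| - \sum_j C_j$, and the decoupling loss $\sum_j \max_{\x}|s_j(\x)| - \max_{\x}\sum_j |s_j(\x)|$ is $O(1)$, not $O(1/k)$ (e.g.~$s_1 = x$, $s_2 = 1-x$ already exhibit a loss of $1$ independent of $k$). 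What the paper actually uses is the stronger \emph{pointwise} ingredient behind Theorem~\ref{th:complexity}, namely the decomposition $s_j = \sum_{\ab\leq\kb} s_j(\ab/k)\mathbf{B}_{\kb,\ab} + h_j - C(k,s_j)$ with $h_j$ having nonnegative Bernstein coefficients, which gives $b_{\ab}^{(s_j)} \geq s_j(\ab/k) - C(k,s_j)$ \emph{at every index} $\ab$. This lets one compare $b_{\ab}^{(l'_{\e})} = \sum_j e_j b_{\ab}^{(s_j)}$ with $l'(\ab/k,\e)$ at the \emph{same} grid point $\ab/k$, preserving the coupling and yielding the $O(L_m/k)$ rate; the constant $3$ then falls out of the paper's change of variables $\e \in [-1,1]^m \to [0,1]^m$ (a factor $2$ on each $s_j$ plus the constant-in-$\e$ polynomial $s_0 = -\sum_j s_j$ with $L(s_0) \leq L_m$), not from a sign-pattern bookkeeping as you suggest.

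For the left inequality your intuition (Bernstein basis polynomials in $\x$ are degree-$kn$ Krivine-Stengle products, plus one degree for the linear $e_j$ factor) is correct, but the paper does not invoke an abstract domination of relaxations: it constructs the certificate explicitly. It expands $l' - \underline{l}'_{\kb} = \sum_{\ab\leq\kb}\bigl[\sum_j b_{\ab}^{(s_j)} e_j - \underline{l}'_{\kb}\bigr]\mathbf{B}_{\kb,\ab}(\x)$ and writes each bracket as $\sum_j u_{\ab,j}\tfrac{1-e_j}{2} + \sum_j v_{\ab,j}\tfrac{1+e_j}{2} + w_{\ab}$ with $u_{\ab,j} = |b_{\ab}^{(s_j)}| - b_{\ab}^{(s_j)} \geq 0$, $v_{\ab,j} = |b_{\ab}^{(s_j)}| + b_{\ab}^{(s_j)} \geq 0$ and $w_{\ab} = -\sum_j|b_{\ab}^{(s_j)}| - \underline{l}'_{\kb} \geq 0$; the nonnegativity of $w_{\ab}$ uses precisely the closed form $\underline{l}'_{\kb} = -\max_{\ab\leq\kb}\sum_j|b_{\ab}^{(s_j)}|$ from Proposition~\ref{bernth}. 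This exhibits $l' - \underline{l}'_{\kb} \in \mathcal{H}_{kn+1}(\K)$ while respecting the sparsity pattern (each term couples $\x$ with a single $e_j$), whence $\underline{l}'_{kn+1} \geq \underline{l}'_{\kb}$. Your sketch leaves both the sparsity-compatibility of the certificate and the nonnegativity of the constant term unaddressed.
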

\begin{proof}
Let us start with the right inequality from~\eqref{eq:complexityl}. We make change-of-variables by noting that 
\begin{align*}
\underline{l}' & = \min_{(\x,\e) \in \K} \sum_{j=1}^m s_j(\x) e_j \\
& = \min_{(\x,\e) \in [0, 1]^{n+m}} \sum_{j=1}^m s_j(\x) (2 e_j+1) \,.
\end{align*}
Let $s_0(\x) := \sum_{j=1}^m - s_j(\x)$. For all $j=0,\dots,m$, we note $C(k,s_j) := \frac{L(s_j)}{k} \binom{d+1}{3} n^d$.
As in~\cite[(3.5)]{deKlerk10Error}, we obtain the following decomposition for all $j=0,\dots,m$:
\begin{align}
\label{eq:decompsj}
s_j = \sum_{\ab \leq \kb} s_j(\frac{\ab}{k}) \mathbf{B}_{\kb,\ab} + h_j - C(k,s_j) \,,
\end{align}
where each polynomial $h_j$ belongs to $\mathcal{H}_{k n}(\X)$ and has nonnegative Bernstein coefficients in the basis $(\mathbf{B}_{\kb,\ab})_{\ab \leq \kb}$. Therefore, the Bernstein coefficients of the polynomials involved in~\eqref{eq:decompsj} satisfy the following, for all $\ab \leq \kb$ and for each $j=0,\dots,m$: 
\[
b_{\ab}^{(s_j)} = s_j(\frac{\ab}{k}) +  b_{\ab}^{(h_j)} - C(k,s_j) \geq s_j(\frac{\ab}{k}) - C(k,s_j) \,.
\]
So, for all $j=1,\dots,m$, for all $e_j \in [0, 1]$ and for all $\ab \leq \kb$, one has $b_{\ab}^{(s_j)} e_j \geq s_j(\frac{\ab}{k}) e_j - C(k,s_j)$.
For all $\ab \leq \kb$ and for all $\e \in [0, 1]^m$, we obtain:
\begin{align*}
b_{\ab}^{(l_\eb')} & = \sum_{j=1}^m 2 b_{\ab}^{(s_j)} e_j + b_{\ab}^{(s_0)} \\
& \geq  \sum_{j=1}^m 2 [s_j(\frac{\ab}{k}) e_j - C(k,s_j)] + s_0(\frac{\ab}{k}) - C(k,s_0) \\
& = l'(\frac{\ab}{k}) - 2 \sum_{j=1}^m C(k,s_j) -C(k,s_0) \\
& \geq \underline{l}' - \frac{3 L_m}{k} \binom{d+1}{3} n^d  \,.
\end{align*}
Since $\underline{l}_{\kb}' = \min_{\e \in [0, 1]^n} \min_{\ab \leq \kb} b_{\ab}^{(l_\eb')} $, this implies the right inequality from~\eqref{eq:complexityl}. 
%

In the remainder of the proof, we emphasize that the variable $\e$ lies in $[-1,1]^m$, so that $(\x,\e) \in \K$. To prove the left inequality from~\eqref{eq:complexityl}, we show that  $l'- \underline{l}'_{\kb}$ has a sparse Krivine-Stengle representation in $\mathcal{H}_{kn+1}(\K)$. Note that $\underline{l}' = \sum_{\ab \leq \kb} \underline{l}' \, \mathbf{B}_{\kb,\ab}$, thus we obtain the following decomposition in the Bernstein basis, for all $(\x,\e) \in \K$:
\begin{align}
\label{eq:secondKS}
l'(\x,\e) - \underline{l}'_{\kb} =  \sum_{\ab \leq \kb} [ \sum_{j=1}^m  b_{\ab}^{(s_j)} \, e_j - \underline{l}'] \, \mathbf{B}_{\kb,\ab}(\x) \,.
\end{align}
We now prove the existence of nonnegative scalars $(u_{\ab,j})_{\ab \leq \kb}$, $(v_{\ab,j})_{\ab \leq \kb}$ and $w_{\ab}$ such that
\[
\sum_{j=1}^m b_{\ab}^{(s_j)}  \, e_j - \underline{l}' = \sum_{j=1}^m u_{\ab,j} \frac{1-e_j}{2} +  \sum_{j=1}^m v_{\ab,j} \frac{1+e_j}{2} + w_{\ab} \,,
\]
which together with~\eqref{eq:secondKS} implies that $l' - \underline{l}'_{\kb} \in \mathcal{H}_{kn+1}(\K)$. 
For this, let us choose $u_{\ab,j} := |b_{\ab}^{(s_j)}| - b_{\ab}^{(s_j)}$, $v_{\ab,j} := |b_{\ab}^{(s_j)}| + b_{\ab}^{(s_j)}$ and $w_{\ab} := - \sum_{j=1}^m |b_{\ab}^{(s_j)}| - \underline{l}'_{\kb}$, so that the above equality holds. Since $|b_{\ab}^{(s_j)}| \geq b_{\ab}^{(s_j)}$ and $|b_{\ab}^{(s_j)}| \geq - b_{\ab}^{(s_j)}$, one has $u_{\ab,j} \geq 0$ and $v_{\ab,j} \geq 0$, respectively. Eventually, $- \sum_{j=1}^m |b_{\ab}^{(s_j)}| + \max_{\ab \leq \kb} \sum_{j=1}^m |b_{\ab}^{(s_j)}| \geq 0 $, which shows that $w_{\ab} \geq 0$. \\
Hence, we proved that $l' - \underline{l}'_{\kb}$ has a sparse Krivine-Stengle representation in $\mathcal{H}_{kn+1}(\K)$. Since $\underline{l}'_{kn+1} = \max \{t : l' - t \in \mathcal{H}_{kn+1}(\K) \}$, we obtain $\underline{l}'_{kn+1} \geq \underline{l}'_{\kb}$, the desired result. 

\if{
Let $C := 2 \sum_{j=1}^m C(k,s_j) + C(k,s_0) = \frac{3 L_m}{k} \binom{d+1}{3} n^d$. To prove the second inequality, we show that $l' - \underline{l}' + C$ has a sparse Krivine-Stengle representation in $\mathcal{H}_{kn+1}(\K)$. For all $s \in \R[\x]$, the Bernstein approximation of $s$ is given by $B_{\kb}(s) := \sum_{\ab \leq \kb} s(\frac{\ab}{k}) \mathbf{B}_{\kb,\ab}$. Our strategy consists of writing: 
\[l' - \underline{l}' + C = [l' - \sum_{j=1}^m B_\kb(s_j) e_j + C] +  [\sum_{j=1}^m B_\kb(s_j) e_j - \underline{l}'] \,,
\]
then to show that this is the sum of two polynomials in $\mathcal{H}_{kn+1}(\K)$.\\
For the first polynomial, we combine~\eqref{eq:decompsj}  with a reverse change-of-variables to obtain 
\[l'(\x,\e) = \sum_{j=1}^m B_\kb(s_j) e_j + \sum_{j=1}^m 2 q_j(\x) \frac{1 + e_j}{2} + q_0(\x) + C \,,
\] 
for all $(\x,\e) \in \K$. Since $q_j \in \mathcal{H}_{k n}(\X)$, for all $j=0,\dots,m$, this implies that $l' - \sum_{j=1}^m B_\kb(s_j) e_j + C \in \mathcal{H}_{k n+1}(\K)$.\\
Next, we focus on the second polynomial $\sum_{j=1}^m B_\kb(s_j) e_j - \underline{l}'$. Note that  $B_\kb(s_j) e_j = \sum_{\ab \leq \kb} s_j(\frac{\ab}{k}) \, e_j \mathbf{B}_{\kb,\ab} $ and $\underline{l}' = \sum_{\ab \leq \kb} \underline{l}' \, \mathbf{B}_{\kb,\ab}$, yielding
\begin{align}
\label{eq:secondKS}
\sum_{j=1}^m B_\kb(s_j) e_j - \underline{l}' = \sum_{\ab \leq \kb} [ \sum_{j=1}^m s_j(\frac{\ab}{k}) \, e_j - \underline{l}'] \, \mathbf{B}_{\kb,\ab} \,.
\end{align}
We now prove the existence of nonnegative scalars $(u_{\ab,j})_{\ab \leq \kb}$, $(v_{\ab,j})_{\ab \leq \kb}$ and $w_{\ab}$ such that
\[
\sum_{j=1}^m s_j(\frac{\ab}{k}) \, e_j - \underline{l}' = \sum_{j=1}^m u_{\ab,j} \frac{1-e_j}{2} +  \sum_{j=1}^m v_{\ab,j} \frac{1+e_j}{2} + w_{\ab} \,,
\]
which together with~\eqref{eq:secondKS} implies that $\sum_{j=1}^m B_\kb(s_j) e_j - \underline{l}' \in \mathcal{H}_{kn+1}(\K)$. 
For this, let us choose $u_{\ab,j} := |s_j(\frac{\ab}{k})| - s_j(\frac{\ab}{k})$, $v_{\ab,j} := |s_j(\frac{\ab}{k})| + s_j(\frac{\ab}{k})$ and $w_{\ab} := - \sum_{j=1}^m |s_j(\frac{\ab}{k})| - \underline{l}'$, so that the above equality holds. Since $|s_j(\frac{\ab}{k})| \geq s_j(\frac{\ab}{k}), - s_j(\frac{\ab}{k})$, one has $u_{\ab,j}, v_{\ab,j} \geq 0$. Eventually, $- \sum_{j=1}^m |s_j(\frac{\ab}{k})|$ is the value of $l'$ obtained by taking $e_j = -\sgn{(s_j(\frac{\ab}{k}))}$ for all $j=1,\dots,m$, which shows that $w_{\ab} \geq 0$. \\
Hence, we proved that $l' - \underline{l}' + C $ has a sparse Krivine-Stengle representation in $\mathcal{H}_{kn+1}(\K)$. Since $\underline{l}'_{kn+1} = \max \{t : l' - t \in \mathcal{H}_{kn+1}(\K) \}$, we obtain $\underline{l}'_{kn+1} \geq \underline{l}' - C$, the desired result. 
}\fi
\end{proof}
\begin{remark}
Theorem~\ref{th:lcomplexity} provides convergence rates when the degree of approximation $k$ goes to infinity. Note that $L_m$ is linear in the number of roundoff error variables $m$. Hence, the value of $k$ required to get a $\delta$-approximation of $\underline{l}$ is linear in $\frac{1}{\delta}$ and $m$, polynomial in the number of input variables $n$ and exponential in the degree $d$. \\
By using Remark~\ref{rk:cost1}, the number of Bernstein coefficients of multi-degree less than  $\kb = (k,\dots,k)$ mandatory to compute this $\delta$-approximation is linear in $m$ and proportional to $(k+1)^{n}$. Similarly, by using Remark~\ref{rk:cost2}, the size of the LP~\eqref{sparsetheq} is linear in $m$ and proportional to $\binom{2(n+1)+k}{k}$. Thanks to the following lower bound:
\begin{align*}
\binom{2(n+1)+k}{k} &= \frac{[2(n+1)+k] \cdots [k+1]}{[2(n+1)]!}  \\
& = (1+\frac{k}{2n+1})(1+\frac{k}{2n})\cdots(1+k) \\ & \leq k^{2n} (1+k) \leq 2 k^{2n+1} \,,
\end{align*} 
for all $k \geq 2$, we conclude that the size of the LP relaxations to compute Krivine-Stengle representations has the same order of magnitude that the number of Bernstein coefficients. Modern LP solvers rely on interior-point methods with polynomial-time 
complexity in the LP size (see e.g.~\cite{NN94}). 
The overall theoretical arithmetic cost of both algorithms is polynomial in $m$ and exponential to $n$ and $d$. 

However in practice, the degree $k$ is fixed for the sake of efficiency. In this case, one can write   $\binom{2(n+1)+k}{k} \leq 2 (2n+1)^k$ and the size of LP relaxations is polynomial in $n$.
Therefore, the computational cost at fixed $k$ is exponential in $n$ for Bernstein expansions and polynomial in $n$ for Krivine-Stengle representations.
\end{remark}}
\section{Implementation \& Results}
\label{implementation}
\subsection{The  \texttt{FPBern} and \texttt{FPKriSten} software packages}
 We provide two distinct software packages to compute certified error bounds of roundoff errors  for programs implementing  polynomial functions with floating point precision. 
The first tool \href{https://github.com/roccaa/FPBern}{\texttt{FPBern}} relies on the method from Section \ref{contributions_bernstein} and the second tool \href{https://github.com/roccaa/FPKriSten}{\texttt{FPKriSten}} on the method from Section \ref{contributions_handelman}.\newline
\texttt{FPBern} is built on top of the \new{\texttt{C++}} software presented in \cite{dreossiHSCC} to manipulate Bernstein expansions. \new{\texttt{FPBern}} includes two modules: \texttt{FPBern(a)} and \texttt{FPBern(b)}. 
Their main difference is that Bernstein coefficients are computed with double precision floating point arithmetic in \texttt{FPBern(a)} and with rational arithmetic in \texttt{FPBern(b)}. 
Polynomial operations \new{and rational arithmetic operations }are handled with {\sc GiNaC}~\cite{ginac}. 
%
$\fpkristen$ is built on top of the {\sc SBSOS} software related to~\cite{sbsos} which handles sparse polynomial optimization problems by solving a hierarchy of convex relaxations. \new{This hierarchy is obtained by mixing Krivine-Stengle  and Putinar representations of positive polynomials.  
To improve the overall performance in our particular case, we only consider the former representation yielding the hierarchy of LP relaxations~\eqref{sparsetheq}.} Among several LP solvers,  {\sc Cplex}~\cite{cplex} yields the best performance in our case (see also~\cite{lp_compare} for more comparisons). Polynomials are handled with the {\sc Yalmip} toolbox~\cite{YALMIP} available within \texttt{Matlab}. Even though the semantics of programs considered in this paper is actually much simpler than that considered by other tools such as \texttt{Rosa}~\cite{rosa} or {\sc Fluctuat}~\cite{fluctuat}, we emphasize that those tools may be combined with external non-linear solvers to solve specific sub-problems, a task that either \texttt{FPBern} or \texttt{FPKriSten} can fulfill.
%
\subsection{Experimental results}
We tested our two software packages with \new{$35$} programs \new{(see Appendix~\ref{appendix})} where \new{$27$} are existing benchmarks coming from biology, space control and optimization fields, and $8$ are generated as follows, with $\xb = (x_1,\dots,x_n) \in [-1,1]^n$.
\begin{equation}
	\texttt{ex-n-nSum-deg}(\xb) := \sum_{j=0}^{\texttt{nSum}}(\prod_{k=1}^{\texttt{deg}}(\sum_{i=1}^{\texttt{n}}x_i)) \,.
\end{equation}
The first $9$ \new{and the last $15$ programs }are used  \new{for similar comparison} in~\cite[Section 4.1]{real2float}. \new{Additionally, $3$ benchmarks }come from~\cite{SolovyevH13}. The 8 generated benchmarks allow evaluating \emph{independently} the performance of the tools w.r.t.~either the number of input variables (through the variable \texttt{n}), the degree (through \texttt{deg}) or the number of error variables (through \texttt{nSum}). 
Taking $\xb \in [-1,1]^n$ allows avoiding monotonicity of the polynomial \new{(which could be exploited by the Bernstein techniques)}.\\
We recall that each program implements a polynomial function $f(\xb)$ with box constrained  input variables. To provide an upper bound of the absolute roundoff error $| f(\xb)-\hat{f}(\xb,\eb) | = | l(\xb,\eb) + h(\xb,\eb) |$, we rely on~\texttt{Real2Float} to generate  $l$ and to bound $h$ (see~\cite[Section 3.1]{real2float}). Then the optimization methods of Section~\ref{contributions} are applied to bound a function $l'$, obtained after linear transformation of $l$, over the unit box. \newline
At a given multi-degree $\kb$, Algorithm~$\fpbern$ computes the  bound $\overline{l}'_{\kb}$ (\new{see Figure~\ref{alg:fpbern}}). Similarly, at a given relaxation order $k$, Algorithm~$\fpkristen$ computes the bounds $\underline{l}'_k$ and $\underline{l}'_k$ (\new{see Figure~\ref{alg:fpkristen}}). 
To achieve fast computations,  the default value of $\kb$ is the multi-degree $\db$ of $l'_{\eb}$ (equal to the multi-degree of the input polynomial $f$) and the default value of $k$ is the degree $d+1$ of $l'$ (equal to the successor of the degree of $f$).\newline
\new{All the experiments, with the exception to \texttt{floudas2-6}, }were carried out on an Intel Core i7-5600U (2.60Ghz, 16GB) with Ubuntu 14.04LTS, \texttt{Matlab 2015a}, {\sc GiNaC}~1.7.1, and {\sc Cplex}~12.63. \new{The execution of \texttt{floudas2-6} was performed on a different setting as it required 28GB of memory. For this reason its associated performance appears in \textit{italic} in Table~\ref{table:cpu}. }
Our benchmark settings are similar to~\cite[Section 4]{real2float} as we compare the accuracy and execution times of our two tools with \texttt{Rosa real compiler}~\cite{rosa} (version from May 2014), \texttt{Real2Float}~\cite{real2float} (version from July 2016) and \texttt{FPTaylor}~\cite{fptaylor} (version from May 2016) on programs implemented in double precision while considering input variables as real variables. All these tools use a simple rounding model (see Section~\ref{preliminaries_floating_point}) and were executed in this experiment with their default parameters.\newline
\begin{table*}[!t]
\begin{center}
\small
\caption{Comparison results of upper bounds for absolute roundoff errors. 
The best results are emphasized using \textbf{bold fonts}.\label{table:error}}{

\new{
\begin{tabular}{lccc|ccc|ccc}
\hline
{Benchmark} &  $n$ & $m$ &  $d$ & {\texttt{FPBern}(a)}  & {\texttt{FPBern}(b)}  & {\texttt{FPKriSten}}  & {\texttt{Real2Float}} & {\texttt{Rosa}}  & {\texttt{FPTaylor}} \\
\hline  
\multicolumn{10}{c}{{Programs implementing polynomial functions with variables in boxes}} \\                
\hline
{\texttt{rigidBody1}}& 3 & 10 & 3
& $5.33\text{e--}13$ & $5.33\text{e--}13$ & $5.33\text{e--}13$ & $5.33\text{e--}13$ & $5.08\text{e--}13$ & $\mathbf{3.87\textbf{e--}13}$   \\
{\texttt{rigidBody2}}& 3 & 15 & 5
& $6.48\text{e--}11$ & $6.48\text{e--}11$ & $6.48\text{e--}11$ & $6.48\text{e--}11$ & $6.48\text{e--}11$ & $\mathbf{5.24\textbf{e--}11}$  \\
{\texttt{kepler0}}& 6 & 21 & 3 
& ${1.08\text{e--}13}$ & ${1.08\text{e--}13}$ & ${1.08\text{e--}13}$ & ${1.18\text{e--}13}$ & $1.16\text{e--}13$ & $\mathbf{1.05\textbf{e--}13}$   \\
{\texttt{kepler1}}& 4 & 28 & 4  
& $\mathbf{4.23\textbf{e--}13}$ & $\mathbf{4.23\textbf{e--}13}$ & $\mathbf{4.23\textbf{e--}13}$ & $4.47\text{e--}13$ & $6.49\text{e--}13$ & ${4.49\text{e--}13}$    \\
{\texttt{kepler2}}& 6 & 42 & 4
& $\mathbf{2.03\textbf{e--}12}$ & $\mathbf{2.03\textbf{e--}12}$ & $\mathbf{2.03\textbf{e--}12}$ & $2.09\text{e--}12$ & $2.89\text{e--}12$ & ${2.10\text{e--}12}$   \\
{\texttt{sineTaylor}} &  1 & 13 & 8
& $\mathbf{5.51\textbf{e--}16}$ & $\mathbf{5.51\textbf{e--}16}$ & $\mathbf{5.51\textbf{e--}16}$ & $6.03\text{e--}16$ & $9.56\text{e--}16$ & $6.75\text{e--}16$  \\
{\texttt{sineOrder3}}& 1 & 6 & 4
& $1.35\text{e--}15$ & $1.35\text{e--}15$ & $1.25\text{e--}15$ & $1.19\text{e--}15$ & $1.11\text{e--}15$ & $\mathbf{9.97\textbf{e--}16}$   \\
{\texttt{sqroot}} & 1 & 15 & 5
& $1.29\text{e--}15$ & $1.29\text{e--}15$ & $1.29\text{e--}15$ & $1.29\text{e--}15$ & $8.41\text{e--}16$ & $\mathbf{7.13\textbf{e--}16}$  \\
{\texttt{himmilbeau}}& 2 & 11 & 5
& ${2.00\text{e--}12}$ & ${2.00\text{e--}12}$ & ${1.97\text{e--}12}$ & ${1.43\text{e--}12}$ & ${1.43\text{e--}12}$ & $\mathbf{1.32\textbf{e--}12}$    \\
{\texttt{schwefel}}& 3 & 15 & 5 
& ${1.48\text{e--}11}$ & ${1.48\text{e--}11}$ & ${1.48\text{e--}11}$ & ${1.49\text{e--}11}$ & ${1.49\text{e--}11}$ & $\mathbf{1.03\textbf{e--}11}$    \\
{\texttt{magnetism}} & 7 & 27 & 3 
& ${1.27\text{e--}14}$ & ${1.27\text{e--}14}$ & ${1.27\text{e--}14}$ & ${1.27\text{e--}14}$ & ${1.27\text{e--}14}$ & $\mathbf{7.61\textbf{e--}15}$    \\
{\texttt{caprasse}}& 4 & 34 & 5
& ${4.49\text{e--}15}$ & ${4.49\text{e--}15}$ & ${4.49\text{e--}15}$ & ${5.63\text{e--}15}$ & ${5.96\text{e--}15}$ & $\mathbf{3.04\textbf{e--}15}$    \\
\hline
\hline
{\texttt{ex-2-2-5}}& 2 & 9 & 3
& ${2.23\text{e--}14}$ & ${2.23\text{e--}14}$ & ${2.23\text{e--}14}$  & ${2.23\text{e--}14}$ & ${2.23\text{e--}14}$ & $\mathbf{1.96\textbf{e--}14}$   \\
{\texttt{ex-2-2-10}}& 2 & 14 & 3
& ${5.33\text{e--}14}$ & ${5.33\text{e--}14}$ & ${5.33\text{e--}14}$ & ${5.33\text{e--}15}$ & ${5.33\text{e--}14}$ & $\mathbf{4.85\textbf{e--}14}$  \\
{\texttt{ex-2-2-15}}& 2 & 19 & 3
& ${9.55\text{e--}14}$ & ${9.55\text{e--}14}$ & ${9.55\text{e--}14}$ & ${9.55\text{e--}14}$ & ${9.55\text{e--}14}$ & $\mathbf{8.84\textbf{e--}14}$   \\
{\texttt{ex-2-2-20}}& 2 & 24 & 3
& ${1.49\text{e--}13}$ & ${1.49\text{e--}13}$ & ${1.49\text{e--}13}$ & $\texttt{TIMEOUT}$ &  ${1.49\text{e--}13}$ & $\mathbf{1.40\textbf{e--}13}$    \\
{\texttt{ex-2-5-2}}& 2 & 9 & 6
& ${1.67\text{e--}13}$ & ${1.67\text{e--}13}$ & ${1.67\text{e--}13}$ & ${1.67\text{e--}13}$ & ${1.67\text{e--}13}$ & $\mathbf{1.41\textbf{e--}13}$    \\
{\texttt{ex-2-10-2}}& 2 & 14 & 11
& ${1.05\text{e--}11}$ & ${1.05\text{e--}11}$ & ${1.34\text{e--}11}$ & ${1.05\text{e--}11}$ & ${1.05\text{e--}11}$ & $\mathbf{8.76\textbf{e--}12}$    \\
{\texttt{ex-5-2-2}}& 5 & 12 & 3
& ${8.55\text{e--}14}$ & ${8.55\text{e--}14}$ & ${8.55\text{e--}14}$ & ${8.55\text{e--}14}$ & ${8.55\text{e--}14}$ & $\mathbf{7.72\textbf{e--}14}$    \\
{\texttt{ex-10-2-2}}& 10 & 22 & 3
& ${5.16\text{e--}13}$ & ${5.16\text{e--}13}$ & ${5.16\text{e--}13}$ & ${5.16\text{e--}13}$ & ${5.16\text{e--}13}$ & $\mathbf{4.82\textbf{e--}13}$    \\
\hline
\multicolumn{10}{c}{{Programs implementing polynomial functions with variables in basic compact semialgebraic sets}} \\  
\hline
{\texttt{floudas2-6}}& 10 & 50 & 3 & $-$
& $-$ & $\mathbf{4.34\textbf{e--}13}$ & ${5.15\text{e--}13}$ & ${5.87\text{e--}13}$ & ${7.88\text{e--}13}$  \\
{\texttt{floudas3-3}}& 6 & 25 & 3 & $-$
& $-$ & $\mathbf{4.05\textbf{e--}13}$ & ${5.81\text{e--}13}$ & $\mathbf{4.05\textbf{e--}13}$ & ${5.76\text{e--}13}$  \\
{\texttt{floudas3-4}}& 3 & 7 & 3 & $-$
& $-$ & ${2.67\text{e--}15}$ & ${2.78\text{e--}15}$ & ${2.56\text{e--}15}$ & $\mathbf{2.23\textbf{e--}15}$  \\
{\texttt{floudas4-6}}& 2 & 4 & 3 & $-$
& $-$ & ${1.89\text{e--}15}$ & ${1.82\text{e--}15}$ & ${1.33\text{e--}15}$ & $\mathbf{1.23\textbf{e--}15}$  \\
{\texttt{floudas4-7}}& 2 & 8 & 3 & $-$
& $-$ & ${2.07\text{e--}14}$ & $\mathbf{1.06\textbf{e--}14}$ & ${1.31\text{e--}14}$ & ${1.80\text{e--}14}$ \\
\hline
\multicolumn{10}{c}{{Programs implementing rational functions with variables in boxes}} \\  
\hline
{\texttt{doppler1}}& 3 & 11 & 3 &  ${1.65\text{e--}13}$
& ${1.65\text{e--}13}$ & $-$ & ${7.65\text{e--}12}$ & ${4.92\text{e--}13}$ & $\mathbf{1.59\textbf{e--}13}$  \\
{\texttt{doppler2}}& 3 & 11 & 3 &  ${3.14\text{e--}13}$
& ${3.14\text{e--}13}$ & $-$ & ${1.57\text{e--}11}$ & ${1.29\text{e--}12}$ & $\mathbf{2.90\textbf{e--}13}$  \\
{\texttt{doppler3}}& 3 & 11 & 3 &  $\mathbf{8.14\textbf{e--}14}$
& $\mathbf{8.14\textbf{e--}14}$ & $-$ & ${8.55\text{e--}12}$ & ${2.03\text{e--}13}$ & ${8.22\text{e--}14}$  \\
{\texttt{verhulst}}& 1 & 5 & 5 &  ${4.40\text{e--}16}$
& ${4.40\text{e--}16}$ & $-$ & ${4.67\text{e--}16}$ & ${6.82\text{e--}16}$ & $\mathbf{3.53\textbf{e--}16}$  \\
{\texttt{carbonGas}}& 1 & 11 & 4 &  ${1.42\text{e--}08}$
& ${1.42\text{e--}08}$ & $-$ & ${2.21\text{e--}08}$ & ${4.64\text{e--}08}$ & $\mathbf{1.23\textbf{e--}08}$ \\ 
{\texttt{predPrey}}& 1 & 7 & 10 &  ${2.32\text{e--}16}$
& ${2.32\text{e--}16}$ & $-$ & ${2.52\text{e--}16}$ & ${2.94\text{e--}16}$ & $\mathbf{1.89\textbf{e--}16}$ \\ 
{\texttt{turbine1}}& 3 & 17 & 4 &  ${7.75\text{e--}14}$
& ${7.75\text{e--}14}$ & $-$ & ${2.45\text{e--}11}$ & ${1.25\text{e--}13}$ & $\mathbf{2.33\textbf{e--}14}$ \\ 
{\texttt{turbine2}}& 3 & 13 & 2 &  ${1.16\text{e--}13}$
& ${1.16\text{e--}13}$ & $-$ & ${2.08\text{e--}12}$ & ${1.76\text{e--}13}$ & $\mathbf{3.14\textbf{e--}14}$ \\ 
{\texttt{turbine3}}& 3 & 17 & 4 &  ${5.36\text{e--}14}$
& ${5.36\text{e--}14}$ & $-$ & ${1.71\text{e--}11}$ & ${8.50\text{e--}14}$ & $\mathbf{1.70\textbf{e--}14}$ \\ 
{\texttt{jet}}& 2 & 24 & 8 &  ${2.73\text{e--}09}$
& ${2.73\text{e--}09}$ & $-$ & $\text{OoM}$ & ${1.62\text{e--}08}$ & $\mathbf{1.50\textbf{e--}11}$ \\
\hline
\end{tabular}
}}
\end{center}
\end{table*}

Table \ref{table:error}  shows the result of the absolute roundoff error while Table~\ref{table:cpu} displays execution times obtained through averaging over $5$ runs. For each benchmark, we indicate the number $n$ (resp.~$m$) of input (resp.~error) variables as well as the degree $d$ of $l'$. For \texttt{FPKriSten} the {\sc Cplex} solving time in Table~\ref{table:cpu} is given between parentheses. Note that the overall efficiency of the tool could be improved by constructing the hierarchy of LP~\eqref{sparsetheq} with a \texttt{C++} implementation.

Our two methods yield more accurate bounds for \new{$3$  benchmarks implementing polynomial function with input variables in boxes:} \texttt{kepler1}, \texttt{sineTaylor} and \texttt{kepler2} which is  the program involving the largest number of error variables.\\
\new{For \texttt{kepler1}, \texttt{FPBern} and \texttt{FPKriSten} are $6\%$ more accurate than \texttt{Real2Float} and \texttt{FPTaylor}, and  $53\%$ more accurate than \texttt{Rosa}.}
For \texttt{kepler2}, our two tools are $3\%$ (resp.~$42\%$) more accurate than \texttt{FPTaylor} and \texttt{Real2Float} (resp.~\texttt{Rosa}). 
In addition, Property~\ref{sharp_prop} holds for these three programs with \texttt{FPBern}, which ensures bound optimality. 
For all other benchmarks \texttt{FPTaylor} provides the most accurate upper bounds.  
Our tools are more accurate than \texttt{Real2Float} except for \texttt{sineOrder3} and \texttt{himmilbeau}. 
In particular, for \texttt{himmilbeau}, \texttt{FPBern} and \texttt{FPKriSten} are $40\%$ (resp.~$50\%$) less accurate than  \texttt{Real2Float} (resp.~\texttt{FPTaylor}). 
One way to obtain better bounds would be to increase the degree $\kb$ (resp.~ relaxation order $k$) within \texttt{FPBern} (resp.~\texttt{FPKriSten}). 
Preliminary experiments indicate modest accuracy improvement at the expense of performance. 
We refer to Section~\ref{complexity} for theoretical results on the convergence rates of both methods.

\begin{table*}[!t]
\begin{center}
\small
\caption{Comparison of execution times (in seconds) for absolute roundoff error bounds. For \texttt{FPKriSten} the {\sc Cplex} solving time is given between parentheses.
For each model, the best results are emphasized using \textbf{bold fonts}.
\label{table:cpu}}{

\new{
\begin{tabular}{lccc|ccc|ccc}
\hline
{Benchmark} &  $n$ & $m$ & $d$ & {\texttt{FPBern}(a)}  & {\texttt{FPBern}(b)}  & {\texttt{FPKriSten}}  & {\texttt{Real2Float}} & {\texttt{Rosa}}  & {\texttt{FPTaylor}} \\
\hline
\multicolumn{10}{c}{{Programs implementing polynomial functions with variables in boxes}} \\                
\hline
{\texttt{rigidBody1}}& 3 & 10 & 3  &  $\mathbf{3\textbf{e--}4}$
& $5\text{e--}4$ & $0.24(0.03)$ & $0.58$ & ${0.13}$ & $1.84$  \\
{\texttt{rigidBody2}}& 3 & 15 & 5 & $\mathbf{1\textbf{e--}3}$
& $\mathbf{1\textbf{e--}3}$ & $2.75(0.40)$ &${0.26}$ & $2.17$ & $3.01$  \\
{\texttt{kepler0}}& 6 & 21 & 3 & $\mathbf{3\textbf{e--}3}$
& $1\text{e--}2$ & $1.64(0.11)$ & ${0.22}$ & $3.78$ & $4.93$ \\
{\texttt{kepler1}}& 4 & 28 & 4  & $\mathbf{5\textbf{e--}3}$
& $1\text{e--}2$ & ${3.88(0.48)}$ & $17.6$ & $63.1$ & $9.33$  \\
{\texttt{kepler2}}& 6 & 42 & 4  & $\mathbf{4\textbf{e--}2}$
& $0.44$ & $18.8(2.4)$ & ${16.5}$ & $106$ & $19.1$  \\
{\texttt{sineTaylor}} &  1 & 13 & 8  & $\mathbf{5\textbf{e--}4}$
& $2\text{e--}3$ & $0.86(0.19)$ & $1.05$ & $3.50$ & $2.91$  \\
{\texttt{sineOrder3}}& 1 & 6 & 4  & $\mathbf{1\textbf{e--}4}$
& $\mathbf{1\textbf{e--}4}$ & ${0.08(0.02)}$ & $0.40$ & $0.48$ & $1.90$ \\
{\texttt{sqroot}} & 1 & 15 & 5  & $\mathbf{2\textbf{e--}4}$
& $\mathbf{2\textbf{e--}4}$ & $0.26(0.05)$ & ${0.14}$ & $0.77$ & $2.70$  \\
{\texttt{himmilbeau}}& 2 & 11 & 5  & $\mathbf{1\textbf{e--}3}$
& $2\text{e--}3$ & $0.66(0.09)$ & ${0.20}$ & $2.51$ & $3.28$ \\
{\texttt{schwefel}}& 3 & 15 & 5  & $\mathbf{2\textbf{e--}3}$
& $3\text{e--}3$ & ${2.84(0.53)}$ & ${0.23}$ & $3.91$ & $0.53$ \\
{\texttt{magnetism}} & 7 & 27 & 3  & $\mathbf{6\textbf{e--}2}$
& $2.10$ & $2.99(0.18)$ & ${0.29}$ & $1.95$ & $5.91$  \\
{\texttt{caprasse}}& 4 & 34 & 5 & $\mathbf{6\textbf{e--}3}$
& $1\text{e--}2$ & $17.2(3.62)$ & ${3.63}$ & $17.6$ & $12.2$ \\
\hline
\hline
{\texttt{ex-2-2-5}}& 2 & 9 & 3 &  $\mathbf{3\textbf{e--}4}$
& $\mathbf{3\textbf{e--}4}$ & $0.13(0.02)$ & $0.07$ & $4.20$ & $2.30$  \\
{\texttt{ex-2-2-10}}& 2 & 14 & 3 &  $\mathbf{4\textbf{e--}4}$
& $\mathbf{4\textbf{e--}4}$ & $0.18(0.02)$ & $0.35$ & $4.75$ & $3.42$  \\
{\texttt{ex-2-2-15}}& 2 & 19 & 3 &  $\mathbf{5\textbf{e--}4}$
& $\mathbf{5\textbf{e--}4}$ & $0.24(0.03)$ & $9.75$ & $5.33$ & $4.91$  \\
{\texttt{ex-2-2-20}}& 2 & 24 & 3 &  $\mathbf{5\textbf{e--}4}$
& $8\text{e--}4$ & $0.30(0.03)$ & $\texttt{TIMEOUT}$ & $6.28$ & $6.27$  \\
{\texttt{ex-2-5-2}}& 2 & 9 & 6 &  $\mathbf{2\textbf{e--}3}$
& $3\text{e--}3$ & $1.08(0.14)$ & $0.27$ & $4.26$ & $2.53$  \\
{\texttt{ex-2-10-2}}& 2 & 14 & 11 &  $\mathbf{2\textbf{e--}2}$
& $4\text{e--}2$ & $90.1(53.1)$ & $49.2$ & $9.37$ & $5.07$  \\
{\texttt{ex-5-2-2}}& 5 & 12 & 3 &  $\mathbf{7\textbf{e--}3}$
& $4\text{e--}2$ & $0.63(0.05)$ & $0.21$ & $4.45$ & $12.3$  \\
{\texttt{ex-10-2-2}}& 10 & 22 & 3 &  $\mathbf{2.48}$
& $1242$ & $5.5(0.3)$ & $30.7$ & ${5.34}$ & $34.6$  \\
\hline
\multicolumn{10}{c}{{Programs implementing polynomial functions with variables in basic compact semialgebraic sets}} \\  
\hline
{\texttt{floudas2-6}}& 10 & 50 & 3 & $-$
& $-$ & $\mathit{142(25.2)}$ & $\mathbf{2.49}$ & $159$ & $15.9$  \\
{\texttt{floudas3-3}}& 6 & 25 & 3 & $-$
& $-$ & $15.2(1.24)$ & $\mathbf{0.45}$ & $13.9$ & $5.64$  \\
{\texttt{floudas3-4}}& 3 & 7 & 3 & $-$
& $-$ & $0.14(0.02)$ & $\mathbf{0.09}$ & $0.49$ & $1.47$  \\
{\texttt{floudas4-6}}& 2 & 4 & 3 & $-$
& $-$ & $0.08(0.02)$ & $\mathbf{0.07}$ & $1.20$ & $0.91$  \\
{\texttt{floudas4-7}}& 2 & 8 & 3 & $-$
& $-$ & $0.29(0.03)$ & $\mathbf{0.13}$ & $21.8$ & $1.64$ \\
\hline
\multicolumn{10}{c}{{Programs implementing rational functions with variables in boxes}} \\  
\hline
{\texttt{doppler1}}& 3 & 11 & 3 &  $\mathbf{8\textbf{e--}3}$
& $\mathbf{8\textbf{e--}3}$ & $-$ & $6.80$ & $6.35$ & $6.13$  \\
{\texttt{doppler2}}& 3 & 11 & 3 &  $\mathbf{6\textbf{e--}3}$
& $7\text{e--}3$ & $-$ & $6.96$ & $6.54$ & $6.88$  \\
{\texttt{doppler3}}& 3 & 11 & 3 &  $\mathbf{7\textbf{e--}3}$
& $\mathbf{7\textbf{e--}3}$ & $-$ & $6.84$ & $6.37$ & $9.13$  \\
{\texttt{verhulst}}& 1 & 5 & 5 &  $\mathbf{1\textbf{e--}3}$
& $2\text{e--}3$ & $-$ & $0.51$ & $1.36$ & $1.37$  \\
{\texttt{carbonGas}}& 1 & 11 & 4 &  $\mathbf{1\textbf{e--}3}$
& $\mathbf{1\textbf{e--}3}$ & $-$ & $0.83$ & $6.59$ & $3.73$ \\ 
{\texttt{predPrey}}& 1 & 7 & 10 &  $\mathbf{4\textbf{e--}3}$
& $5\text{e--}3$ & $-$ & $0.87$ & $4.12$ & $1.78$ \\ 
{\texttt{turbine1}}& 3 & 17 & 4 &  $\mathbf{4\textbf{e--}2}$
& $\mathbf{4\textbf{e--}2}$ & $-$ & $72.2$ & $3.09$ & $4.38$ \\ 
{\texttt{turbine2}}& 3 & 13 & 2 &  $\mathbf{1\textbf{e--}2}$
& $\mathbf{1\textbf{e--}2}$ & $-$ & $4.72$ & $7.75$ & $3.25$ \\ 
{\texttt{turbine3}}& 3 & 17 & 4 &  $\mathbf{4\textbf{e--}2}$
& $\mathbf{4\textbf{e--}2}$ & $-$ & $74.5$ & $4.57$ & $3.46$ \\ 
{\texttt{jet}}& 2 & 24 & 8 &  $\mathbf{3\textbf{e--}2}$
& $\mathbf{3\textbf{e--}2}$ & $-$ & $\text{OoM}$ & $125$ & $9.79$ \\
\hline
\end{tabular}
}}
\end{center}
\end{table*}
%


\texttt{FPBern(a)} is the fastest for all the benchmarks while having a similar accuracy to \texttt{Real2Float} or \texttt{Rosa}. 
\new{\texttt{FPBern(b)} has performance close to \texttt{FPBern(a)} with the exception of high dimensional benchmarks involving numerous rational arithmetic operations: \texttt{kepler2, magnetism } and the generated benchmark \texttt{ex-10-2-2}.}\\
%
%
The results obtained with the $8$ generated benchmarks emphasize the limitations of each method. The Bernstein method performs very well when the number of input variables is low, even if the degree increases, as shown in the results for the 6 programs from \texttt{ex-2-2-5} to \texttt{ex-2-10-2}. This is related to the polynomial dependency on the degree when fixing the number of input variables.
However, for the last $2$ programs \texttt{ex-5-2-2} and \texttt{ex-10-2-2} where the dimension increases, the computation time increases exponentially, and this especially visible on the \texttt{FPBern(b)}\new{ rational arithmetic implementation.} This confirms the theoretical result stated in~Remark~\ref{rk:cost1} as the number of Bernstein coefficients is exponential w.r.t.~the dimension at fixed degree.

On the same programs, the method based on Krivine-Stengle representations performs better when the dimension increases, at fixed degree. This confirms the constraint dependency on $[\frac{m k}{n+1}+1] \binom{n+k}{k}$ stated in Remark~\ref{rk:cost2}. 

Results for the $4$ programs from \texttt{ex-2-2-5} to \texttt{ex-2-2-20} also indicate that our methods are the least sensible to an increase of error variables.
We note that \texttt{FPKriSten} is often the second fastest tool.

\new{The $5$ benchmarks implementing polynomial functions with inputs variables in semialgebraic sets are only handled by \texttt{FPKriSten} in the current state. Our tool is the most accurate on the 2 benchmarks \texttt{floudas2-6} and \texttt{floudas3-3}.}
\new{For \texttt{floudas2-6}, \texttt{FPKriSten} is respectively $18\%$, $35\%$ and $81\%$ more accurate than  \texttt{Real2Float}, \texttt{Rosa} and \texttt{FPTaylor}. }
\new{For \texttt{floudas3-3}, our tool is as accurate as \texttt{Rosa} while being $43\%$ (resp.~$42\%$) more accurate than\texttt{Real2Float} (resp.~\texttt{FPTaylor}).}

\new{For benchmarks \texttt{floudas3-4} and \texttt{floudas4-6}, \texttt{FPTaylor} provides the best bounds while being $20\%$ and $54\%$ more accurate than \texttt{FPKriSten}. Finally, the most accurate tool on \texttt{floudas4-7} is \texttt{Real2Float}, being $95\%$ more accurate than our tool.}
\new{On these $5$ benchmarks, the tool with the best performance is \texttt{Real2Float}. However, with the exception of \texttt{floudas2-6}, \texttt{FPKriSten} performances are similar to \texttt{FPTaylor} and \texttt{Rosa}.}

\new{Finally we compare \texttt{FPBern} with \texttt{Real2Float}, \texttt{Rosa}, and \texttt{FPTaylor} on the $10$ benchmarks implementing rational functions. Both \texttt{FPBern(a)} and \texttt{FPBern(b)} demonstrate the best performance by a large margin on these benchmarks. We note that the performance of \texttt{FPBern(b)} (implemented in rational arithmetic) are similar to \texttt{FPBern(a)} (implemented in double precision). This can be explained by the fact that all related programs have a small number of input variables.}

\new{Our tool is the most precise on benchmark \texttt{doppler3}, being respectively $105$ and $2.5$ times more accurate than \texttt{Real2Float} and \texttt{Rosa}. 
Our bounds for \texttt{doppler3} are a slightly tighter than \texttt{FPTaylor} results, being $1\%$ more accurate. 
On the remaining $9$ benchmarks \texttt{FPBern} accuracy is the second best after \texttt{FPTaylor}. \texttt{FPBern} computes bounds closest to \texttt{FPTaylor} results on \texttt{doppler1} where \texttt{FPTaylor} is $3\%$ more accurate. Its worst accuracy, with regard to \texttt{FPTaylor} bounds, is on benchmark \texttt{jet} being $182$ times less accurate than \texttt{FPTaylor}. Overall, \texttt{FPBern} shows a very good trade-off between accuracy and performance on all $10$ benchmarks.}

Let us now provide an overall evaluation of our tools. Our tools are comparable with \texttt{Real2Float} (resp.~\texttt{Rosa}) in terms of accuracy and faster than them. In comparison with \texttt{FPTaylor}, our tools are in general less precise but still very competitive in accuracy, and they outperform \texttt{FPTaylor} in computation time. A salient advantage of our tools, in particular \texttt{FPKriSten}, over \texttt{FPTaylor} is a good trade-off between computation time and accuracy for large polynomials \new{and convex semialgebraic sets}. As we can see from the experimental results, for \texttt{ex-10-2-2}, \texttt{FPKriSten} takes only $6.11$s while \texttt{FPTaylor} takes $34.6$s for comparable precisions. Note that the experimentations were done \new{with \texttt{FPKriSten} implemented in (interpreted) \texttt{Matlab}}; \new{a }\texttt{C++} implementation of this method would allow a significant speed-up by decreasing the problem construction time, thus tightening the gap between solving time and overall time. \new{We also note that \texttt{FPBern(a)} and \texttt{FPBern(b)} achieve the same bounds for \emph{all} benchmarks.} 

\new{We emphasize that }the good time performances of our tools come from the exploitation of sparsity. \new{Indeed, a direct Bernstein expansion of the polynomial $l$ associated to \texttt{kepler2} leads to compute  $3^6 \times 2^{42}$ coefficients against $42 \times 3^6$ with \texttt{FPBern}. Similarly, dense Krivine-Stengle representations yield an LP with $\binom{100}{4} + 1 = 3 \ 921 \ 226$ variables while LP~\eqref{sparsetheq} involves $42 \binom{18}{4} + 1 = 128 \ 521$ variables.}

\section{Conclusion and Future Works}
\label{conclusion}
We propose two new methods to compute upper bounds of absolute roundoff errors occurring while
executing programs \new{involving polynomial or rational functions} with floating point precision. The first method uses Bernstein expansions of polynomials while the second one relies on a hierarchy of LP relaxations derived from sparse Krivine-Stengle representations. The overall computational cost is drastically reduced compared to the dense problem, thanks to a specific exploitation of the sparsity pattern between input and error variables, \new{yielding promising experimental results. We also provide a complexity analysis in the case of polynomial programs with box constrained variables. For both methods, this analysis allows to derive convergence rates towards the maximal value of the linear part of the roundoff error. There is a large gap between theorey and practice: the theoretical error bounds are exponential w.r.t.~the size of the programs, which is in deep contrast with the practical experiments providing tight error bounds very often}.

\new{While our second method allows to handle general polynomial programs with semialgebraic input sets, 
our first method is currently limited to programs implementing rational functions with box constrained variables. It would be worth adapting the techniques described in~\cite{Mantzaflaris2010} to obtain polygonal approximations of semialgebraic input sets.}
Next, we intend to aim at formal verification of bounds by interfacing either~\texttt{FPBern} with the PVS libraries~\cite{munoz13} related to Bernstein expansions, or \texttt{FPKirSten} with the $\texttt{Coq}$ libraries available in $\texttt{Real2Float}$~\cite{real2float}. Finally, a delicate but important open problem is to apply such optimization techniques in order to handle roundoff errors of programs implementing finite or infinite loops as well as conditional statements.
%
%
\ifCLASSOPTIONcompsoc
\fi
\bibliographystyle{IEEEtran}

\begin{thebibliography}{10}
\providecommand{\url}[1]{#1}
\csname url@samestyle\endcsname
\providecommand{\newblock}{\relax}
\providecommand{\bibinfo}[2]{#2}
\providecommand{\BIBentrySTDinterwordspacing}{\spaceskip=0pt\relax}
\providecommand{\BIBentryALTinterwordstretchfactor}{4}
\providecommand{\BIBentryALTinterwordspacing}{\spaceskip=\fontdimen2\font plus
\BIBentryALTinterwordstretchfactor\fontdimen3\font minus
  \fontdimen4\font\relax}
\providecommand{\BIBforeignlanguage}[2]{{%
\expandafter\ifx\csname l@#1\endcsname\relax
\typeout{** WARNING: IEEEtran.bst: No hyphenation pattern has been}%
\typeout{** loaded for the language `#1'. Using the pattern for}%
\typeout{** the default language instead.}%
\else
\language=\csname l@#1\endcsname
\fi
#2}}
\providecommand{\BIBdecl}{\relax}
\BIBdecl

\bibitem{laurent2009sums}
M.~Laurent, ``Sums of squares, moment matrices and optimization over
  polynomials,'' in \emph{Emerging applications of algebraic geometry}.\hskip
  1em plus 0.5em minus 0.4em\relax Springer, 2009, pp. 157--270.

\bibitem{fptaylor}
A.~Solovyev, C.~Jacobsen, Z.~Rakamari\'c, and G.~Gopalakrishnan, ``{Rigorous
  Estimation of Floating-Point Round-off Errors with Symbolic {Taylor}
  Expansions},'' in \emph{Formal Methods}, 2015.

\bibitem{rosa}
E.~Darulova and V.~Kuncak, ``{Towards a Compiler for Reals},'' EPFL, Tech.
  Rep., 2016.

\bibitem{real2float}
V.~Magron, G.~Constantinides, and A.~Donaldson, ``{Certified Roundoff Error
  Bounds Using Semidefinite Programming},'' \emph{ACM Trans. Math. Softw.},
  vol.~43, no.~4, pp. 1--34, 2017.

\bibitem{Constantinides}
D.~Boland and G.~A. Constantinides, ``Automated precision analysis: A
  polynomial algebraic approach,'' in \emph{FCCM'10}, 2010, pp. 157--164.

\bibitem{schweighofer06}
D.~Grimm, T.~Netzer, and M.~Schweighofer, ``A note on the representation of
  positive polynomials with structured sparsity,'' \emph{Archiv der
  Mathematik}, vol.~89, no.~5, pp. 399--403, 2007.

\bibitem{sbsos}
T.~Weisser, J.~B. Lasserre, and K.-C. Toh, ``Sparse-bsos: a bounded degree sos
  hierarchy for large scale polynomial optimization with sparsity,''
  \emph{Mathematical Programming Computation}, pp. 1--32, 2017.

\bibitem{Lasserre2009Moments}
\BIBentryALTinterwordspacing
J.~B. Lasserre, \emph{{Moments, Positive Polynomials and Their Applications}},
  ser. Imperial College Press optimization series.\hskip 1em plus 0.5em minus
  0.4em\relax Imperial College Press, 2009. [Online]. Available:
  \url{http://books.google.nl/books?id=VY6imTsdIrEC}
\BIBentrySTDinterwordspacing

\bibitem{mourrain2009}
B.~Mourrain and J.~P. Pavone, ``Subdivision methods for solving polynomial
  equations,'' \emph{J. Symb. Comput.}, vol.~44, no.~3, pp. 292--306, 2009.

\bibitem{smithThesis}
A.~P. Smith, ``Enclosure methods for systems of polynomial equations and
  inequalities,'' Ph.D. dissertation, 2012.

\bibitem{dreossiHSCC}
T.~Dreossi and T.~Dang, ``Parameter synthesis for polynomial biological
  models,'' in \emph{HSCC}, 2014.

\bibitem{munoz13}
C.~Mu{\~{n}}oz and A.~Narkawicz, ``Formalization of a representation of
  {B}ernstein polynomials and applications to global optimization,'' \emph{J.
  of Auto. Reason.}, vol.~51, no.~2, pp. 151--196, August 2013.

\bibitem{NGSM12}
A.~Narkawicz, J.~Garloff, A.~Smith, and C.~Mu{\~{n}}oz, ``{Bounding the Range
  of a Rational Function over a Box},'' \emph{Reliable Computing}, vol.~17, pp.
  34--39, 2012.

\bibitem{fluctuat}
D.~Delmas, E.~Goubault, S.~Putot, J.~Souyris, K.~Tekkal, and F.~Védrine,
  ``Towards an industrial use of fluctuat on safety-critical avionics
  software,'' in \emph{FMICS}, 2009.

\bibitem{gappa}
M.~Daumas and G.~Melquiond, ``{Certification of Bounds on Expressions Involving
  Rounded Operators},'' \emph{ACM Trans. Math. Softw.}, vol.~37, no.~1, pp.
  2:1--2:20, Jan. 2010.

\bibitem{hollight}
J.~Harrison, ``{HOL Light: A Tutorial Introduction},'' in \emph{FMCAD}, 1996.

\bibitem{CoqProofAssistant}
``{The Coq Proof Assistant},'' 2016, \url{http://coq.inria.fr/}.

\bibitem{arith24}
A.~Rocca, V.~Magron, and T.~Dang, ``{Certified Roundoff Error Bounds using
  Bernstein Expansions and Sparse Krivine-Stengle Representations},'' in
  \emph{{24th IEEE Symposium on Computer Arithmetic}}.\hskip 1em plus 0.5em
  minus 0.4em\relax IEEE, 2017.

\bibitem{deKlerk10Error}
E.~de~Klerk and M.~Laurent, ``{Error Bounds for Some Semidefinite Programming
  Approaches to Polynomial Minimization on the Hypercube},'' \emph{SIAM J. on
  Optimization}, vol.~20, no.~6, pp. 3104--3120, 2010.

\bibitem{IEEE754}
D.~Zuras, M.~Cowlishaw, A.~Aiken, M.~Applegate, D.~Bailey, S.~Bass,
  D.~Bhandarkar, M.~Bhat, D.~Bindel, S.~Boldo \emph{et~al.}, ``Ieee standard
  for floating-point arithmetic,'' \emph{IEEE Std 754-2008}, pp. 1--70, 2008.

\bibitem{berngarloff}
J.~Garloff, ``Convergent bounds for the range of multivariate polynomials,'' in
  \emph{Interval Mathematics 1985}.\hskip 1em plus 0.5em minus 0.4em\relax
  Springer, 1986, pp. 37--56.

\bibitem{krivineanneaux}
J.-L. Krivine, ``Anneaux pr{\'e}ordonn{\'e}s,'' \emph{Journal d'analyse
  math{\'e}matique}, vol.~12, no.~1, pp. 307--326, 1964.

\bibitem{stengle}
G.~Stengle, ``A nullstellensatz and a positivstellensatz in semialgebraic
  geometry,'' \emph{Mathematische Annalen}, vol. 207, no.~2, pp. 87--97, 1974.

\bibitem{bsos}
J.~B. Lasserre, K.-C. Toh, and S.~Yang, ``A bounded degree sos hierarchy for
  polynomial optimization,'' \emph{EURO J. on Comput. Opt.}, pp. 1--31, 2015.

\bibitem{NN94}
Y.~Nesterov and A.~Nemirovski, \emph{{Interior Point Polynomial Methods in
  Convex Programming: Theory and Applications}}.\hskip 1em plus 0.5em minus
  0.4em\relax Philadelphia: {Society for Industrial and Applied Mathematics},
  1994.

\bibitem{ginac}
C.~Bauer, A.~Frink, and R.~Kreckel, ``Introduction to the ginac framework for
  symbolic computation within the c++ programming language,'' \emph{J. Symb.
  Comput.}, vol.~33, no.~1, pp. 1--12, Jan. 2002.

\bibitem{cplex}
{ILOG, Inc}, ``{ILOG CPLEX}: High-performance software for mathematical
  programming and optimization,'' 2006.

\bibitem{lp_compare}
``Decision tree for optimization software,''
  \url{http://plato.la.asu.edu/bench.html}, accessed: 2016-10-18.

\bibitem{YALMIP}
J.~L{\"o}fberg, ``Yalmip : A toolbox for modeling and optimization in
  {MATLAB},'' in \emph{CACSD}, 2004.

\bibitem{SolovyevH13}
A.~Solovyev and T.~C. Hales, ``Formal verification of nonlinear inequalities
  with taylor interval approximations,'' in \emph{NFM 2013}.

\bibitem{Mantzaflaris2010}
A.~Mantzaflaris and B.~Mourrain, \emph{A Subdivision Approach to Planar
  Semi-algebraic Sets}.\hskip 1em plus 0.5em minus 0.4em\relax Springer Berlin
  Heidelberg, 2010, pp. 104--123.

\end{thebibliography}

\appendices
\section{Program Benchmarks}
\label{appendix}
\begin{itemize} 
	\item \new{rigibody1 : $(x_1,x_2,x_3) \mapsto -x_1x_2-2x_2x_3-x_1-x_3$ defined on $[-15,15]^3$.}
	\item \new{rigibody2 : $(x_1,x_2,x_3) \mapsto 2x_1x_2x_3+6x_3^2-x_2^2x_1x_3-x_2$ defined on $[-15,15]^3$.}
	\item \new{kepler0 : $(x_1,x_2,x_3,x_4,x_5,x_6) \mapsto x_2x_5+x_3x_6-x_2x_3-x_5x_6+x_1(-x_1+x_2+x_3-x_4+x_5+x_6)$ defined on $[4,6.36]^6$.}
	\item \new{kepler1 : $(x_1,x_2,x_3,x_4) \mapsto x_1x_4(-x_1+x_2+x_3-x_4)+x_2(x_1-x_2+x_3+x_4)+x_3(x_1+x_2-x_3+x_4)-x_2x_3x_4-x_1x_3-x_1x_2-x_4$ defined on $[4,6.36]^4$.}
	\item \new{kepler2 : $(x_1,x_2,x_3,x_4,x_5,x_6) \mapsto x_1x_4(-x_1+x_2+x_3-x_4+x_5+x_6)+x_2x_5(x_1-x_2+x_3+x_4-x_5+x_6)+x_3x_6(x_1+x_2-x_3+x_4+x_5-x_6)-x_2x_3x_4-x_1x_3x_5-x_1x_2x_6-x_4x_5x_6$ defined on $[4,6.36]^6$.}
	\item \new{sineTaylor : $x \mapsto x-\frac{x^3}{6.0}+\frac{x^5}{120.0}-\frac{x^7}{5040.0}$ defined on $[-1.57079632679,1.57079632679]$.}
	\item \new{sineOrder3 : $x \mapsto 0.954929658551372 x-0.12900613773279798 x^3$ defined on $[-2,2]$.}
	\item \new{sqroot : $x \mapsto 1.0+0.5x-0.125x^2+0.0625x^3-0.0390625x^4$ defined on $[0,1]$.}
	\item \new{himmilbeau : $(x_1,x_2) \mapsto (x_1^2+x_2-11)^2+(x_1+x_2^2-7)^2$ defined on $[-5,5]^2$.}
	\item \new{schwefel : $(x_1,x_2,x_3) \mapsto (x_1-x_2)^2+(x_2-1)^2+(x_1-x_3^2)^2+(x_3-1)^2$ defined on $[-10,10]^3$.}
	\item \new{magnetism : $(x_1,x_2,x_3,x_4,x_5,x_6,x_7) \mapsto x_1^2+2 x_2^2+2 x_3^2+2 x_4^2+2 x_5^2+2 x_6^2+2 x_7^2-x_1$ defined on $[-1,1]^7$.}
	\item \new{caprasse : $(x_1,x_2,x_3,x_4) \mapsto x_1 x_3^3+4 x_2 x_3^2 x_4+4 x_1 x_3 x_4^2+2 x_2 x_4^3+4 x_1 x_3+4 x_3^2-10 x_2 x_4-10 x_4^2+2$ defined on $[-0.5,0.5]^4$.}
	\item \new{doppler1 : $(x_1,x_2,x_3) \mapsto -t_1x_2/((t_1+x_1)(t_1+x_1))$ defined on $[-100,100]\times[20,20000]\times[-30,50]$, with $t_1 = 331.+0.6x_3$.}
	\item \new{doppler2 : $(x_1,x_2,x_3) \mapsto -t_1x_2/((t_1+x_1)(t_1+x_1))$ defined on $[-125,125]\times[15,25000]\times[-40,60]$, with $t_1 = 331.+0.6x_3$.}
	\item \new{doppler3 : $(x_1,x_2,x_3) \mapsto -t_1x_2/((t_1+x_1)(t_1+x_1))$ defined on $[-300,120]\times[320,20300]\times[-50,30]$, with $t_1 = 331.+0.6x_3$.}
	\item \new{verhulst : $x \mapsto 4x/(1+\frac{100}{111}\,x)$ defined on $[0.1,0.3]$.}
	\item \new{carbonGas : $x \mapsto (p+a(n/x)^2)(x-n\,b)-1.3806503\text{e-23}\,n\,t$ defined on $[0.1,0.5]$, with $p=3.5\text{e-7};a=0.401;b=42.7\text{e-7};t=300;n=1000$.}
	\item \new{predPrey : $x \mapsto 4xx/(1+(\frac{100}{111}\,x)^2)$ defined on $[0.1,0.3]$.}
	\item \new{turbine1 : $(x_1,x_2,x_3) \mapsto (3+2/(x_3x_3)-0.125(3-2x_1)(x_2x_2x_3x_3)/(1-x_1) -4.5)$ defined on $[-4.5,-0.3]\times[0.4,0.9]\times[3.8,7.8]$.}
	\item \new{turbine2 : $(x_1,x_2,x_3) \mapsto 6x_1 - 0.5x_1(x_2x_2x_3x_3)/(1-x_1)-2.5$ defined on $[-4.5,-0.3]\times[0.4,0.9]\times[3.8,7.8]$.}
	\item \new{turbine3 : $(x_1,x_2,x_3) \mapsto 3-2/(x_3x_3)-0.125(1+2x_1)(x_2x_2x_3x_3)/(1-x_1) - 0.5$ defined on $[-4.5,-0.3]\times[0.4,0.9]\times[3.8,7.8]$.}
	\item \new{jet : $(x_1,x_2) \mapsto x_1+((2x_1((3x_1x_1+2x_2-x_1)/(x_1x_1+1))((3x_1x_1+2x_2-x_1)/(x_1x_1+1)-3)+
		x_1x_1(4((3x_1x_1+2x_2-x_1)/(x_1x_1+1))-6))(x_1x_1+1)+3x_1x_1((3x_1x_1+2x_2-x_1)/(x_1x_1+1))+x_1x_1x_1+x_1+3((3x_1x_1+2x_2-x_1)/(x_1x_1+1)))$ defined on $[-5,5]\times[-20,20]$.}
	\item \new{floudas2-6 : $(x_1,x_2,x_3,x_4,x_5,x_6,x_7,x_8,x_9,x_10) \mapsto 48x_1+42x_2+48x_3+45x_4+44x_5+41x_6+57x_7+42x_8+45x_9+46x_{10}-50(x_1x_1+x_2x_2+x_3x_3+x_4x_4+x_5x_5+x_6x_6+x_7x_7+x_8x_8 + x_9x_9+x_{10}x_{10}$ defined on $[0,1]^{10}$ and the constraints set:\\
		$\{0\leq (-4+2x_1+6x_2+1x_3+0x_4+3x_5+3x_6+2x_7+6x_8+2x_9+2x_{10});$\\
        $0\leq 22-(6x_1-5x_2+8x_3-3x_4+0x_5+1x_6+3x_7+8x_8+9x_9-3x_{10});$\\
        $0\leq -6-(5x_1+6x_2+5x_3+3x_4+8x_5-8x_6+9x_7+2x8_+0x_9-9x_{10});$\\
        $0\leq -23-(9x_1+5x_2+0x_3-9x_4+1x_5-8x_6+3x_7-9x_8-9x_9-3x_{10});$\\
        $0\leq -12-(-8x_1+7x_2-4x_3-5x_4-9x_5+1x_6-7x_7-1x_8+3x_9-2x_{10})\}$ }
    \item \new{floudas3-3 : $(x_1,x_2,x_3,x_4,x_5,x_6) \mapsto -25(x_1-2)^2-(x_2-2)^2-(x_3-1)^2-(x_4-4)^2-(x_5-1)^2-(x_6-4)^2$ defined on $[0,6]^2\times[1,5]\times[0,6]\times[1,5]\times[0,10]$ and the constraints set:\\
		$\{0\leq ((x_3-3)^2+x_4-4);$\\
        $0\leq ((x_5-3)^2+x_6-4);$\\
        $0\leq (2-x_1+3x_2);$\\
        $0\leq (2+x1-x_2);$\\
        $0\leq (6-x_1-x_2);$\\
        $0\leq (x_1+x_2-2)\}$ }
    \item \new{floudas3-4 : $(x_1,x_2,x_3) \mapsto -2x_1+x_2-x_3$ defined on $[0,2]^2\times[0,3]$ and the constraints set:\\
		$\{0\leq (4-x_1-x_2-x_3);$\\
        $0\leq (6-3x_2-x_3);$\\
        $0\leq (-0.75+2x_1-2x_3+4x_1x_1-4x_1x_2+4x_1x_3+2x_2x_2-2x_2x_3+2x_3x_3) \}$ }
    \item \new{floudas4-6 : $(x_1,x_2) \mapsto -x_1-x_2$ defined on $[0,3]\times[0,4]$ and the constraints set:\\
		$\{0\leq (2x_1^4-8x_1^3+8x_1x1-x_2);$\\
		$0\leq (4x_1^4-32x_1^3+88x_1x_1-96x_1+36-x_2)\}$ }
    \item \new{floudas4-7 : $(x_1,x_2) \mapsto -12x_1-7x_2+x_2x_2$ defined on $[0,2]\times[0,3]$ and the constraints set: $\{0\leq (-2x_1^4+2-x_2)\}$ }
\end{itemize} 

\end{document}